\documentclass[11pt]{amsart}


\usepackage{subcaption}
\captionsetup[subfigure]{labelfont=rm,labelformat=parens}
\usepackage{subfiles}
\usepackage{comment}
\usepackage{float}
\usepackage{marginnote}
\usepackage{tabu}
\usepackage{euscript}
\usepackage{mathdots}
\usepackage[dvipsnames]{xcolor}
\usepackage{graphicx}			
\usepackage{amssymb}
\usepackage{mathrsfs}
\usepackage{amsmath}
\usepackage{amsthm}
\usepackage{stmaryrd}
\usepackage{tikz}
\usepackage{tikz-cd}
\usepackage{accents}
\usepackage{upgreek}
\usepackage{enumerate}
\usepackage{bm}
\usepackage{mathtools}
\usetikzlibrary{patterns}
\usepackage[all]{xy}
\usepackage{caption}
\usepackage{url}
\usepackage{float}
\usepackage{todonotes} 
\usepackage{colonequals}
\usepackage{bbm}
\usepackage{longtable}
\usepackage[full]{textcomp}
\usepackage[cal=cm]{mathalfa}
\usepackage{xparse}
\usepackage{comment}
\usepackage{cite}
\usepackage{enumitem}



\makeatletter
\DeclareRobustCommand{\cev}[1]{%
  \mathpalette\do@cev{#1}%
}
\newcommand{\do@cev}[2]{%
  \fix@cev{#1}{+}%
  \reflectbox{$\m@th#1\vec{\reflectbox{$\fix@cev{#1}{-}\m@th#1#2\fix@cev{#1}{+}$}}$}%
  \fix@cev{#1}{-}%
}
\newcommand{\fix@cev}[2]{%
  \ifx#1\displaystyle
    \mkern#23mu
  \else
    \ifx#1\textstyle
      \mkern#23mu
    \else
      \ifx#1\scriptstyle
        \mkern#22mu
      \else
        \mkern#22mu
      \fi
    \fi
  \fi
}

\usetikzlibrary{calc}
\usetikzlibrary{fadings}
\usetikzlibrary{decorations.pathmorphing}
\usetikzlibrary{decorations.pathreplacing}
\usepackage{tikz,tikz-cd,tikz-3dplot}
\usepackage{pgfplots}
\usetikzlibrary{arrows,shadows,positioning, calc, decorations.markings, 
	hobby,quotes,angles,decorations.pathreplacing,intersections,shapes}
\usepgflibrary{shapes.geometric}
\usetikzlibrary{fillbetween,backgrounds}




\linespread{1}


\setlength{\parskip}{4pt}

\usepackage[margin=0.9in]{geometry}

\setcounter{tocdepth}{1}
\setcounter{secnumdepth}{5}

\tikzset{
	commutative diagrams/.cd, 
	arrow style=tikz, 
	diagrams={>=stealth}
}
\tikzset{
	arrow/.pic={\path[tips,every arrow/.try,->,>=#1] (0,0) -- +(0,4pt);},
	pics/arrow/.default={triangle 90}
}
\tikzset{->-/.style={decoration={
			markings,
			mark=at position .6 with {\arrow{latex}}},postaction={decorate}}
}
\tikzset{
	c/.style={every coordinate/.try}
}



\theoremstyle{theorem}

\newenvironment{customthm}[1]
{\innercustomthm}
{\endinnercustomthm}

\theoremstyle{theorem}

\theoremstyle{theorem}

\makeatletter
\def\@tocline#1#2#3#4#5#6#7{\relax
	\ifnum #1>\c@tocdepth 
	\else
	\par \addpenalty\@secpenalty\addvspace{#2}%
	\begingroup \hyphenpenalty\@M
	\@ifempty{#4}{%
		\@tempdima\csname r@tocindent\number#1\endcsname\relax
	}{%
		\@tempdima#4\relax
	}%
	\parindent\z@ \leftskip#3\relax \advance\leftskip\@tempdima\relax
	\rightskip\@pnumwidth plus4em \parfillskip-\@pnumwidth
	#5\leavevmode\hskip-\@tempdima
	\ifcase #1
	\or\or \hskip 1em \or \hskip 2em \else \hskip 3em \fi%
	#6\nobreak\relax
	\dotfill\hbox to\@pnumwidth{\@tocpagenum{#7}}\par
	\nobreak
	\endgroup
	\fi}
\makeatother

\newcounter{marginnote}
\setcounter{marginnote}{0}

\DeclareMathAlphabet{\mathpzc}{OT1}{pzc}{m}{it}

\usepackage[backref=page]{hyperref}
\hypersetup{
  colorlinks   = true,          
  urlcolor     = PineGreen,          
  linkcolor    = PineGreen,          
  citecolor   = PineGreen             
}

\usepackage{cleveref}

\theoremstyle{theorem}
\newtheorem{theorem}{Theorem}[section]

\newtheorem{lemma}[theorem]{Lemma}
\newtheorem{proposition}[theorem]{Proposition}
\newtheorem*{problem*}{Problem}

\theoremstyle{definition}

\newtheorem{remark}[theorem]{Remark}

\newtheorem*{runningexample*}{Running example}

\newtheorem*{aside*}{Aside}

\newtheorem{construction}[theorem]{Construction}

\newtheorem{definition}[theorem]{Definition}
\newtheorem{example}[theorem]{Example}

\newtheorem{notation}[theorem]{Notation}
\newtheorem{proposition-definition}[theorem]{Proposition-Definition}

\newcommand{\xdashleftrightarrow}[2][]{\ext@arrow 3359\leftrightarrowfill@@{#1}{#2}}

\DeclareMathOperator{\ev}{ev}

\newcommand{\tw}{\mathsf{tw}}

\newcommand{\Tsf}{\mathsf{T}}
\newcommand{\TropMap}{\Trop_\Lambda(\Rplus^k)}

\newcommand{\MTropOrb}{\mathsf{M}^{\mathsf{tw}}}
\newcommand{\MTrop}{\mathsf{M}}

\newcommand{\Msf}{\mathsf{M}}


\newcommand{\Rplus}{\RR_{\geqslant 0}}

\newcommand{\RR}{\mathbb{R}}
\newcommand{\EE}{\mathbf{E}}

\newcommand{\Vcal}{\mathcal{V}}

\newcommand{\sqC}{\scalebox{0.8}[1.2]{$\sqsubset$}}

\newcommand{\Trop}{\mathsf{Trop}}

\newcommand{\Gm}{\mathbb{G}_{\operatorname{m}}}

\newcommand{\ol}[1]{\overline{#1}}

\newcommand{\bcd}{\begin{center}\begin{tikzcd}}
		\newcommand{\ecd}{\end{tikzcd}\end{center}}
\newcommand{\Tlog}{T}

\newcommand{\Aaff}{\mathbb{A}}
\newcommand{\Aone}{\mathbb{A}^{\! 1}}

\newcommand{\PP}{\mathbb{P}}
\newcommand{\OO}{\mathcal{O}}
\newcommand{\N}{\mathbb{N}}
\newcommand{\Z}{\mathbb{Z}}

\newcommand{\refined}{\mathsf{ref}}

\newcommand{\Punct}{\mathsf{Punct}}
\newcommand{\Orb}{\mathsf{Orb}}
\newcommand{\PunctOrb}{\mathsf{PunctOrb}}
\newcommand{\NPunct}{\mathsf{NPunct}}
\newcommand{\NOrb}{\mathsf{NOrb}}
\newcommand{\NPunctOrb}{\mathsf{NPunctOrb}}
\newcommand{\Log}{\mathsf{Log}}
\newcommand{\NLog}{\mathsf{NLog}}

\newcommand{\R}{\mathbb{R}}

\newcommand{\virt}{\mathsf{vir}}

\newcommand{\kfield}{\Bbbk}

\newcommand{\Lcal}{\mathcal{L}}

\newcommand{\Acal}{\mathcal{A}}
\newcommand{\Dcal}{\mathcal{D}}
\newcommand{\NAcal}{\mathcal{N\!A}}
\newcommand{\NVcal}{\mathcal{NV}}

\newcommand{\Bcal}{\mathcal{B}}

\newcommand{\Ccal}{\mathcal{C}}
\newcommand{\Mfrak}{\mathfrak{M}}

\newcommand{\Rder}{\mathbf{R}^\bullet}

\newcommand{\Kup}{\mathsf{K}}

\newcommand{\ptrop}{\mathsf{p}}

\makeatletter
\let\save@mathaccent\mathaccent
\newcommand*\if@single[3]{%
  \setbox0\hbox{${\mathaccent"0362{#1}}^H$}%
  \setbox2\hbox{${\mathaccent"0362{\kern0pt#1}}^H$}%
  \ifdim\ht0=\ht2 #3\else #2\fi
  }
\newcommand*\rel@kern[1]{\kern#1\dimexpr\macc@kerna}
\newcommand*\widebar[1]{\@ifnextchar^{{\wide@bar{#1}{0}}}{\wide@bar{#1}{1}}}
\newcommand*\wide@bar[2]{\if@single{#1}{\wide@bar@{#1}{#2}{1}}{\wide@bar@{#1}{#2}{2}}}
\newcommand*\wide@bar@[3]{%
  \begingroup
  \def\mathaccent##1##2{%
    \let\mathaccent\save@mathaccent
    \if#32 \let\macc@nucleus\first@char \fi
    \setbox\z@\hbox{$\macc@style{\macc@nucleus}_{}$}%
    \setbox\tw@\hbox{$\macc@style{\macc@nucleus}{}_{}$}%
    \dimen@\wd\tw@
    \advance\dimen@-\wd\z@
    \divide\dimen@ 3
    \@tempdima\wd\tw@
    \advance\@tempdima-\scriptspace
    \divide\@tempdima 10
    \advance\dimen@-\@tempdima
    \ifdim\dimen@>\z@ \dimen@0pt\fi
    \rel@kern{0.6}\kern-\dimen@
    \if#31
      \overline{\rel@kern{-0.6}\kern\dimen@\macc@nucleus\rel@kern{0.4}\kern\dimen@}%
      \advance\dimen@0.4\dimexpr\macc@kerna
      \let\final@kern#2%
      \ifdim\dimen@<\z@ \let\final@kern1\fi
      \if\final@kern1 \kern-\dimen@\fi
    \else
      \overline{\rel@kern{-0.6}\kern\dimen@#1}%
    \fi
  }%
  \macc@depth\@ne
  \let\math@bgroup\@empty \let\math@egroup\macc@set@skewchar
  \mathsurround\z@ \frozen@everymath{\mathgroup\macc@group\relax}%
  \macc@set@skewchar\relax
  \let\mathaccentV\macc@nested@a
  \if#31
    \macc@nested@a\relax111{#1}%
  \else
    \def\gobble@till@marker##1\endmarker{}%
    \futurelet\first@char\gobble@till@marker#1\endmarker
    \ifcat\noexpand\first@char A\else
      \def\first@char{}%
    \fi
    \macc@nested@a\relax111{\first@char}%
  \fi
  \endgroup
}
\makeatother

\newcommand{\Mbar}{\widebar{M}}

\newcommand{\FF}{\mathbb{F}}

\newcommand{\Spec}{\operatorname{Spec}}
\newcommand{\acts}{\curvearrowright}

\newcommand{\ftrop}{\mathsf{f}}


\crefname{equation}{eq.}{eqs.}
\crefname{eqnarray}{eq.}{eqs.}
\crefname{conjecture}{conjecture}{conjectures}
\crefname{lemma}{lemma}{lemmas}
\crefname{theorem}{theorem}{theorems}
\crefname{claim}{claim}{claims}
\crefname{remark}{remark}{remarks}
\crefname{proposition}{proposition}{propositions}
\crefname{section}{section}{sections}
\crefname{appendix}{appendix}{appendices}
\crefname{corollary}{corollary}{corollaries}
\crefname{figure}{figure}{figures}
\crefname{table}{table}{tables}
\crefname{example}{example}{examples}
\crefname{assumption}{assumption}{assumptions}
\crefname{definition}{definition}{definitions}
\crefname{innercustomthm}{theorem}{theorems}
\crefname{innercustomconj}{conjecture}{conjectures}

\setlist[enumerate,1]{label=(\roman*),itemsep=0.9ex}
\setlist[itemize]{itemsep=0.9ex}


\begin{document}

\title[Negative contacts]{Logarithmic negative tangency and root stacks}
\author{Luca Battistella, Navid Nabijou, Dhruv Ranganathan}

\begin{abstract}
We study stable maps to normal crossings pairs with possibly negative tangency orders. There are two independent models: punctured Gromov--Witten theory of pairs and orbifold Gromov--Witten theory of root stacks with extremal ages. Exploiting the tropical structure of the punctured mapping space, we define and study a new virtual class for the punctured theory. This arises as a refined intersection product on the Artin fan, and produces a distinguished sector of punctured Gromov--Witten invariants. Restricting to genus zero, we show that these invariants coincide with the orbifold invariants, first for smooth pairs, and then for normal crossings pairs after passing to a sufficiently refined blowup. This builds on previous work to provide a complete picture of the logarithmic-orbifold comparison in genus zero, which is compatible with splitting and thus allows for the wholesale importation of orbifold techniques, including boundary recursion and torus localisation. Contemporaneous work of Johnston uses the comparison to give a new proof of the associativity of the Gross--Siebert intrinsic mirror ring.\vspace{-0.5cm}
\end{abstract}

\maketitle
\tableofcontents

\, \vspace{-1cm}
\section*{Introduction} \label{sec: introduction}
\subsection{Outline} The subject of this paper is the enumerative geometry of maps
\begin{equation} \label{eqn: map curve to target introduction}
(C|p_1+\cdots+p_n)\to (X|D = D_1+\cdots+D_k)
\end{equation}
from a rational curve $C$ with specified, possibly negative, tangency orders
\[ \upalpha_{ij} \in \Z \]
at each point $p_i$ with respect to each divisor $D_j$. Rational curves with negative tangency have been the subject of significant recent interest due to their role in quantum cohomology and mirror symmetry~\cite{PuncturedMaps,GS19,GrossSiebertIntrinsic,FWY20}. There are two distinct models:
\begin{itemize}
\item \textbf{The punctured path.} The map is enhanced to a punctured logarithmic map. Locally near a point $p_i$ with negative tangency $\upalpha_{ij} < 0$, the curve must lift to the total space of the normal bundle of $D_j$ and meet the zero section at $p_i$ with tangency $-\upalpha_{ij}$. The lifting condition can be efficiently phrased in terms of logarithmic structures. This theory was developed by Abramovich--Chen--Gross--Siebert~\cite{AbramovichChenLog,ChenLog,GrossSiebertLog,PuncturedMaps}.
\item \textbf{The root route.} The target $X$ is enhanced to an orbifold, endowing each $D_j$ with a root of order $r_j$. Tangency is imposed by enhancing $C$ to an orbifold and imposing that the age of $p_i$ with respect to $D_j$ is $\upalpha_{ij}/r_j$ (mod $1$). This approach was pioneered by Cadman \cite{Cadman} and developed in stages by numerous authors \cite{CadmanChen,AbramovichCadmanWise,TsengYouHigherGenus,TsengYouSNC}. In the presence of negative tangency, the theory was developed by Fan--Wu--You~\cite{FWY20,FWY21}.
\end{itemize}
Negative tangencies are an inescapable feature of both theories. Given a logarithmic or orbifold map \eqref{eqn: map curve to target introduction} with positive tangencies, restricting to a subcurve of $C$ will in general produce a map with negative tangencies. This restriction operation is crucial in applying standard techniques in Gromov--Witten theory, such as boundary recursion and torus localisation.

The two theories have complementary geometric features, and are amenable to different techniques. The logarithmic theory benefits from close links to tropical curve counting and scattering diagrams, whereas the orbifold theory enjoys the standard arsenal of techniques in Gromov--Witten theory, including boundary recursion and torus localisation. This counterpoint has prompted significant interest in a comparison incorporating negative tangencies, extending and completing earlier work in the positive tangency setting \cite{AbramovichCadmanWise,TsengYouHigherGenus,BNR2}.

The main contributions of the present paper are as follows:
\begin{enumerate}
\item \textbf{Refined punctured theory.} We introduce a new virtual fundamental class on the space of punctured logarithmic maps, called the refined virtual class. This is defined in all genera. It is a global class of pure dimension, obtained without restricting to a particular stratum of the moduli space. It can be calculated in terms of strata and tautological classes. 
\item \textbf{Punctured-orbifold comparison.} We prove that for maps from rational curves to smooth pairs, the refined punctured theory coincides with the orbifold theory. Employing the rank reduction bootstrap, we extend this comparison result to normal crossings pairs, after passing to a sufficiently refined blowup of the target pair, depending only on the numerical data.
\end{enumerate}

The refined punctured theory arises naturally when pulling back strata classes from the moduli space of curves, and is independent of any ideas coming from orbifold geometry. It seems likely to find wider applications.

Our work builds on and connects to that of many others. The construction of the refined punctured theory gives a conceptual account of the classes introduced by Fan--Wu--You for smooth pairs via localisation residues~\cite{FWY20}. Our conceptual definition applies to all simple normal crossings pairs, providing a generalisation which is not accessible via the localisation approach. In genus zero the two definitions coincide by comparing both to the orbifold theory. Based on basic examples and the ingredients that go into the formula that defines the higher genus theory of~\cite{FWY21} we would guess this is also true in higher genus, but we do not pursue this here. 

As mentioned above, the punctured-orbifold comparison permits the interlacing of logarithmic and orbifold techniques. This sets it apart from results comparing different models \emph{within} logarithmic Gromov--Witten theory, which are significant foundationally but rarely provide access to new techniques \cite{AbramovichMarcusWise,RangExpansions}. Applications have already been found. The positive tangency correspondence has been used to establish a reconstruction principle for genus zero logarithmic Gromov--Witten invariants \cite[Corollary~Y]{BNR2}. Contemporaneous work of Johnston \cite{JohnstonFrobenius} uses the punctured-orbifold comparison of the present paper to deduce associativity of the intrinsic mirror ring, a landmark result in the Gross--Siebert program~\cite{GS19}. 

Another application is to torus localisation. Starting with a logarithmic invariant, we convert it to an orbifold invariant and apply localisation, thus obtaining an in-principle localisation schema in genus zero. This provides a structural and calculational yardstick for the pursuit of localisation entirely within logarithmic Gromov--Witten theory. Moreover, the isomorphism $\upomega$ established in \Cref{thm: main smooth pairs introduction} endows (for the first time) the space of stable logarithmic maps with an absolute perfect obstruction theory. The lack of an absolute perfect obstruction theory has been a fundamental obstacle to implementing torus localisation in the logarithmic setting \cite{GraberMFOReport}. With this new obstruction theory, the space carries all of the formal properties required for applying the results of \cite{GraberPandharipande}.

\subsection{The key construction: the refined punctured theory} \label{sec: refined punctured theory introduction} The moduli space of punctured maps differs fundamentally from its positive tangency counterpart. Fix a pair $(X|D)$, and let $\Sigma=\Sigma(X|D)$ denote the tropicalisation and $\Acal(\Sigma)$ its Artin fan. This is a locally toric stack equipped with a boundary divisor, and is the universal pair whose boundary complex is identified with that of $(X|D)$.

Given numerical data $\Lambda$, there is a morphism of moduli stacks
\begin{equation} \label{eqn: map to maps to universal target}
 \Punct_\Lambda(X|D) \xrightarrow{\upmu} \Punct_\Lambda(\Acal(\Sigma)|\partial \Acal(\Sigma))
 \end{equation}
 equipped with a relative perfect obstruction theory. The key difference with positive tangency is that the base of this obstruction theory is typically reducible, and its connected components can fail to be equidimensional. In \cite{PuncturedMaps} the punctured theory is defined by taking virtual pullback of \emph{any} Chow class from the base of \eqref{eqn: map to maps to universal target}. Usually this is the class of an irreducible component, or more generally a stratum, and such classes admit an explicit tropical enumeration.

While $\Punct_\Lambda(\Acal(\Sigma)|\partial \Acal(\Sigma))$ may be singular, it is always \emph{idealised} logarithmically smooth, meaning that it is locally isomorphic to a monomial subscheme of a toric variety (in keeping with universality results such as~\cite{Universality,Vak06}, one expects arbitrary monomial subschemes to arise). This gives
\[
\begin{tikzcd}
\Punct_\Lambda(\Acal(\Sigma)|\partial \Acal(\Sigma)) \ar[d,"\upnu"] & \\
\Vcal(\Tsf) \ar[r,hook] & \Acal(\Tsf)
\end{tikzcd}
\]
where $\Acal(\Tsf)$ is the Artin fan of the space of punctured maps $\Punct_\Lambda(\Acal(\Sigma)|\partial \Acal(\Sigma))$ and $\Vcal(\Tsf) \hookrightarrow \Acal(\Tsf)$ is the inclusion of a monomial substack (the \textbf{puncturing substack}). The morphism $\upnu$ is smooth and encodes the stratification of the punctured mapping space. The irreducible components of $\Vcal(\Tsf)$ biject with those of $\Punct_\Lambda(\Acal(\Sigma)|\partial \Acal(\Sigma))$.

We now explain the failure of equidimensionality in conceptual terms. This will lead naturally to the definition of the refined punctured theory. Given a marking $p_i$ and a divisor component $D_j$ such that $\upalpha_{ij} < 0$, the irreducible component of $C$ containing $p_i$ must map entirely inside $D_j$. This imposes $k_P$ conditions on the moduli space, where $k_P$ is the \textbf{total puncturing rank} recording the total number of negative directions of tangency at the punctures (see \Cref{def: puncturing rank}). However, these $k_P$ conditions are not independent: for example, when two markings lie on the same irreducible component of $C$, we obtain the same condition twice.

These observations (and their tropical counterparts) explain why $\Vcal(\Tsf) \hookrightarrow \Acal(\Tsf)$ fails to be a regular embedding, and hence why $\Vcal(\Tsf)$ and $\Punct_\Lambda(\Acal(\Sigma)|\partial \Acal(\Sigma))$ fail to be equidimensional. On the other hand, we can express these $k_P$ conditions via the following cartesian square:
\[
\begin{tikzcd}
\Vcal(\Tsf) \ar[r,hook] \ar[d] \ar[rd,phantom,"\square"] & \Acal(\Tsf) \ar[d] \\
\Bcal \Gm^{k_P} \ar[r,hook,"\upiota"] & \Acal^{k_P}
\end{tikzcd}
\] 
where $\Acal=[\Aone/\Gm]$. This describes $\Vcal(\Tsf) \hookrightarrow \Acal(\Tsf)$ as the pullback of a regular embedding, or equivalently as the intersection of $k_P$ Cartier divisors inside $\Acal(\Tsf)$. From this description we obtain a refined intersection class, which we denote:
\[ 
[\Vcal(\Tsf)]^{\refined} \colonequals \upiota^![\Acal(\Tsf)].
\]
Applying smooth and virtual pullback, we obtain \textbf{refined virtual classes} (\Cref{def: refined virtual class}):
\begin{align*}
[\Punct_\Lambda(\Acal(\Sigma)|\partial \Acal(\Sigma))]^{\refined} & \colonequals \upnu^\star [\Vcal(\Tsf)]^{\refined}, \\
[\Punct_\Lambda(X|D)]^{\refined} & \colonequals \upmu^! [\Punct_\Lambda(\Acal(\Sigma)|\partial \Acal(\Sigma))]^{\refined}.
\end{align*}
The latter class has pure dimension
\[
(\dim X-3)(1-g)-(K_X+D)\cdot \upbeta+n-k_P
\]
where $k_P$ is the total puncturing rank introduced above. We stress that the refined virtual class already differs from the fundamental class when $D$ is smooth, in which case $k_P$ is the number of punctures.

Building on this conceptual definition, we exploit intersection theory on the Artin fan to express the refined virtual class in terms of strata and tautological classes. Details and examples are given in \Cref{sec: Aluffi}. In the smooth pair case, the resulting formulae resemble the classes appearing in the graph sum definition of negative contact invariants given by Fan--Wu--You \cite[Section~3]{FWY21}. However our constructions are logically independent, and a direct comparison is complicated by subtle differences between the moduli spaces: logarithmic without expansions versus non-logarithmic with expansions. Nevertheless, \Cref{thm: main smooth pairs introduction} below combined with \cite[Theorem~3.13]{FWY21} shows indirectly that the associated invariants coincide, at least in genus zero.

\begin{remark}[Global nature of the refined punctured theory] The space $\Punct_\Lambda(\Acal(\Sigma)|\partial \Acal(\Sigma))$ has a stratification indexed by tropical types, with irreducible components corresponding to minimal tropical types. It is common to fix a single tropical type and focus solely on the corresponding stratum, see e.g. \cite{JohnstonBirational}. This avoids difficulties caused by the non-equidimensionality of the moduli space.

With the refined punctured theory, it is crucial to tackle such difficulties head-on. The refined virtual class is defined intrinsically on the entire punctured moduli space. It is not pieced together from classes defined on strata (although, as discussed above, such a description can be obtained a posteriori). Accordingly, our numerical data (\Cref{sec: numerical data}) isolate entire connected components of the punctured moduli space, in contrast with tropical types, which only isolate closed substacks.
\end{remark}

\subsection{Main results: punctured-orbifold comparisons} We now restrict to genus zero. Our main theorems equate the refined punctured invariants with the orbifold invariants, under appropriate hypotheses. The refined virtual class is strictly necessary here: otherwise, the theories may not even have the same virtual dimension. We first state the comparison for smooth pairs.

\begin{customthm}{A}[\Cref{thm: correspondence smooth pairs}] \label{thm: main smooth pairs introduction}
Let $(X|D)$ be a smooth pair. In genus zero, with fixed numerical data $\Lambda$, the refined punctured Gromov--Witten theory coincides with the orbifold Gromov--Witten theory up to a rooting factor. Precisely, fix $r$ sufficiently large and let $X_r$ denote the $r$th root stack of $X$ along $D$. Then there is a correspondence
\[
\begin{tikzcd}
& \PunctOrb_\Lambda(X_r|D_r) \ar[ld,"\upalpha"] \ar[rd,"\upomega" {yshift=-0.35cm,xshift=-0.3cm}] & \\
\Punct_\Lambda(X|D) & & \Orb_\Lambda(X_r).
\end{tikzcd}
\]
satisfying the following virtual pushforward identities
\begin{align*}
\upalpha_\star[\PunctOrb_\Lambda(X_r|D_r)]^{\refined} & = r^{-k_P} [\Punct_\Lambda(X|D)]^{\refined} \\[0.2cm]
\upomega_\star[\PunctOrb_\Lambda(X_r|D_r)]^{\refined} & = [\Orb_\Lambda(X_r)]^{\virt}.
\end{align*}
Moreover $\upalpha$ induces an isomorphism on coarse spaces, while $\upomega$ is an isomorphism.
\end{customthm}
\noindent We highlight two notable features. First, the comparison of spaces is significantly stronger than parallel results in the literature~\cite{AbramovichCadmanWise}. We do not simply prove that the spaces are birational or even virtually birational, but identify the spaces up to explicit root constructions\footnote{There is also a superficial difference between the comparison here and in~\cite{AbramovichCadmanWise}, arising from the fact that they use J. Li's expanded model for relative Gromov--Witten theory. In particular, it is noted there that the analogue of the map $\upomega$ is ``not an isomorphism even for large $r$''. This occurs because its domain carries the data of an expansion of the target. See~\cite{AbramovichMarcusWise} for further details on comparisons between models of logarithmic Gromov--Witten theory.}. Second, this stronger comparison of spaces, even at the level of universal targets, is actually insufficient to conclude the equality of invariants. As we have discussed, unlike the spaces of maps to universal targets appearing in~\cite{AbramovichWiseBirational,AbramovichCadmanWise,AbramovichMarcusWise}, the space $\Punct_\Lambda(\Acal(\Sigma)|\partial \Acal(\Sigma))$ carries a nontrivial virtual structure. A delicate comparison of obstruction theories is required even at the level of universal targets. See \Cref{sec: subtleties of negative tangency} for a detailed discussion.

The morphism $\upalpha$ is constructed explicitly in \Cref{def: chimera}. At the level of universal targets, it is the pullback of a root construction along a stratum inclusion. It therefore behaves like a root construction followed by a finite gerbe. This should be contrasted with the situation of positive tangency, where the analogous map is simply a root construction. This explains the rational factor in the pushforward identity. 

We now state the generalisation to simple normal crossings pairs. Given a pair $(X|D)$, an \textbf{iterated blowup} is a sequence of blowups of closed strata. Given numerical data $\Lambda$, our previous work \cite{BNR2} identified a condition on iterated blowups called \textbf{$\Lambda$-sensitivity} (see \Cref{sec: slope-sensitivity}). It is satisfied by all sufficiently refined iterated blowups. 

\begin{customthm}{B}[\Cref{thm: main}] \label{thm: main snc pairs introduction} Let $(X|D)$ be a simple normal crossings pair and fix numerical data $\Lambda$ for stable punctured maps in genus zero. For each $\Lambda$-sensitive blowup $(X^\prime|D^\prime) \to (X|D)$ the refined punctured Gromov--Witten theory of $(X^\prime|D^\prime)$ coincides with the orbifold Gromov--Witten theory of $X^\prime_{\bm{r}^\prime}$ up to a rooting factor. Precisely, there is a diagram of moduli spaces
\[
\begin{tikzcd}[column sep=small]
\Punct_{\Lambda^\prime}(X^\prime|D^\prime) \ar[rd,"\uptheta" below] && \NPunctOrb_{\Lambda^\prime}(X^\prime_{\bm{r}^\prime}|D^\prime_{\bm{r}^\prime}) \ar[ld,"\upalpha"] \ar[rd,"\upomega" below] && \Orb_{\Lambda^\prime}(X^\prime_{\bm{r}^\prime}) \ar[ld,"\upeta"] \\
& \NPunct_{\Lambda^\prime}(X^\prime|D^\prime) && \NOrb_{\Lambda^\prime}(X^\prime_{\bm{r}^\prime})	
\end{tikzcd}
\]
satisfying the following virtual pushforward identities
\begin{align*}
\uptheta_\star[\Punct_{\Lambda^\prime}(X^\prime|D^\prime)]^{\refined} & = [\NPunct_{\Lambda^\prime}(X^\prime|D^\prime)]^{\refined} \\
\upalpha_\star[\NPunctOrb_{\Lambda^\prime}(X^\prime_{\bm{r}^\prime}|D^\prime_{\bm{r}^\prime})]^{\refined} & = \Pi_{j=1}^{k^\prime} (r_{j}^\prime)^{-n_j^\prime} [\NPunct_{\Lambda^\prime}(X^\prime|D^\prime)]^{\refined} \\
\upomega_\star[\NPunctOrb_{\Lambda^\prime}(X^\prime_{\bm{r}^\prime}|D^\prime_{\bm{r}^\prime})]^{\refined} & = [\NOrb_{\Lambda^\prime}(X^\prime_{\bm{r}^\prime})]^{\virt} \\
\upeta_\star[\Orb_{\Lambda^\prime}(X^\prime_{\bm{r}^\prime})]^{\virt} & = [\NOrb_{\Lambda^\prime}(X^\prime_{\bm{r}^\prime})]^{\virt}.
\end{align*}
The rooting factor $\Pi_{j=1}^{k^\prime}(r_{j}^\prime)^{-n_j^\prime}$ is defined in \Cref{sec: main theorem}.
\end{customthm}

We restrict to \emph{simple} normal crossings pairs. Arbitrary normal crossings pairs are included by performing an initial sequence of blowups to remove self-intersections of the boundary divisors.

For smooth pairs there are no non-trivial blowups, and one recovers \Cref{thm: main smooth pairs introduction}. The maps $\uptheta$ and $\upeta$ are isomorphisms in this case. In line with our previous work for positive tangencies, the ``$\mathsf N$'' spaces in the diagram are ``naive'' versions of the simple normal crossings theory, constructed from the smooth pair theories $(X|D_j)$ using intersection products.

\subsection{Subtleties of negative tangency} \label{sec: subtleties of negative tangency} Except for the appearance of the refined virtual class, the above comparison results closely resemble previous comparison results for positive tangencies \cite{AbramovichCadmanWise,BNR2}. However the proofs are significantly more challenging, and we take a moment to explain the source of the complications. 

The standard approach to comparison theorems has been to use the fact that, while spaces of stable maps to a specific target can be badly-behaved, spaces of maps to a sufficiently universal target are irreducible. Typical universal targets include: a point, the stack $\Bcal \Gm$, and the stack $\Acal = [\Aone/\Gm]$. Thus, if two spaces of maps to the universal target are shown to be birational --- which is the case if they contain a common dense locus parametrising non-degenerate maps --- the general comparison will follow from a formal compatibility of obstruction theories. This proof technique has been used in numerous places~\cite{AbramovichWiseBirational,AbramovichCadmanWise,AbramovichMarcusWise,RangExpansions,BNR2}.

Focusing on the logarithmic-orbifold comparison, in positive tangency the spaces
\[ \Log_\Lambda(\Acal|\Dcal), \qquad \Orb_\Lambda(\Acal_r), \]
of logarithmic maps to the universal pair and twisted maps to the universal root stack are both irreducible, and are birational since they share a common dense locus parametrising non-degenerate maps. However, when negative tangencies are permitted, the corresponding spaces
\begin{equation} \label{eqn: spaces puncture orbifold} \Punct_\Lambda(\Acal|\Dcal), \qquad \Orb_\Lambda(\Acal_r), \end{equation}
are reducible and non-equidimensional (see \Cref{sec: refined punctured theory introduction}). Virtual structures enter already at the level of universal targets, and the previous strategy fundamentally breaks down. Even if one could match up the irreducible components via birational maps, if these components are not of the correct dimension this would have little consequence for the virtual class. 

Because of this, we require much finer control over the geometry of the moduli space than simply its birational type. A key step in the proof is to show that the spaces \eqref{eqn: spaces puncture orbifold} are \emph{isomorphic}, up to explicit root constructions. 

\subsection{Proof highlights} \Cref{sec: new theory of negative tangency} introduces the refined punctured theory and \Cref{sec: correspondence} states the punctured-orbifold comparison. The smooth pair comparison (\Cref{thm: main smooth pairs introduction}) is established in Sections~\ref{sec: twisted punctured moduli} and \ref{sec: proof smooth}. The general case (\Cref{thm: main snc pairs introduction}) is deduced from this special case in \Cref{sec: proof snc}. We draw attention to several novel aspects of the proof:

\begin{itemize}
\item \textbf{Constructing the chimera.} We construct a ``chimera'' moduli space $\PunctOrb_\Lambda(\Acal_r|\Dcal_r)$ interpolating between punctures and roots. We take an unusual approach: we first construct the tropical chimera (\Cref{sec: tropical twisted punctured maps}), which we then use to define the algebraic chimera as a fibre product over Artin fans (\Cref{def: chimera}). Finally, we manually verify the key properties of the algebraic chimera, such as the existence of a universal family (\Cref{sec: chimera}).
\item  \textbf{Strong comparison of moduli spaces.} We prove that the natural morphism
\[ \PunctOrb_\Lambda(\Acal_r|\Dcal_r) \to \Orb_\Lambda(\Acal_r) \]
is in fact an \emph{isomorphism} (\Cref{thm: spaces same}). This is required later when comparing obstruction theories. During the proof we construct a new logarithmic structure on $\Orb_\Lambda(\Acal_r)$, different from the logarithmic structure pulled back from the moduli space of twisted curves.
\item \textbf{Obstruction theories for universal targets.} The comparison of obstruction theories at the level of universal targets is far from formal (\Cref{thm: classes same}). It requires the use of ``positivised'' numerical data (\Cref{sec: positivised numerical data}) to obtain an embedding of the negative tangency moduli space into an auxiliary positive tangency moduli space (\Cref{rmk: negative inside positive}), followed by a careful identification of line bundles arising from logarithmic and orbifold structures (\Cref{sec: description of POTs}).
\end{itemize}

\subsection{Birational invariance} \label{sec: birational invariance introduction} 
\Cref{thm: main snc pairs introduction} identifies the refined punctured theory of a blowup with the orbifold theory of a blowup. This is a level by level comparison: it does not relate either theory to the respective theory of the original pair, nor to the respective theory of a further blowup.

The punctured theory does not satisfy the naive generalisation of birational invariance \cite{AbramovichWiseBirational}. Indeed, in the negative contact setting, even the virtual dimension can change under birational modification. This is expected: since punctured moduli appears in the boundary of logarithmic moduli, when the latter is replaced by a birational modification the former is replaced by a bundle (or a birational modification thereof). This behaviour is essentially unconstrained, and the strongest statements one can hope for in general are obtained in \cite{JohnstonBirational}.

The \emph{refined} punctured theory is more structured, and may satisfy stronger birational invariance properties. We investigate these in \Cref{sec: birational invariance}. We first show that the virtual dimension eventually stabilises under blowups, as long as the numerical data is lifted carefully (\Cref{sec: stabilisation of virtual dimension}). This is already subtle: careless lifts of the numerical data can change the virtual dimension.

In this stable range, it is at least reasonable to hope that the refined virtual class also stabilises. However we show via an explicit counterexample that this does not occur: the refined virtual class can change, even when both targets are $\Lambda$-sensitive (\Cref{sec: birational counterexample}). This reveals a surprising asymmetry in the orbifold theory: in the positive tangency setting the orbifold theory stabilises after a $\Lambda$-sensitive blowup by the results of~\cite{BNR2}, but the analogous result fails in the negative tangency setting. It also raises the following:

\begin{problem*} Let $(X^\prime|D^\prime)\to (X|D)$ be an iterated blowup. Find an explicit formula for the difference between the refined virtual classes associated to $(X^\prime|D^\prime)$ and $(X|D)$, after pushforward to the space of punctured maps to $(X|D)$. \end{problem*}

An inexplicit comparison between these classes comes out of the calculation scheme we present in Section~\ref{sec: Aluffi}, but we believe a comprehensive study to be worthwhile. One might hope for a characterisation of the difference between the two virtual classes in tropical moduli theoretic terms. The calculation scheme does give a sense for the ingredients that should go into an explicit formula: strata classes on the puncturing substack of the Artin fan of the space of punctured maps, and Chern classes of natural line bundles on this Artin fan. Each of these ingredients seems to have some tropical meaning when they arise.

\subsection{Prospects} With the results of the present paper, the comparison between the two approaches to negative tangency in genus zero is essentially complete. In addition to the questions surrounding birational invariance raised above, the remaining foundational questions centre on positive genus. Here, even for smooth pairs, the comparison between the refined punctured theory and the theory of Fan--Wu--You via double ramification cycles is incomplete~\cite{FWY21}. For simple normal crossings pairs, the comparison expected in light of our previous work~\cite[Conjecture~W]{BNR2} remains open. 

It would be interesting to apply our logarithmic-orbifold comparison to obtain complete calculations of logarithmic Gromov--Witten invariants, and to uncover hidden structures. For instance, the precise form of the translation of the orbifold WDVV relations is not known. The work of Johnston on mirror manifolds includes new calculations that may shed light on this \cite{JohnstonFrobenius}.

\subsection*{History} The refined punctured theory and the statement of the correspondence were discovered in April 2022. The ideas were subsequently communicated to the community via seminar talks and informal discussions, but the complete write-up has been significantly delayed. In the intervening period, the main result has found applications: see \cite{YouLG,JohnstonFrobenius}.

\subsection*{Acknowledgements} Parts of this work were conducted at the International Centre for Mathematical Sciences in Edinburgh supported by a Research-in-Groups grant. We are grateful to the centre and its staff for providing us with ideal working conditions. We have benefited from numerous conversations over the years with friends and colleagues concerning negative contact orders and orbifold Gromov--Witten theory, including Dan Abramovich, Dori Bejleri, Mark~Gross, Sam~Johnston, Jonathan~Wise, and Fenglong You. We extend special thanks to Honglu Fan, whose comments concerning the material in~\cite{FWY20} were particularly enlightening. We thank the anonymous referees for their valuable suggestions.

\subsection*{Conventions and notation.} We will work over an algebraic closed field of characteristic zero. All stacks will be locally of finite type. Logarithmic structures will be fine and saturated. Given a cone stack $\mathsf{T}$ we write $\Acal(\mathsf{T})$ for the corresponding Artin fan \cite{CavalieriChanUlirschWise}. We write $\Acal$ for the Artin fan $[\Aone/\Gm]$. For a positive natural number $n$ we write $[n]$ for the set $\{1,\ldots,n\}$.

\section{Refined punctured theory: construction and examples} \label{sec: new theory of negative tangency} 
 
\noindent Fix a simple normal crossings pair $(X|D)$. We choose a labelling of the irreducible components by the set $[k]$, and hence express $D=D_1+\cdots+D_k$. Write
\[ \Sigma \colonequals \Sigma(X|D) \]
for the tropicalisation. There is a natural map
\begin{equation} \label{eqn: global framing} c \colon \Sigma \to \Rplus^k \end{equation}
which is a finite cover of a subcomplex.\footnote{If every intersection of divisor components is connected, then $c$ is the inclusion of a subcomplex. The reader can assume this without significant loss of generality, but the more general context is useful in applications, see for instance~\cite{vanGarrelNabijouSchuler}.} The map sends the ray in $\Sigma$ corresponding to $D_j$ isomorphically onto the $j$th coordinate ray in $\Rplus^k$. The induced morphism of Artin fans $\Acal(\Sigma) \to \Acal^k$ is \'etale and strict. We make frequent use of this \textbf{global framing}, and the coordinate system induced by the map $c$.

\subsection{Numerical data for punctured maps} \label{sec: numerical data} We recall the basic setup for punctured Gromov--Witten theory, following the foundational paper of Abramovich--Chen--Gross--Siebert~\cite{PuncturedMaps}.

\begin{definition} \label{def: numerical data}
The \textbf{numerical data} $\Lambda$ for a moduli space of punctured maps to $(X|D)$ consists of:
\begin{itemize}
	\item \textbf{Genus.} A natural number $g \in \N$ fixing the genus of the source curve.
	\item \textbf{Curve class.} An effective curve class $\upbeta \in H_2(X;\Z)$.
	\item \textbf{Contact orders.} A number $n \in \N$ of markings, and, for every marking $i \in [n]$ and divisor component $j \in [k]$, a contact order
	\[ \upalpha_{ij} \in \Z.\]
\end{itemize}

Fix $i \in [n]$. If $\upalpha_{ij} \geqslant 0$ for all $j \in [k]$ we refer to $i$ as an \textbf{ordinary marking}. Otherwise we refer to it as a \textbf{puncture}. We decompose the markings as
\[ [n] = O \sqcup P \]
with $O$ the set of ordinary markings and $P$ the set of punctures.

For $j \in [k]$ we define the \textbf{degree} with respect to $j$ as $d_j \colonequals D_j \cdot \upbeta \in \Z$, and impose the following \textbf{global balancing constraint} on the numerical data:
\[ d_j = \Sigma_{i=1}^n \upalpha_{ij}. \]
Note that both $d_j$ and $\upalpha_{ij}$ may be negative.
\end{definition}

Fix once and for all numerical data $\Lambda$. The paper \cite{PuncturedMaps} constructs moduli stacks of punctured logarithmic maps:
\[ \Punct_\Lambda(X|D) \xrightarrow{\upeta} \Punct_\Lambda(\Acal(\Sigma)|\partial \Acal(\Sigma)) \xrightarrow{\uptheta} \Punct_\Lambda(\Acal^k|\partial \Acal^k).\]
In the first space, the underlying map is required to be stable in the usual sense. In the second two spaces, no stability condition is imposed (consequently, these spaces are only locally of finite type). The numerical data $\Lambda$ only affects these latter two spaces through the genus and contact orders, although the pairing of $\upbeta$ with each $D_j$ can be recovered from the balancing constraint.

A key structural result established in the above paper is that $\upeta$ is virtually smooth and strict, while $\uptheta$ is logarithmically \'etale and strict. It is important to note that, in sharp contrast to the positive tangency setting, the latter two stacks are typically reducible, non-reduced, and non-equidimensional. In the smooth topology, they are locally isomorphic to monomial subschemes in toric varieties.

\begin{remark}[Relationship to global contact orders] \label{rmk: global contact orders} The numerical data above should be compared to the \textbf{global contact orders} appearing in~\cite[Definition~2.40]{PuncturedMaps}, which record slightly more refined information. Tropically, a global contact order specifies a cone $\upsigma \in \Sigma$ to whose star the vertex supporting the marking leg must map, and a direction vector emanating from this cone and giving the slope along the marking leg. The latter belongs to the groupification of the lattice of a cone in the star of $\upsigma$. This setup is well-adapted to the splitting problem, where it is desirable to impose that an irreducible component of the source curve is mapped to a specific boundary stratum.

Our setup in \Cref{def: numerical data} takes all $\upsigma=0$. This does not dramatically change the theory. Moduli spaces corresponding to the choices $\upsigma \neq 0$ appear as boundary strata in the moduli space corresponding to $\upsigma=0$. Our setup is necessary in order to formulate the correspondence between refined punctured and orbifold Gromov--Witten theory, since the discrete data for the orbifold theory do not distinguish the $\upsigma \neq 0$ loci as spaces in their own right. However, the refined punctured theory  (\Cref{sec: refined punctured class}) is itself well-defined in the general setting of global contact orders.

Another difference is that we do not distinguish between cones spanned by the same set of rays, working always with respect to the global framing \eqref{eqn: global framing}. Again, this does not result in any dramatic changes.
\end{remark}

\subsection{Tropical punctured maps} \label{sec: tropical maps} The numerical data $\Lambda$ determines a moduli space of tropical punctured maps, obtained by gluing cones indexed by tropical types. We make use of the global framing \eqref{eqn: global framing} and focus on tropical maps to $\Rplus^k$. These can be coarser than tropical maps to $\Sigma$, but are sufficient for our purposes (see \Cref{rmk: proxy for Artin fan}).

\begin{definition} \label{def: type of tropical punctured map} A \textbf{tropical type} of punctured map to $\Rplus^k$ with numerical data $\Lambda$ consists of the following data:
\begin{itemize}
\item \textbf{Source graph.} A finite graph $\Gamma$ consisting of vertices $V(\Gamma)$, finite edges $E(\Gamma)$, ordinary markings $O(\Gamma)$, and punctures $P(\Gamma)$. This is equipped with bijections
\[ O(\Gamma) \cong O, \qquad P(\Gamma) \cong P \]
where $O$ and $P$ are the sets of ordinary markings and punctures in \Cref{def: numerical data}. Each vertex $v \in V(\Gamma)$ is equipped with genus and multidegree labels
\[ g_v \in \N, \quad d_v \in \Z^k\]
such that $g = b_1(\Gamma) + \Sigma_{v \in V(\Gamma)} g_v$ and $(d_j)_{j=1}^k = \Sigma_{v \in V(\Gamma)} d_v$.
\item \textbf{Image cones.} Faces of $\Rplus^k$ associated to every vertex and edge of $\Gamma$
\[ v \rightsquigarrow \upsigma_v, \quad e \rightsquigarrow \upsigma_e \]
such that if $v \in e$ then $\upsigma_v \subseteq \upsigma_e$.
\item \textbf{Slopes.} For each oriented edge $\vec{e}$ a slope vector $m_{\vec{e}} \in N_{\upsigma_e}$ satisfying $m_{\vec{e}}=-m_{\cev{e}}$. At each vertex $v \in V(\Gamma)$ these slopes must satisfy the balancing condition:
\[ d_v = \sum_{v \in e} c(m_{\vec{e}}) + \sum_{v \in i} (\upalpha_{ij})_{j=1}^k. \]
Here each edge $e$ is oriented to point away from $v$, the map $c$ is the global framing \eqref{eqn: global framing}, and the second sum is over markings supported at $v$. 
\end{itemize}
\end{definition}

To each tropical type $\Theta$ we associate a rational polyhedral cone $\uptau_\Theta$ embedded in the orthant
\[
\prod_{e\in E(\Gamma)}\mathbb R_{\geqslant 0} \times \prod_{v\in V(\Gamma)} \upsigma_v
\]
where the coordinates in the first factor assign a length $\ell_e \in \Rplus$ to each edge, and the coordinates in the second factor assign a position $\ftrop(v) \in \upsigma_v$ to each vertex. The cone $\uptau_\Theta$ is the locus where these quantities arise from a piecewise linear map from a metrised graph, with slopes as specified by $\Theta$.

\begin{definition}[{\!\cite[Proposition~2.32]{PuncturedMaps}}] The \textbf{tropical moduli cone} associated to $\Theta$ is the rational polyhedral cone
\[
\uptau_\Theta \subseteq \prod_{e\in E(\Gamma)}\mathbb R_{\geqslant 0} \times \prod_{v\in V(\Gamma)} \upsigma_v
\]
defined by the equations
\[ \ftrop(v_2) = \ftrop(v_1) + m_{\vec{e}}\, \ell_e \]
for every oriented edge $\vec{e} \in \vec{E}(\Gamma)$ starting at $v_1$ and ending at $v_2$.
\end{definition}

Specialisations of tropical types, including automorphisms, induce face maps of tropical moduli cones. These are used to glue the cones together into a cone stack \cite{CavalieriChanUlirschWise}:

\begin{definition} The moduli space of \textbf{tropical punctured maps} is the colimit
\[ \Tsf \colonequals \TropMap = \varinjlim_\Theta \uptau_\Theta, \]
over all cones $\uptau_\Theta$ associated to tropical types $\Theta$ with numerical data $\Lambda$, connected via face maps corresponding to specialisations of tropical types.
\end{definition}

As in \cite{CavalieriChanUlirschWise} this represents a functor on the category of rational polyhedral cones, parametrising tropical maps to $\Rplus^k$ with numerical data $\Lambda$.

Note that parallel to $\Punct_\Lambda(\Acal^k|\partial \Acal^k)$, the cone stack $\Tsf$ is not of finite type. In applications, one can restrict to a union of finitely many cones, by selecting only those tropical types that arise from geometric points in $\Punct_\Lambda(X|D)$. Examples are given in \Cref{sec: Aluffi}.

\begin{notation} To ease readability, we will use the same symbol $\uptau$ to denote both a tropical type of punctured map to $\Rplus^k$, and the associated cone in $\Tsf$. Since the former index the latter, there is no ambiguity. \end{notation}

\begin{remark} \label{rmk: proxy for Artin fan} The cone stack $\Tsf$ is the tropicalisation of $\Punct_\Lambda(\Acal^k|\partial \Acal^k)$ (see for instance \cite[Theorem~1.1]{KHNSZ} for a proximate argument). It is related to, but coarser than, the tropicalisation of $\Punct_\Lambda(\Acal(\Sigma)|\partial \Acal(\Sigma))$. We essentially never contemplate the tropicalisation of $\Punct_\Lambda(X|D)$ -- even the question of when a putative stratum of this space is nonempty is impractical; it is also unnecessary. We have strict maps
\[ \Punct_\Lambda(X|D) \xrightarrow{\upeta} \Punct_\Lambda(\Acal(\Sigma)|\partial \Acal(\Sigma)) \xrightarrow{\uptheta} \Punct_\Lambda(\Acal^k|\partial \Acal^k).\]
 The natural stratifications of the latter two spaces, indexed by cones in a cone stack, give rise to stratifications on the space $\Punct_\Lambda(X|D)$ by pullback. These two stratifications differ slightly. The morphism $\uptheta$ is usually the inclusion of the complement of a union of closed strata; however it may map multiple strata onto one, if an intersection of divisor components $D_j$ is disconnected. The cone stack $\Tsf = \Trop_\Lambda(\Rplus^k)$ contains sufficient information for our purposes, and will serve as a proxy for the tropicalisation of the geometric moduli space.
\end{remark}

\begin{remark} Given a puncture $p \in P(\Gamma)$ with supporting vertex $v \in V(\Gamma)$, we do not impose that $\upsigma_v$ contains all the rays corresponding to the divisors $j \in [k]$ with $\upalpha_{pj} < 0$. However, tropical types which do not satisfy this condition will correspond to strata disjoint from the puncturing substack $\Vcal(\Tsf) \hookrightarrow \Acal(\Tsf)$, and hence disjoint from the image of $\Punct_\Lambda(\Acal^k | \partial \Acal^k) \to \Acal(\Tsf)$.\end{remark}

\subsection{Puncturing substack} \label{sec: puncturing substack}
Punctures are depicted tropically by appending a leg of slope $(\upalpha_{ij})_{j=1}^k$ to the supporting vertex:
\[
\begin{tikzpicture}

\draw[fill=black,black] (0,0) circle[radius=1.5pt];
\draw[black,->] (0,0) -- (3,0);
\draw[black] (3,0) node[right]{\small$D_1$};
\draw[black,->] (0,0) -- (0,3);
\draw[black] (0,3) node[above]{\small$D_2$};

\draw[fill=blue,blue] (0,0) circle[radius=1.5pt];
\draw[-stealth,thick,blue] (0,0) -- (0.75,0.75);
\draw[blue] (0.85,0.7) node[below]{\tiny$\begin{psmallmatrix} 1 \\ 1 \end{psmallmatrix}$};
\draw[thick,blue] (0.75,0.75) -- (1.5,1.5);
\draw[fill=blue,blue] (1.5,1.5) circle[radius=1.5pt];

\draw[-stealth,thick,blue] (1.5,1.5) -- (0,0.75);
\draw[blue] (0.1,0.75) node[left]{\tiny$\begin{psmallmatrix}\!-\!2 \\ \!-\!1 \end{psmallmatrix}$};
	
\draw[-stealth,thick,blue] (1.5,1.5) -- (0,2.25);
\draw[blue] (0.1,2.25) node[left]{	\tiny$\begin{psmallmatrix}\!-\!2 \\ 1 \end{psmallmatrix}$};

\draw[-stealth,thick,blue] (1.5,1.5) -- (2.75,1.5);
\draw[blue] (2.65,1.5) node[right]{\tiny$\begin{psmallmatrix} 5 \\ 0 \end{psmallmatrix}$};

\draw[-stealth,thick,blue] (1.5,1.5) -- (1.5,2.75);
\draw[blue] (1.5,2.75) node[above]{\tiny$\begin{psmallmatrix} 0 \\ 1 \end{psmallmatrix}$};
\end{tikzpicture}
\]
The puncturing offset is the position of the vertex supporting the puncture, measured with respect to those directions $j$ for which $\upalpha_{ij} < 0$, see \Cref{def: puncturing offset} below. In \cite[Section~2.2]{PuncturedMaps} the puncturing offset is required to be positive in all directions. We do not impose this condition on $\Tsf$. The special locus inside $\Tsf$ where this condition holds will play a central role in the theory.

\begin{definition} \label{def: puncturing rank} Given a marking $i \in [n]$ the \textbf{puncturing rank} $k_i$ is the number of divisor components $D_j$ for which $\upalpha_{ij} < 0$. A marking is a puncture if and only if it has positive puncturing rank. The \textbf{total puncturing rank} is the sum:
\[ k_P \colonequals \sum_{i=1}^n k_i.\]
For $i \in [n]$ we have $0 \leqslant k_i \leqslant k$, and hence we have $0 \leqslant k_P \leqslant nk$.
\end{definition}

\begin{definition} \label{def: puncturing offset}
 For each puncture $p \in P$ there is a morphism of cone stacks recording the \textbf{puncturing offset}:
\[ \ftrop(p) \colon \Tsf \to \Rplus^{k_p}. \]
This is given by $\ftrop(p) \colonequals \ptrop(\ftrop(v))$, where $v \in V(\Gamma)$ is the vertex supporting the puncture $p$, the position $\ftrop(v) \in \Rplus^k$ is the image of this vertex under the tropical map, and $\ptrop$ is the projection
\[ \Rplus^k \to \Rplus^{k_p} \]
onto the face spanned by the rays corresponding to those $D_j$ with $\upalpha_{ij}<0$.
\end{definition}

Note that $\ftrop(p)$ records a length in the \emph{target} of the tropical map, not the source, and does not depend on the multiplicities of the puncture. Heuristically, requiring the puncturing offset to be positive cuts out a union of face interiors $\mathsf{V} \subseteq \Tsf$, obtained as a fibre product:
\begin{equation} \label{eqn: diagram tropical puncturing offsets positive}
\begin{tikzcd}
\mathsf{V} \ar[r,hook] \ar[d] \ar[rd,phantom,"\square" right] & \Tsf \ar[d] \\
\RR_{>0}^{k_P} \ar[r,hook] & \RR_{\geqslant 0}^{k_P}.
\end{tikzcd}
\end{equation}
Inspired by this heuristic tropical diagram, we make the following:
\begin{definition}
The \textbf{puncturing substack} is the fibre product of algebraic stacks
\begin{equation} \label{eqn: diagram puncturing substack as fibre product}
\begin{tikzcd}
\Vcal(\Tsf) \ar[r,hook] \ar[d] \ar[rd,phantom,"\square"] & \Acal(\Tsf) \ar[d] \\
\Bcal\Gm^{k_P} \ar[r,hook,"\upiota"] & \Acal^{k_P}
\end{tikzcd}
\end{equation}
where the morphism $\Acal(\Tsf) \to \Acal^{k_P}$ is induced by the puncturing offset morphisms $\ftrop(p)$ for all $p \in P$.
\end{definition}

We now reformulate the main results on punctured logarithmic maps in terms of the puncturing substack.

\begin{lemma} The puncturing substack $\Vcal(\Tsf) \hookrightarrow \Acal(\Tsf)$ is the vanishing substack of the puncturing logarithmic ideal, as defined in \cite[Definition~2.49]{PuncturedMaps}.\end{lemma}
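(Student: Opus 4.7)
The plan is to unwind both definitions and observe that they coincide via the standard dictionary between cone stacks and Artin fans. Recall that $\Bcal\Gm \hookrightarrow \Acal = [\Aone/\Gm]$ is precisely the vanishing substack of the tautological monomial on $\Acal$; taking products, $\upiota\colon \Bcal\Gm^{k_P} \hookrightarrow \Acal^{k_P}$ is the vanishing substack of the $k_P$ coordinate monomials. Consequently, for any Artin fan $\Acal(\Tsf)$ and any morphism $\ftrop\colon \Acal(\Tsf) \to \Acal^{k_P}$, the fibre product $\Acal(\Tsf) \times_{\Acal^{k_P}} \Bcal\Gm^{k_P}$ is the closed substack cut out by the monomial ideal generated by the pullbacks of these $k_P$ coordinate monomials.

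Next, I would recall the tropical-to-algebraic dictionary: a morphism of cone stacks $\Tsf \to \Rplus$ corresponds to a global section of the characteristic sheaf of $\Acal(\Tsf)$, and the induced morphism $\Acal(\Tsf) \to \Acal$ pulls the tautological monomial on $\Acal$ back to this section. Applying this to each coordinate of $\ftrop(p) \colon \Tsf \to \Rplus^{k_p}$, the morphism $\Acal(\Tsf) \to \Acal^{k_P}$ of \eqref{eqn: diagram puncturing substack as fibre product} is constructed so that the pullbacks of the $k_P$ coordinate monomials are exactly the elements of the characteristic sheaf recording, for each puncture $p$ and each direction $j$ with $\upalpha_{pj}<0$, the projection onto the $j$-th coordinate of the position of the vertex supporting $p$.

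Finally, I would match these monomials against [PuncturedMaps, Definition~2.42]. The puncturing logarithmic ideal is, by construction, the monomial ideal whose generators are exactly these puncturing offset elements, one for every pair $(p,j)$ with $\upalpha_{pj}<0$. Its vanishing substack is thus locally cut out in $\Acal(\Tsf)$ by the same family of monomial equations produced by the fibre product description. This identifies $\Vcal(\Tsf)$ with the vanishing substack of the puncturing logarithmic ideal.

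The only non-routine point is verifying that the puncturing offset morphism $\ftrop(p)$ constructed in \Cref{sec: puncturing substack}, phrased via the face projection $\Rplus^k \to \Rplus^{k_p}$ on the image of the supporting vertex, really matches the generators of the puncturing logarithmic ideal in the sense of \cite{PuncturedMaps}. Here one must check that the face projection on the target cone corresponds under the dictionary to extracting precisely the smoothing parameters of the divisors $D_j$ with $\upalpha_{pj}<0$ at the node or puncture point, which is the content of the definition of the puncturing ideal. Once this bookkeeping is in place the equality of the two closed substacks is immediate.
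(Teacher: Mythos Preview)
Your approach is correct in outline and close to the paper's, which simply cites \cite[Proposition~2.51]{PuncturedMaps} and its proof. Both arguments reduce to matching the monomial generators cutting out $\Vcal(\Tsf)$ against those defining the puncturing ideal of \cite{PuncturedMaps}.

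There is one subtlety you gloss over in the final paragraph. You assert that the generators of the puncturing logarithmic ideal in \cite{PuncturedMaps} are ``exactly these puncturing offset elements,'' but this is not quite literal: the puncturing ideal there is formulated in terms of data on the source curve, whereas your $\ftrop(p)$ records a \emph{target} position (as the paper stresses just before the definition of the puncturing substack). The two differ by the slope $|\upalpha_{pj}|$, so the monomial generators are not identical but are powers of one another. The paper handles this by invoking \cite[Proposition~2.51]{PuncturedMaps}, noting explicitly that ``the radical appearing in the statement of that result accounts for the difference between source and target lengths.'' Your proof would be complete once you either cite that proposition or argue directly that the two monomial ideals have the same radical and hence cut out the same closed substack.
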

\begin{proof} This follows directly from \cite[Proposition~2.57]{PuncturedMaps} and its proof. The radical appearing in the statement of that result accounts for the difference between source and target lengths. \end{proof}

The stack of punctured curves is a closed substack of the stack of logarithmic curves, which is itself obtained from the stack of prestable curves by base changing over its Artin fan \cite[Proposition~3.3]{PuncturedMaps}. Maps from a fixed punctured curve to $(\Acal^k|\partial \Acal^k)$ are given by a collection of piecewise linear functions on the tropicalisation \cite[Proposition~2.10]{ACGSDecomposition}. We conclude (see also \cite[Theorem~3.25]{PuncturedMaps}) that the following square is cartesian
\begin{equation} \label{eqn: cartesian square punctured maps over prestable curves}
\begin{tikzcd}
\Punct_\Lambda(\Acal^k | \partial \Acal^k) \ar[r] \ar[d,"\upnu"] \ar[rd,phantom,"\square"] & \Mfrak \ar[d] \\
\Vcal(\Tsf) \ar[r] & \Acal(\MTrop),
\end{tikzcd}	
\end{equation}
where $\Mfrak$ is the stack of prestable curves and $\MTrop$ is the stack of tropical curves \cite{ACP,CavalieriChanUlirschWise}. The right vertical arrow is smooth and the bottom horizontal arrow is the composite of the inclusion $\Vcal(\Tsf) \hookrightarrow \Acal(\Tsf)$ with the map $\Acal(\Tsf) \to \Acal(\MTrop)$ forgetting the vertex positions.

We now summarise \cite[Section~4]{PuncturedMaps}. There are morphisms:
\[ \Punct_\Lambda(X|D) \xrightarrow{\upmu} \Punct_\Lambda(\Acal^k|\partial \Acal^k) \xrightarrow{\upnu} \Vcal(\Tsf).\]
The authors prove that $\upmu$ is virtually smooth, and it follows from \eqref{eqn: cartesian square punctured maps over prestable curves} that $\upnu$ is smooth. However, the following composite is typically not smooth:
\[ \Punct_\Lambda(\Acal^k|\partial \Acal^k) \to \Vcal(\Tsf) \hookrightarrow \Acal(\Tsf).\]
While $\Acal(\Tsf)$ is logarithmically smooth and hence irreducible, the substack $\Vcal(\Tsf)$ typically consists of multiple irreducible components of various dimensions. This is visible tropically: the irreducible components correspond to minimal tropical types of punctured maps to $\Rplus^k$, i.e. types that cannot be further specialised without setting some coordinates of the puncturing offset to zero. The dimension of each irreducible component is the negative of the dimension of the corresponding cone.

\subsection{Refined punctured theory} \label{sec: refined punctured class} The base of the obstruction theory $\Vcal(\Tsf)$ is typically not equidimensional, and so does not carry a preferred Chow class. In \cite{PuncturedMaps} the authors define punctured invariants by pulling back \emph{any} Chow class from $\Vcal(\Tsf)$. Typically this is taken to be the class of an irreducible component, or more generally a stratum.

Our reformulation via \eqref{eqn: diagram puncturing substack as fibre product} expresses $\Vcal(\Tsf) \hookrightarrow \Acal(\Tsf)$ as the pullback of a regular embedding (we emphasise that $\Vcal(\Tsf) \hookrightarrow \Acal(\Tsf)$ is rarely itself a regular embedding). This allows us to canonically define a virtual class on $\Vcal(\Tsf)$ of pure dimension. 

\begin{definition} \label{def: refined virtual class}
The \textbf{refined virtual class} on $\Vcal(\Tsf)$ is the Gysin pullback:
\[
[\Vcal(\Tsf)]^{\refined} \colonequals \upiota^![\mathcal A(\Tsf)]. 
\]
The \textbf{refined virtual class} on $\Punct_\Lambda(\Acal^k|\partial \Acal^k)$ is the smooth pullback
\[ [\Punct_\Lambda(\Acal^k|\partial \Acal^k)]^{\refined} \colonequals \upnu^\star [\Vcal(\Tsf)]^{\refined} \]
and the \textbf{refined virtual class} on $\Punct_\Lambda(X|D)$ is the virtual pullback
\[ [\Punct_\Lambda(X|D)]^{\refined} \colonequals \upmu^![\Punct_\Lambda(\Acal^k|\partial \Acal^k)]^{\refined} \]
with respect to the perfect obstruction theory for $\upmu$ constructed in \cite[Section~4]{PuncturedMaps}.
\end{definition}

Unlike the spaces $\Vcal(\Tsf)$ or $\Punct_\Lambda(\Acal^k|\partial \Acal^k)$, the refined virtual class has pure dimension, given by:
\[ (\dim X -3)(1-g) - (K_X+D)\cdot \upbeta + n - k_P. \]
This new virtual class isolates a special sector of the punctured Gromov--Witten theory. For each marked point $p_i$ one can discern a priori, from just the contact orders, a stratum of $(X|D)$ to which $p_i$ must map: the intersection of all boundary divisors with which $p_i$ has nonzero contact order. We introduce this formally in the next section, and denote it $D_{J(i)}$. These strata, taken over all marked points, give the natural space of insertions for the refined theory:

\begin{definition} The \textbf{refined sector} of the punctured Gromov--Witten theory of $(X|D)$ is the system of integrals against the refined virtual class
\[ \langle \upgamma_1 \uppsi_1^{k_1},\ldots,\upgamma_n \uppsi_n^{k_n} \rangle^{\refined}_{\Lambda} \colonequals \int_{[\Punct_\Lambda(X|D)]^{\refined}} \prod_{i=1}^n \ev_i^\star \upgamma_i \cdot \uppsi_i^{k_i} \]
where $\upgamma_i \in A^\star(D_{J(i)})$ is a class on the closed stratum of $X$ to which the marking $p_i$ is constrained by the tangency conditions (see \Cref{sec: main theorem}).
\end{definition}

\subsection{Strata formulae for the refined virtual class and examples} \label{sec: Aluffi} The refined virtual class is defined via intersection theory on the Artin fan, and hence (after possibly passing to a resolution) admits a formula in terms of strata classes decorated by Chow operators. Further details on these calculations are given in~\cite{MolchoRanganathan}.

The purpose of this section is to demystify the refined virtual class. We record two special cases (Sections~\ref{sec: rank 1 class} and \ref{sec: regularly embedded class}), and then describe the general case (\Cref{sec: general case class}). These formulae take advantage of the computability of Segre classes of monomial schemes, following work of Aluffi~\cite{AluffiPolytopes}. Several explicit examples are provided.

\subsubsection{Puncturing rank one} \label{sec: rank 1 class} Recall the total puncturing rank from \Cref{def: puncturing rank}. We begin with the important special case
\[ k_P=1.\]
These are precisely the invariants which appear in the intrinsic mirror construction \cite{GS19}. Note that the logarithmic rank $k$ of the target may be arbitrary.

In this setting, the closed embedding $\Bcal \Gm \hookrightarrow \Acal$ appearing in \eqref{eqn: diagram puncturing substack as fibre product} is a Cartier divisor. It follows that $\Vcal(\Tsf) \hookrightarrow \Acal(\Tsf)$ is also a Cartier divisor, and
\begin{equation} [\Vcal(\Tsf)]^{\refined} = [\Vcal(\Tsf)].\end{equation}
Thus for $k_P=1$ the refined punctured theory coincides with the standard punctured theory. The fundamental class decomposes as a sum over minimal tropical types, indexing the irreducible components of $\Vcal(\Tsf)$. Since $\Vcal(\Tsf)$ has pure dimension $-\!1$, each minimal tropical type has dimension~$1$. 

\begin{example} \label{example: puncturing rank one} Consider the geometric target $(X|D)=(\PP^r|H)$ with $r \geqslant 2$ and where $H\subseteq\PP^r$ is a hyperplane. Take numerical data $\upbeta=d=1$ and $n=2$ with tangency orders:
\[ \upalpha_1=2, \quad \upalpha_2=-1.\]
As noted previously, fixing the geometric target $(X|D)$ and numerical data $\Lambda$ allows us to replace $\Tsf$ by a substack comprising finitely many cones. This replaces $\Acal(\Tsf)$ by an open substack containing the image of $\Punct_\Lambda(\PP^r|H)$. This does not affect the calculation of the refined virtual class. On this open substack we identify two minimal tropical types
\[
\begin{tikzpicture}[scale=1.3]


\draw[fill=black] (-2,-1) circle[radius=1.5pt];
\draw[blue] (-2,-1) node[below]{\tiny$1$};

\draw[->] (-2,-1) -- (-1.5,-1);
\draw (-1.5,-1) -- (-1,-1);
\draw[blue] (-1.55,-1) node[below]{\tiny$1$};

\draw[fill=black] (-1,-1) circle[radius=1.5pt];
\draw[blue] (-1,-1) node[below]{\tiny$0$};

\draw[->] (-1,-1) -- (-0.25,-1);
\draw[blue] (-0.325,-1) node[right]{\tiny$2$};

\draw[->] (-1,-1) -- (-2,-0.5);
\draw[blue] (-1.925,-0.5) node[left]{\tiny$-\!1$};

\draw[fill=blue,blue] (-2,-2) circle[radius=1.5pt];
\draw[blue,->] (-2,-2) -- (0,-2);

\draw[blue,->] (-1,-1.4) -- (-1,-1.8);



\draw[fill=black] (3,-1) circle[radius=1.5pt];
\draw[blue] (3,-1) node[below]{\tiny$1$};

\draw[->] (3,-1) -- (3.75,-1);
\draw[blue] (3.675,-1) node[right]{\tiny$2$};

\draw[->] (3,-1) -- (2,-1);
\draw[blue] (2.075,-1) node[left]{\tiny$-\!1$};

\draw[fill=blue,blue] (2,-2) circle[radius=1.5pt];
\draw[blue,->] (2,-2) -- (4,-2);

\draw[blue,->] (3,-1.4) -- (3,-1.8);

\draw (3,-2.25);
\end{tikzpicture}
\]
where each vertex is labelled with its degree. Each tropical moduli cone is isomorphic to $\Rplus$, and the puncturing substack corresponds to $\RR_{>0}$. These index the irreducible components of $\Vcal(\Tsf)$. 

Since $(\PP^r|H)$ is convex, the logarithmic structure morphism $\Punct_\Lambda(\PP^r|H) \to \Vcal(\Tsf)$ is smooth, and so these irreducible components pull back to the irreducible components of the punctured moduli space. In the first case, the general point parametrises a line in $\PP^r$ transverse to $H$, with a $2$-marked rational tail attached at its intersection point with $H$. In the second case, the general point parametrises a line in $H$ with a choice of two distinct points on it. Both components have dimension $2r-2$. The refined virtual class is simply the sum of the fundamental classes of the two components.
\end{example}

\subsubsection{Regularly embedded puncturing substack} \label{sec: regularly embedded class} Extending the previous case, let $k_P \in \N$ be arbitrary and consider the fibre square:
\[
\begin{tikzcd}
\Vcal(\Tsf) \ar[r,hook,"\upkappa"] \ar[d,"\uprho"] \ar[rd,phantom,"\square"] & \Acal(\Tsf) \ar[d] \\
\Bcal\Gm^{k_P} \ar[r,hook,"\upiota"] & \Acal^{k_P}.
\end{tikzcd}
\]
Now assume that $\upkappa$ is a regular embedding (when $\Tsf$ is smooth this occurs if and only if all moduli cones associated to minimal tropical types have the same dimension). In this case $\Vcal(\Tsf)$ is pure-dimensional and hence carries a fundamental class. Let $e \colonequals \operatorname{codim} \upiota - \operatorname{codim} \upkappa$. The excess intersection formula \cite[Theorem~6.3]{FultonBig} then gives:
\begin{equation} \label{eqn: refined class excess intersection} [\Vcal(\Tsf)]^{\refined} = c_e(\uprho^\star N_{\upiota}/N_\upkappa) \cap [\Vcal(\Tsf)]. \end{equation}

\begin{example} \label{example: higher rank regular embedding} Consider the geometric target $(X|D)=(\PP^2|L_1+L_2)$ where $L_1,L_2$ are distinct lines. Take the numerical data $\upbeta=d=1$ and $n=2$ with tangency orders:
\[ \upalpha_1 = (2,2), \quad \upalpha_2=(-1,-1).\]
There is a single puncture of puncturing rank $k_2=2$ and hence the total puncturing rank is $k_P=2$. We replace $\Tsf$ by a subcomplex, corresponding to an open substack of $\Acal(\Tsf)$ through which $\Punct_\Lambda(X|D)$ factors. This is obtained by enumerating the tropical types arising from punctured maps with the given numerical data. The subcomplex $\Tsf$ is illustrated below, with each cone labelled by its corresponding tropical type and a symbol $Z_i$ which denotes the corresponding boundary divisor in the Artin fan (and its Chow class):
\[
\begin{tikzpicture}[scale=0.9]

\draw[fill=gray,gray] (0,0) circle[radius=1.5pt];
\draw[gray,thick,->] (0,0) -- (7,0);
\draw[gray] (7,0) node[right]{$Z_1$};
\draw[gray,thick,->] (0,0) -- (6,6);
\draw[gray] (6.25,6.25) node{$Z_0$};
\draw[gray,thick,->] (0,0) -- (0,7);
\draw[gray] (0,7) node[above]{$Z_2$};

\draw[black,fill=black] (6.5,6.5) circle[radius=1pt];
\draw[black,->] (6.5,6.5) -- (7.5,6.5);
\draw[black,->] (6.5,6.5) -- (6.5,7.5);
\draw[blue,fill=blue] (6.5,6.5) circle[radius=1.5pt];
\draw[blue] (6.525,6.5) -- (7.025,7);
\draw[blue,fill=blue] (6.98,7) circle[radius=1.6pt];
\draw[blue,-stealth] (6.93,7) -- (6.53,6.6);
\draw[blue,-stealth] (6.98,7) -- (7.48,7.5);

\draw[black,fill=black] (7.75,-0.5) circle[radius=1pt];
\draw[black,->] (7.75,-0.5) -- (8.75,-0.5);
\draw[black,->] (7.75,-0.5) -- (7.75,0.5);
\draw[blue,fill=blue] (8.15,-0.5) circle[radius=1.5pt];
\draw[blue,-stealth] (8.15,-0.5) -- (7.95,-0.7);
\draw[blue,-stealth] (8.15,-0.5) -- (8.65,0);

\draw[black,fill=black] (-0.5,7.75) circle[radius=1pt];
\draw[black,->] (-0.5,7.75) -- (0.5,7.75);
\draw[black,->] (-0.5,7.75) -- (-0.5,8.75);
\draw[blue,fill=blue] (-0.5,8.15) circle[radius=1.6pt];
\draw[blue,-stealth] (-0.5,8.15) -- (-0.7,7.95);
\draw[blue,-stealth] (-0.5,8.15) -- (0,8.65);	

\draw[black,fill=black] (5,2.5) circle[radius=1pt];
\draw[black,->] (5,2.5) -- (6,2.5);
\draw[black,->] (5,2.5) -- (5,3.5);
\draw[blue,fill=blue] (5.4,2.5) circle[radius=1.5pt];
\draw[blue] (5.425,2.5) -- (5.925,3);
\draw[blue,fill=blue] (5.88,3) circle[radius=1.6pt];
\draw[blue,-stealth] (5.83,3) -- (5.43,2.6);
\draw[blue,-stealth] (5.88,3) -- (6.38,3.5);

\draw[black,fill=black] (2.5,5) circle[radius=1pt];
\draw[black,->] (2.5,5) -- (3.5,5);
\draw[black,->] (2.5,5) -- (2.5,6);
\draw[blue,fill=blue] (2.5,5.4) circle[radius=1.5pt];
\draw[blue] (2.525,5.4) -- (3.025,5.9);
\draw[blue,fill=blue] (2.98,5.9) circle[radius=1.6pt];
\draw[blue,-stealth] (2.93,5.9) -- (2.53,5.5);
\draw[blue,-stealth] (2.98,5.9) -- (3.48,6.4);
	
\end{tikzpicture}
\]
The tropical moduli consists of the position of the vertex $v_0$ which supports the markings. When $v_0$ belongs to the interior of $\Rplus^2$ we assign it degree $0$, and bubble an edge of slope $(-1,-1)$ which ends at a vertex of degree $1$ belonging to the boundary of $\Rplus^2$. When $v_0$ belongs to the boundary, we assign it degree $1$ and do not bubble any edges. We see immediately that
\[ \Vcal(\Tsf) = Z_0.\]
The puncturing offset map $\Tsf \to \Rplus^2$ records the position of the vertex $v_0$. Examining the slope along the rays, we obtain
\[ \uprho^\star N_\upiota = \OO(Z_0+Z_1) \oplus \OO(Z_0+Z_2).\]
From this we compute the total Chern class of the excess bundle:
\begin{align*} c(\uprho^\star N_\upiota/N_\upkappa) & = \dfrac{(1+Z_0+Z_1)(1+Z_0+Z_2)}{1+Z_0}	 = 1 + (Z_0+Z_1+Z_2) + \cdots
\end{align*}
We have $e= \operatorname{codim} \upiota - \operatorname{codim} \upkappa = 2-1 = 1$ and from \eqref{eqn: refined class excess intersection} we obtain:
\begin{align*} [\Vcal(\Tsf)]^{\refined} & = c_1(\uprho^\star N_\upiota/N_\upkappa) \cap [\Vcal(\Tsf)] \\
& = (Z_0+Z_1+Z_2)  Z_0 \\
& = Z_0^2 + Z_0Z_1 + Z_0Z_2 \\
& = (-\uppsi_{v_0 \in e}-\uppsi_{v_1 \in e}) Z_0 + Z_0Z_1 + Z_0Z_2
\end{align*}
where the cotangent line classes are associated to the flags of the source graph. We have thus expressed the refined virtual class as a linear combination of tautological strata classes.
\end{example}

\subsubsection{General case} \label{sec: general case class} In general the puncturing substack $\Vcal(\Tsf)$ is not pure-dimensional and the previous arguments do not apply. In this setting, the refined virtual class is given by the following formula \cite[Proposition~6.1(a)]{FultonBig}:
\begin{equation} \label{eqn: refined class general} [\Vcal(\Tsf)]^{\refined} = \{ c(\uprho^\star N_\upiota) \cap s(\Vcal(\Tsf),\Acal(\Tsf)) \}_{-k_P}.\end{equation}
The Chern class is easily described in terms of piecewise polynomials (see Examples~\ref{example: higher rank regular embedding} and \ref{ex: resolution calculation}). The Segre class is computed via either of the following two techniques.
\begin{enumerate}
\item \textbf{Aluffi approach.} The puncturing substack $\Vcal(\Tsf) \hookrightarrow \Acal(\Tsf)$ is a union of boundary strata. Precisely, it is cut out by monomials in the equations defining the boundary divisors. Then \cite[Theorem~1.1]{AluffiPolytopes} expresses its Segre class as a polynomial in strata classes. 
\item \textbf{Resolution rubric.} Order the irreducible components of $\Vcal(\Tsf)$ and perform the corresponding iterated blowup of $\Acal(\Tsf)$. The total transform of $\Vcal(\Tsf)$ is then a Cartier divisor, and pushing forward the inverse of its total Chern class produces $s(\Vcal(\Tsf),\Acal(\Tsf))$ by birational invariance of Segre classes \cite[Corollary~4.2.1]{FultonBig}. 
\end{enumerate}

\begin{remark} Both techniques require $\Tsf$ smooth, which is typically not the case for tropical maps \cite{Universality}. To begin the calculation, we replace $\Tsf$ with a resolution $\Tsf^\prime$ and base change the puncturing substack:
\[ 
\begin{tikzcd}
\uprho^{-1}\Vcal(\Tsf) \ar[r,hook] \ar[d,"\uprho"] \ar[rd,phantom,"\square"] & \Acal(\Tsf^\prime) \ar[d,"\uprho"] \\
\Vcal(\Tsf) \ar[r,hook] & \Acal(\Tsf).	
\end{tikzcd}
\]
We can perform the computation on the resolution, since \cite[Proposition~4.2]{FultonBig} gives:
\[ \uprho_\star s(\uprho^{-1}\Vcal(\Tsf),\Acal(\Tsf^\prime)) = s(\Vcal({\Tsf}),\Acal({\Tsf})).\]
Alternatively, the singular locus can be excised if it has smaller dimension than the refined virtual class, as in the following example.
\end{remark}

\begin{example}	\label{ex: resolution calculation} Let $X$ be the blowup of the Hirzebruch surface $\FF_1$ at a torus-fixed point on the $(-1)$-curve. Let $E_1 \subseteq X$ be the strict transform of the $(-1)$-curve, and $E_2 \subseteq X$ the exceptional divisor. These generate a face of the effective cone. We have
\[ E_1^2 = -2, \quad E_2^2 = -1, \quad E_1 E_2 = 1.\]
Consider the pair $(X|E_1)$. Take $\upbeta=2E_1+2E_2$ so that $E_1 \cdot \upbeta=-2$, and take $n=2$ with $\upalpha_1=\upalpha_2=-1$. This defines numerical data $\Lambda$ for a moduli space of stable punctured maps to $(X|E_1)$.

Fix a tropical type arising from a stable punctured map to $(X|E_1)$, and choose a vertex $v \in V(\Gamma)$. By balancing and the rigidity of $E_1$, we see that if $\upsigma_v=0$ then $\upbeta_v \in \N E_2$, and if $\upsigma_v=\Rplus$ then $\upbeta_v \in \N E_1$. From this we deduce that $\Vcal(\Tsf)$ has three irreducible components, indexed by the following types:
\[
\begin{tikzpicture}[scale=1.3]


\draw[fill=black] (-2,-1) circle[radius=1.5pt];
\draw[blue] (-2,-1) node[below]{\tiny$2E_2$};

\draw (-2,-1) -- (-1,-1);
\draw[blue] (-1.5,-1.05) node[above]{\tiny$2$};

\draw[fill=black] (-1,-1) circle[radius=1.5pt];
\draw[blue] (-1,-1) node[below]{\tiny$2E_1$};

\draw[->] (-1,-1) -- (-2,-0.5);
\draw[->] (-1,-1) -- (-2,-1.5);

\draw[fill=blue,blue] (-2,-2) circle[radius=1.5pt];
\draw[blue,->] (-2,-2) -- (0,-2);

\draw[blue,->] (-1,-1.4) -- (-1,-1.8);

\draw (-1,-2.5) node{$Z_1$};



\draw[fill=black] (1,-0.8) circle[radius=1.5pt];
\draw[blue] (1,-0.8) node[left]{\tiny$E_2$};

\draw[fill=black] (1,-1.2) circle[radius=1.5pt];
\draw[blue] (1,-1.2) node[left]{\tiny$E_2$};

\draw (1,-0.8) -- (2,-1);
\draw (1,-1.2) -- (2,-1);

\draw[fill=black] (2,-1) circle[radius=1.5pt];
\draw[blue] (2,-1) node[below]{\tiny$2E_1$};

\draw[->] (2,-1) -- (1,-0.5);
\draw[->] (2,-1) -- (1,-1.5);

\draw[fill=blue,blue] (1,-2) circle[radius=1.5pt];
\draw[blue,->] (1,-2) -- (3,-2);

\draw[blue,->] (2,-1.4) -- (2,-1.8);

\draw (2,-2.5) node{$Z_2$};



\draw[fill=black] (4,-1) circle[radius=1.5pt];
\draw[blue] (4,-1) node[below]{\tiny$2E_2$};

\draw[fill=black] (5.3,-0.7) circle[radius=1.5pt];
\draw[blue] (5.3,-0.7) node[right]{\tiny$E_1$};

\draw[fill=black] (5,-1.3) circle[radius=1.5pt];
\draw[blue] (5,-1.3) node[right]{\tiny$E_1$};

\draw (4,-1) -- (5.3,-0.7);
\draw (4,-1) -- (5,-1.3);

\draw[->] (5.3,-0.7) -- (4,-0.5);
\draw[->] (5,-1.3) -- (4,-1.5);

\draw[fill=blue,blue] (4,-2) circle[radius=1.5pt];
\draw[blue,->] (4,-2) -- (6,-2);

\draw[blue,->] (5,-1.55) -- (5,-1.8);

\draw (5,-2.5) node{$W_0$};

\draw (3,-2.25);
\end{tikzpicture}
\]
An undecorated edge has weight $1$, while each vertex is decorated with the corresponding curve class. The symbols $Z_i$ and $W_0$ denote the corresponding closed substacks of the Artin fan (and their Chow classes). We have $\dim Z_1=\dim Z_2 = -1$ and $\dim W_0 = -2$. The pairwise intersections correspond to the following types:
\[
\begin{tikzpicture}[scale=1.3]


\draw[fill=black] (-2,-1.2) circle[radius=1.5pt];
\draw[blue] (-2,-1.2) node[left]{\tiny$E_2$};

\draw[fill=black] (-2,-0.8) circle[radius=1.5pt];
\draw[blue] (-2,-0.8) node[left]{\tiny$E_2$};

\draw (-2,-1.2) -- (-1.3,-1);
\draw (-2,-0.8) -- (-1.3,-1);

\draw[fill=black] (-1.3,-1) circle[radius=1.5pt];
\draw[blue] (-1.3,-0.975) node[below]{\tiny$0$};

\draw[fill=black] (-0.5,-1) circle[radius=1.5pt];
\draw[blue] (-0.5,-1) node[below]{\tiny$2E_1$};

\draw[fill=black] (-1.3,-1) -- (-0.5,-1);
\draw[blue] (-0.9,-1.08) node[above]{\tiny$2$};

\draw[->] (-0.5,-1) -- (-2,-0.3);
\draw[->] (-0.5,-1) -- (-2,-1.7);

\draw[fill=blue,blue] (-2,-2) circle[radius=1.5pt];
\draw[blue,->] (-2,-2) -- (0,-2);

\draw[blue,->] (-1,-1.4) -- (-1,-1.8);

\draw (-1,-2.5) node{$W_{12} \colonequals Z_1\!\cap\!Z_2$};



\draw[fill=black] (1,-1) circle[radius=1.5pt];
\draw[blue] (1,-1) node[left]{\tiny$2E_2$};

\draw (1,-1) -- (1.7,-1);
\draw[blue] (1.35,-1.05) node[above]{\tiny$2$};

\draw[fill=black] (1.7,-1) circle[radius=1.5pt];
\draw[blue] (1.7,-1) node[below]{\tiny$0$};

\draw (1.7,-1) -- (2,-0.7);
\draw[blue] (2,-0.7) node[right]{\tiny$E_1$};

\draw (1.7,-1) -- (2.2,-1.3);
\draw[blue] (2.2,-1.3) node[right]{\tiny$E_1$};

\draw[fill=black] (2,-0.7) circle[radius=1.5pt];
\draw[fill=black] (2.2,-1.3) circle[radius=1.5pt];

\draw[->] (2,-0.7) -- (1,-0.5);
\draw[->] (2.2,-1.3) -- (1,-1.5);

\draw[fill=blue,blue] (1,-2) circle[radius=1.5pt];
\draw[blue,->] (1,-2) -- (3,-2);

\draw[blue,->] (2,-1.4) -- (2,-1.8);

\draw (2,-2.5) node{$Z_1\! \cap \! W_0$};



\draw[fill=black] (4,-1.2) circle[radius=1.5pt];
\draw[blue] (4,-1.2) node[left]{\tiny$E_2$};

\draw[fill=black] (4,-0.8) circle[radius=1.5pt];
\draw[blue] (4,-0.8) node[left]{\tiny$E_2$};

\draw[fill=black] (4.6,-1) circle[radius=1.5pt];
\draw[blue] (4.6,-0.98) node[below]{\tiny$0$};

\draw[fill=black] (5.3,-0.7) circle[radius=1.5pt];
\draw[blue] (5.3,-0.7) node[right]{\tiny$E_1$};

\draw[fill=black] (5,-1.3) circle[radius=1.5pt];
\draw[blue] (5,-1.3) node[right]{\tiny$E_1$};

\draw (4,-1.2) -- (4.6,-1);
\draw (4,-0.8) -- (4.6,-1);

\draw (4.6,-1) -- (5.3,-0.7);
\draw (4.6,-1) -- (5,-1.3);

\draw[->] (5.3,-0.7) -- (4,-0.5);
\draw[->] (5,-1.3) -- (4,-1.5);

\draw[fill=blue,blue] (4,-2) circle[radius=1.5pt];
\draw[blue,->] (4,-2) -- (6,-2);

\draw[blue,->] (5,-1.55) -- (5,-1.8);

\draw (5,-2.5) node{$Z_2\! \cap\! W_0$};

\draw (3,-2.25);
\end{tikzpicture}
\]
The total puncturing rank is $k_P=2$. Applying \eqref{eqn: refined class general} we obtain
\begin{align} \nonumber [\Vcal(\Tsf)]^{\refined} & = \{ c(\uprho^\star N_\upiota) \cap s(\Vcal(\Tsf),\Acal(\Tsf)) \}_{-2} \\
\label{eqn: refined class long example} & = c_1(\uprho^\star N_\upiota) \cap s(\Vcal(\Tsf),\Acal(\Tsf))_{-1} + s(\Vcal(\Tsf),\Acal(\Tsf))_{-2}.
\end{align}
We first compute the Segre class terms. Consider the blowup of $\Acal(\Tsf)$ in $W_0$
\[ \uprho \colon \Acal(\Tsf^\prime) \to \Acal(\Tsf)\]
and let $E_0 = \uprho^{-1}(W_0) \subseteq \Acal(\Tsf^\prime)$ denote the exceptional divisor. Since the $Z_i$ intersect the blowup centre transversely, their strict and total transforms coincide:
\[ \uprho^{-1}(Z_1) = Z_1^\prime, \quad \uprho^{-1}(Z_2) = Z^\prime_2.\]
From $\Vcal(\Tsf) = Z_1 \cup Z_2 \cup W_0$ we obtain $\uprho^{-1} \Vcal(\Tsf) = Z_1^\prime \cup Z_2^\prime \cup E_0$. The latter is a Cartier divisor, so:
\[ s(\uprho^{-1} \Vcal(\Tsf), \Acal(\Tsf^\prime)) = \dfrac{Z^\prime_1+Z^\prime_2+E_0}{1+(Z^\prime_1+Z^\prime_2+E_0)} = \sum_{k\geq 1} (-1)^{k-1} (Z^\prime_1+Z^\prime_2+E_0)^k.\]
Following \eqref{eqn: refined class long example} we must extract the $k=1$ and $k=2$ terms. For $k=1$ we obtain:
\begin{equation} \label{eqn: Segre example dim -1} s(\Vcal(\Tsf),\Acal(\Tsf))_{-1} = \uprho_\star (Z^\prime_1+Z^\prime_2+E_0) = Z_1+Z_2.\end{equation}
Examining the $k=2$ term we have
\[ -(Z^\prime_1+Z^\prime_2+E_0)^2 = -(\uprho^\star(Z_1+Z_2)+E_0)^2 = -\uprho^\star(Z_1+Z_2)^2 - 2\uprho^\star(Z_1+Z_2) E_0 - E_0^2 \]
and pushing forward along $\uprho$ we obtain
\begin{equation} \label{eqn: Segre example dim -2} s(\Vcal(\Tsf),\Acal(\Tsf))_{-2} = -(Z_1+Z_2)^2 + W_0 = -Z_1^2 - Z_2^2 - 2W_{12} + W_0 \end{equation}
where $W_{12} = Z_1 Z_2 = Z_1 \! \cap Z_2$ as above. Combining \eqref{eqn: Segre example dim -1} and \eqref{eqn: Segre example dim -2} with \eqref{eqn: refined class long example} we obtain:
\begin{equation} \label{eqn: halfway formula refined class long example} [\Vcal(\Tsf)]^{\refined} = c_1(\uprho^\star N_\upiota) \cap Z_1 + c_1(\uprho^\star N_\upiota) \cap Z_2 - Z_1^2 - Z_2^2 - 2W_{12} + W_0.\end{equation}
It remains to describe the Chern class terms. The rank $2$ vector bundle $\uprho^\star N_\upiota$ splits into summands associated to the piecewise linear functions $\ftrop(p_1), \ftrop(p_2)$ recording the puncturing offsets. We begin with $c_1(\uprho^\star N_\upiota) \cap Z_1$. We compare $\ftrop(p_1)$ and $\ftrop(p_2)$ to the piecewise linear function $\mathsf{z}_1$ corresponding to $Z_1$. Along the ray corresponding to $Z_1$ we have 
\[ \ftrop(p_1) = \ftrop(p_2) = 2 \mathsf{z}_1 \]
where the multiplicity accounts for the difference between source and target lengths. This identity fails on cones where the tropical map degenerates in such a way that the puncturing offset $\ftrop(p_i)$ differs from the above edge length. It suffices to consider two-dimensional cones containing the ray corresponding to $Z_1$. Together with $W_{12}$ above, we identify the following relevant cones:
\[
\begin{tikzpicture}[scale=1.3]


\draw[fill=black] (-8,-1) circle[radius=1.5pt];
\draw[blue] (-8,-1) node[left]{\tiny$2E_2$};

\draw (-8,-1) -- (-7.3,-1);
\draw[blue] (-7.65,-1.05) node[above]{\tiny$2$};

\draw[fill=black] (-7.3,-1) circle[radius=1.5pt];
\draw[blue] (-7.3,-1) node[above]{\tiny$0$};

\draw (-7.3,-1) -- (-6.6,-1);
\draw[blue] (-6.95,-0.95) node[below]{\tiny$3$};

\draw[fill=black] (-6.6,-1) circle[radius=1.5pt];
\draw[blue] (-6.6,-1) node[above]{\tiny$2E_1$};

\draw[->] (-7.3,-1) -- (-8,-1.5);
\draw (-7.9,-1.5) node[left]{\tiny$p_2$};

\draw[->] (-6.6,-1) -- (-8,-0.4);
\draw (-7.9,-0.4) node[left]{\tiny$p_1$};

\draw[fill=blue,blue] (-8,-2) circle[radius=1.5pt];
\draw[blue,->] (-8,-2) -- (-6,-2);

\draw[blue,->] (-7,-1.4) -- (-7,-1.8);

\draw (-7,-2.5) node{$W_1$};



\draw[fill=black] (-5,-1) circle[radius=1.5pt];
\draw[blue] (-5,-1) node[left]{\tiny$2E_2$};

\draw (-5,-1) -- (-4.3,-1);
\draw[blue] (-4.65,-1.05) node[above]{\tiny$2$};

\draw[fill=black] (-4.3,-1) circle[radius=1.5pt];
\draw[blue] (-4.3,-1) node[above]{\tiny$E_1$};

\draw (-4.3,-1) -- (-3.6,-1);

\draw[fill=black] (-3.6,-1) circle[radius=1.5pt];
\draw[blue] (-3.6,-1) node[above]{\tiny$E_1$};

\draw[->] (-4.3,-1) -- (-5,-1.5);
\draw (-4.9,-1.5) node[left]{\tiny$p_2$};

\draw[->] (-3.6,-1) -- (-5,-0.4);
\draw (-4.9,-0.4) node[left]{\tiny$p_1$};

\draw[fill=blue,blue] (-5,-2) circle[radius=1.5pt];
\draw[blue,->] (-5,-2) -- (-3,-2);

\draw[blue,->] (-4,-1.4) -- (-4,-1.8);

\draw (-4,-2.5) node{$W_2$};



\draw[fill=black] (-2,-1) circle[radius=1.5pt];
\draw[blue] (-2,-1) node[left]{\tiny$2E_2$};

\draw (-2,-1) -- (-1.3,-1);
\draw[blue] (-1.65,-0.95) node[below]{\tiny$2$};

\draw[fill=black] (-1.3,-1) circle[radius=1.5pt];
\draw[blue] (-1.3,-1) node[below]{\tiny$0$};

\draw (-1.3,-1) -- (-0.6,-1);
\draw[blue] (-0.95,-1.05) node[above]{\tiny$3$};

\draw[fill=black] (-0.6,-1) circle[radius=1.5pt];
\draw[blue] (-0.6,-1) node[below]{\tiny$2E_1$};

\draw[->] (-1.3,-1) -- (-2,-0.5);
\draw (-1.9,-0.5) node[left]{\tiny$p_1$};

\draw[->] (-0.6,-1) -- (-2,-1.65);
\draw (-1.9,-1.65) node[left]{\tiny$p_2$};

\draw[fill=blue,blue] (-2,-2) circle[radius=1.5pt];
\draw[blue,->] (-2,-2) -- (0,-2);

\draw[blue,->] (-1,-1.4) -- (-1,-1.8);

\draw (-1,-2.5) node{$W_3$};



\draw[fill=black] (1,-1) circle[radius=1.5pt];
\draw[blue] (1,-1) node[left]{\tiny$2E_2$};

\draw (1,-1) -- (1.7,-1);
\draw[blue] (1.35,-0.95) node[below]{\tiny$2$};

\draw[fill=black] (1.7,-1) circle[radius=1.5pt];
\draw[blue] (1.7,-1) node[below]{\tiny$E_1$};

\draw (1.7,-1) -- (2.4,-1);

\draw[fill=black] (2.4,-1) circle[radius=1.5pt];
\draw[blue] (2.4,-1) node[below]{\tiny$E_1$};

\draw[->] (1.7,-1) -- (1,-0.5);
\draw (1.1,-0.5) node[left]{\tiny$p_1$};

\draw[->] (2.4,-1) -- (1,-1.65);
\draw (1.1,-1.65) node[left]{\tiny$p_2$};

\draw[fill=blue,blue] (1,-2) circle[radius=1.5pt];
\draw[blue,->] (1,-2) -- (3,-2);

\draw[blue,->] (2,-1.4) -- (2,-1.8);

\draw (2,-2.5) node{$W_4$};

\end{tikzpicture}
\]
Comparing the piecewise linear functions, we have
\begin{align*} \OO_{Z_1}(\ftrop(p_1)) & = \OO_{Z_1}(2Z_1+W_{12}+3W_1+W_2) \\
\OO_{Z_1}(\ftrop(p_2)) & = \OO_{Z_1}(2Z_1+W_{12}+3W_3+W_4)
\end{align*}
which gives
\[ c_1(\uprho^\star N_\upiota) \cap Z_1 = 4Z_1^2 + 2W_{12} + 3W_1 + W_2 + 3W_3 + W_4. \]
Similarly, for $c_1(\uprho^\star N_\upiota) \cap Z_2$ we identify the following cones:
\[
\begin{tikzpicture}[scale=1.3]


\draw[fill=black] (-8,-0.8) circle[radius=1.5pt];
\draw[blue] (-8,-0.8) node[left]{\tiny$E_2$};

\draw[fill=black] (-8,-1.2) circle[radius=1.5pt];
\draw[blue] (-8,-1.2) node[left]{\tiny$E_2$};

\draw (-8,-0.8) -- (-7.3,-1);
\draw (-8,-1.2) -- (-7.3,-1);

\draw[fill=black] (-7.3,-1) circle[radius=1.5pt];
\draw[blue] (-7.3,-1) node[above]{\tiny$0$};

\draw (-7.3,-1) -- (-6.6,-1);
\draw[blue] (-6.95,-0.95) node[below]{\tiny$3$};

\draw[fill=black] (-6.6,-1) circle[radius=1.5pt];
\draw[blue] (-6.6,-1) node[above]{\tiny$2E_1$};

\draw[->] (-7.3,-1) -- (-8,-1.55);
\draw (-7.9,-1.55) node[left]{\tiny$p_2$};

\draw[->] (-6.6,-1) -- (-8,-0.4);
\draw (-7.9,-0.4) node[left]{\tiny$p_1$};

\draw[fill=blue,blue] (-8,-2) circle[radius=1.5pt];
\draw[blue,->] (-8,-2) -- (-6,-2);

\draw[blue,->] (-7,-1.4) -- (-7,-1.8);

\draw (-7,-2.5) node{$W_5$};



\draw[fill=black] (-5,-0.8) circle[radius=1.5pt];
\draw[blue] (-5,-0.8) node[left]{\tiny$E_2$};

\draw[fill=black] (-5,-1.2) circle[radius=1.5pt];
\draw[blue] (-5,-1.2) node[left]{\tiny$E_2$};

\draw (-5,-0.8) -- (-4.3,-1);
\draw (-5,-1.2) -- (-4.3,-1);

\draw[fill=black] (-4.3,-1) circle[radius=1.5pt];
\draw[blue] (-4.3,-1) node[above]{\tiny$E_1$};

\draw (-4.3,-1) -- (-3.6,-1);

\draw[fill=black] (-3.6,-1) circle[radius=1.5pt];
\draw[blue] (-3.6,-1) node[above]{\tiny$E_1$};

\draw[->] (-4.3,-1) -- (-5,-1.55);
\draw (-4.9,-1.55) node[left]{\tiny$p_2$};

\draw[->] (-3.6,-1) -- (-5,-0.4);
\draw (-4.9,-0.4) node[left]{\tiny$p_1$};

\draw[fill=blue,blue] (-5,-2) circle[radius=1.5pt];
\draw[blue,->] (-5,-2) -- (-3,-2);

\draw[blue,->] (-4,-1.4) -- (-4,-1.8);

\draw (-4,-2.5) node{$W_6$};



\draw[fill=black] (-2,-0.8) circle[radius=1.5pt];
\draw[blue] (-2,-0.8) node[left]{\tiny$E_2$};

\draw[fill=black] (-2,-1.2) circle[radius=1.5pt];
\draw[blue] (-2,-1.2) node[left]{\tiny$E_2$};

\draw (-2,-0.8) -- (-1.3,-1);
\draw (-2,-1.2) -- (-1.3,-1);

\draw[fill=black] (-1.3,-1) circle[radius=1.5pt];
\draw[blue] (-1.3,-1) node[below]{\tiny$0$};

\draw (-1.3,-1) -- (-0.6,-1);
\draw[blue] (-0.95,-1.05) node[above]{\tiny$3$};

\draw[fill=black] (-0.6,-1) circle[radius=1.5pt];
\draw[blue] (-0.6,-1) node[below]{\tiny$2E_1$};

\draw[->] (-1.3,-1) -- (-2,-0.45);
\draw (-1.9,-0.45) node[left]{\tiny$p_1$};

\draw[->] (-0.6,-1) -- (-2,-1.65);
\draw (-1.9,-1.65) node[left]{\tiny$p_2$};

\draw[fill=blue,blue] (-2,-2) circle[radius=1.5pt];
\draw[blue,->] (-2,-2) -- (0,-2);

\draw[blue,->] (-1,-1.4) -- (-1,-1.8);

\draw (-1,-2.5) node{$W_7$};



\draw[fill=black] (1,-0.8) circle[radius=1.5pt];
\draw[blue] (1,-0.8) node[left]{\tiny$E_2$};

\draw[fill=black] (1,-1.2) circle[radius=1.5pt];
\draw[blue] (1,-1.2) node[left]{\tiny$E_2$};

\draw (1,-0.8) -- (1.7,-1);
\draw (1,-1.2) -- (1.7,-1);

\draw[fill=black] (1.7,-1) circle[radius=1.5pt];
\draw[blue] (1.7,-1) node[below]{\tiny$E_1$};

\draw (1.7,-1) -- (2.4,-1);

\draw[fill=black] (2.4,-1) circle[radius=1.5pt];
\draw[blue] (2.4,-1) node[below]{\tiny$E_1$};

\draw[->] (1.7,-1) -- (1,-0.45);
\draw (1.1,-0.45) node[left]{\tiny$p_1$};

\draw[->] (2.4,-1) -- (1,-1.65);
\draw (1.1,-1.65) node[left]{\tiny$p_2$};

\draw[fill=blue,blue] (1,-2) circle[radius=1.5pt];
\draw[blue,->] (1,-2) -- (3,-2);

\draw[blue,->] (2,-1.4) -- (2,-1.8);

\draw (2,-2.5) node{$W_8$};

\end{tikzpicture}
\]
Comparing the piecewise linear functions, we have
\begin{align*} \OO_{Z_2}(\ftrop(p_1)) & = \OO_{Z_2}(Z_2+2W_{12}+3W_5+W_6) \\
\OO_{Z_2}(\ftrop(p_2)) & = \OO_{Z_2}(Z_2+2W_{12}+3W_7+W_8)
\end{align*}
which gives
\[ c_1(\uprho^\star N_\upiota) \cap Z_2 = 2Z_2^2 + 4W_{12} + 3W_5 + W_6 + 3W_7 + W_8. \]
Plugging into \eqref{eqn: halfway formula refined class long example} we obtain the final formula:
\[ [\Vcal(\Tsf)]^{\refined} = 3Z_1^2 + Z_2^2 + 4W_{12} + W_0 + 3W_1 + W_2 + 3W_3 + W_4 + 3W_5 + W_6 + 3W_7 + W_8.\]
This is a polynomial in strata classes. Strata denoted $Z$ have codimension $1$ while strata denoted $W$ have codimension $2$. The square terms can be described in terms of psi classes as in \Cref{example: higher rank regular embedding}.

In the preceding argument we have implicitly assumed that $\Acal(\Tsf)$ is smooth. This is false, but an examination of cones shows that the singular locus has codimension $3$. The excision sequence \cite[Chapter~1.8]{FultonBig} thus produces an identification of Chow groups
\[ A_{-2}(\Acal(\Tsf)) = A_{-2}(\Acal(\Tsf)^{\mathrm{sm}}).\]
We may therefore restrict to the smooth locus, on which the above arguments are valid.
\end{example}

\section{Statement of the comparison} \label{sec: correspondence}

\noindent We work towards the statement of our main theorem: the comparison between refined punctured Gromov--Witten theory and orbifold Gromov--Witten theory with extremal ages.

\subsection{Orbifold theory} \label{sec: orbifold theory} The refined punctured theory will be identified with the orbifold theory. To define the latter, we enhance the numerical data $\Lambda$ in \Cref{def: numerical data} as follows.
\begin{definition} \label{def: rooting data} \textbf{Rooting data} enhancing $\Lambda$ consist of a choice of:
\begin{itemize}
\item \textbf{Target rooting parameters.} Positive integers $(r_1,\ldots,r_k) \in \Z_{\geqslant 1}^k$.
\item \textbf{Source rooting parameters.} Positive integers $(s_1,\ldots,s_n) \in \Z_{\geqslant 1}^n$.
\end{itemize} 
Given a marking $i \in [n]$ let $J(i) \subseteq [k]$ denote the set of divisors which the marking is tangent to
\[ J(i) \colonequals \{ j \in [k] : \upalpha_{ij} \neq 0\}\]
and for any subset $J \subseteq [k]$ let $r_J$ denote the product of target rooting parameters: 
\[ r_J \colonequals \Pi_{j \in J} r_j.\]
We impose the following conditions on the rooting data:
\begin{enumerate}
\item \textbf{Divisibility.} For each marking $i \in [n]$ and divisor $j \in J(i)$ we have
\begin{align*} r_j \mid \upalpha_{ij} s_i.
\end{align*}
\item \textbf{Coprimality.} For each marking $i \in [n]$ we have
\[ \operatorname{lcm}\!\big( r_j/ \operatorname{gcd}(r_j,\upalpha_{ij}) \, \colon \, j \in J(i)\big)= s_i.\]
\item \textbf{Size.} For each marking $i \in [n]$ and divisor $j \in J(i)$ we have
\[ r_j > |\upalpha_{ij}|.\]
\end{enumerate}
\end{definition}
\begin{remark}
At each marking the homomorphism on isotropy groups is 
\begin{align*}
\upmu_{s_i} & \to \prod_{j \in J(i)} \upmu_{r_j} \\	
\upzeta_{s_i} & \mapsto \big(\upzeta_{r_j}^{\upalpha_{ij}}\big)_{j \in J(i)} = \big(\upzeta_{s_i}^{\widetilde{\upalpha}_{ij}}\big)_{j\in J(i)} \end{align*}
where $\widetilde{\upalpha}_{ij} \colonequals \upalpha_{ij} s_i/r_j$ is the gerby tangency (see~\Cref{sec: tropical twisted punctured maps}). This homomorphism is well-defined by the divisibility condition, and injective by the coprimality condition. A posteriori we obtain
\[ s_i \mid r_{J(i)} \]
which also follows numerically from the coprimality condition. The size condition ensures that the homomorphism on isotropy encodes the tangency, rather than only the tangency modulo $r$. Note that coprimality determines each $s_i$ in terms of the $r_j$ and $\upalpha_{ij}$.
\end{remark}
In contrast to the numerical data (\Cref{def: numerical data}), the rooting data (\Cref{def: rooting data}) is auxiliary, and we fix it once and for all. When dealing with the orbifold theory we always assume that rooting data has been chosen, and all the results we establish will be independent of that choice. A valid choice is obtained by taking $r_1,\ldots,r_k$ to be distinct large primes, and $s_i=r_{J(i)}$.

We use the same symbol $\Lambda$ to denote the numerical data enhanced with the auxiliary rooting data. This determines moduli spaces of orbifold maps:
\[
\begin{tikzcd}
 \Orb_\Lambda(X_{\bm{r}}) \ar[r,"\upmu"] & \Orb_\Lambda(\Acal_{\bm{r}}^k) \ar[r,"\upnu"] & \Mfrak^{\tw}(\Bcal \mathbb{G}^k_{\mathrm{m},\bm{r}}).
\end{tikzcd}
\]
The tangencies are encoded in the twisted sectors at the special points. The age of the $j$th universal line bundle at a marking $i$ is $\upalpha_{ij}/r_j$ (mod $1$). More precisely: 
\[ \operatorname{age}_i \Lcal_j = \begin{cases} \upalpha_{ij}/r_j \qquad & \text{if $\upalpha_{ij} \geqslant 0$,} \\ 1+ \upalpha_{ij}/r_j \qquad & \text{if $\upalpha_{ij} < 0$}. \end{cases} \]
The base $\Mfrak^{\tw}(\Bcal \mathbb{G}^k_{\mathrm{m},\bm{r}})$ is smooth \cite[Section~5.2]{AbramovichCadmanWise}, while the morphisms $\upmu$ and $\upnu$ are equipped with perfect obstruction theories
\begin{align*} \EE_\upmu & = (\Rder \uppi_\star f^\star T_{X_{\bm{r}}|D_{\bm{r}}})^\vee \\
\EE_\upnu & = \oplus_{j=1}^k (\Rder \uppi_\star	\Lcal_j)^\vee
\end{align*}
where $T_{X_{\bm{r}}|D_{\bm{r}}}$ denotes the logarithmic tangent bundle of the pair $(X_{\bm{r}}|D_{\bm{r}})$, i.e. the relative tangent bundle of the smooth morphism $X_{\bm{r}}\to\Acal_{\bm{r}}^k$ encoding the normal crossings divisor $D_{\bm{r}}$.

\begin{remark} In the literature it is sometimes assumed that the target rooting parameters are pairwise coprime. This simplifies the numerics, but is not necessary for stating or proving the correspondence theorems. In certain contexts it is desirable to relax this assumption, for instance when investigating birational invariance (see \Cref{sec: birational invariance}). Consider the blowup of a pair $(X|D_1\!+\!D_2)$ in the stratum $D_1 \cap D_2$. This gives a morphism of pairs
\[ (X^\prime|D_0^\prime\!+\!D_1^\prime\!+\!D_2^\prime) \to (X|D_1\!+\!D_2)\]
with $D_0^\prime$ the exceptional divisor and $D_1^\prime,D_2^\prime$ the strict transforms. Fix target rooting parameters $(r_0,r_1,r_2)$. Then the induced morphism on root stacks
\[ X^\prime_{(r_0,r_1,r_2)} \to X_{(r_1,r_2)} \]
only exists if $r_1$ and $r_2$ both divide $r_0$.
\end{remark}

\subsection{Blowups and numerical data} \label{sec: lifted numerical data} 

Fix a simple normal crossings pair $(X|D)$. An \textbf{iterated blowup} is a logarithmic modification
\[ (X^\prime|D^\prime) \to (X|D) \]
which factors into a sequence of blowups along closed strata. These correspond to iterated stellar subdivisions $\Sigma^\prime \to \Sigma$ of the tropicalisation. The system of iterated stellar subdivisions is cofinal in the system of all subdivisions \cite[Chapter~1.7]{Oda}. Since we focus on limiting behaviour, it is sufficient to restrict to this subsystem.

Given an iterated blowup, there is a cone map $\mathsf{r} \colon \Rplus^{k^\prime} \to \Rplus^k$ recording the pullbacks of boundary divisors. This commutes with the global framings:
\[
\begin{tikzcd}
\Sigma^\prime \ar[r] \ar[d] & \Rplus^{k^\prime} \ar[d,"\mathsf{r}"] \\
\Sigma \ar[r] & \Rplus^k.
\end{tikzcd}
\]

\begin{definition}
Fix an iterated blowup $\uprho \colon (X^\prime|D^\prime) \to (X|D)$ and numerical data $\Lambda^\prime$ for a moduli space of punctured maps to $(X^\prime|D^\prime)$. The \textbf{pushed forward numerical data} $\Lambda \colonequals \uprho_\star \Lambda^\prime$ is defined as follows.

The genus is taken to be the same (i.e. $g \colonequals g^\prime$) and the curve class is taken to be the pushforward (i.e. $\upbeta \colonequals \uprho_\star \upbeta^\prime$). Given a marking $i \in [n]$ the vector of contact orders is taken to be the pushforward:
\[ (\upalpha_{ij})_{j=1}^k \colonequals \mathsf{r} (\upalpha^\prime_{ij})_{j=1}^{k^\prime}. \]
\end{definition}

\begin{remark} \label{rmk: contact data lift not unique} Fix numerical data $\Lambda$ and consider the set of lifts $\Lambda^\prime$. In the positive contact setting, the vector of contact orders $(\upalpha_{ij})_{j=1}^k$ can be identified with a point in $\Sigma(\N)$. The identification
\[ \Sigma^\prime(\N)=\Sigma(\N) \]
then ensures that the lifting of contact orders is unique. From this we also deduce the uniqueness of the lifted curve class, see \cite[Section~3.5]{BNR2}. It follows that $\Lambda$ admits a \emph{unique} lift $\Lambda^\prime$.

In the negative contact setting, this is no longer the case. Conceptually this occurs because $\Sigma^\prime(\Z)$ is larger than $\Sigma(\Z)$. Suppose for simplicity that $\Sigma=\Rplus^k$. Given a puncturing $p \in P$ let $J(p) \subseteq [k]$ denote the set of divisor components to which $p$ has contact
\[ J(p) = \{ j \in [k] : \upalpha_{pj} \neq 0\}.\]
Suppose that the associated cone $\upsigma_{J(p)} \in \Sigma$ is subdivided into multiple cones via $\Sigma^\prime \to \Sigma$. There are then different choices of lifted cone, giving rise to different lifted contact orders. An example is illustrated below.
\[
\begin{tikzpicture}[scale=0.8]
\draw[fill=black] (-7,0) circle[radius=1.5pt];
\draw[->] (-7,0) -- (-4,0);
\draw (-4,0) node[right]{\small$D_1$};
\draw[->] (-7,0) -- (-7,3);
\draw (-7,3) node[above]{\small$D_2$};

\draw[blue,->] (-6.25,1.25) -- (-7,2);
\draw[blue] (-6.15,1.15) node{\tiny$p$};

\draw (-5.5,-1) node{\small$\Lambda : (\upalpha_{pj})_{j=1}^2 = (-1,1)$};
	
\draw[fill=black] (0,0) circle[radius=1.5pt];
\draw[->] (0,0) -- (3,0);
\draw (3,0) node[right]{\small${D}^\prime_1$};
\draw[->] (0,0) -- (0,3);
\draw (0,3) node[above]{\small${D}^\prime_2$};

\draw[->] (0,0) -- (2.5,2.5);
\draw (2.7,2.7) node{\small$D^\prime_0$};

\draw[blue,->] (2,1) -- (1.5,1.5);
\draw[blue] (2.1,0.9) node{\tiny$p$};

\draw (1.5,-1) node{\small$\Lambda^\prime_1 : (\upalpha^\prime_{pj})_{j=0}^2 = (1,-2,0)$};

\draw[fill=black] (7,0) circle[radius=1.5pt];
\draw[->] (7,0) -- (10,0);
\draw (10,0) node[right]{\small${D}^\prime_1$};
\draw[->] (7,0) -- (7,3);
\draw (7,3) node[above]{\small${D}^\prime_2$};

\draw[->] (7,0) -- (9.5,2.5);
\draw (9.7,2.7) node{\small$D^\prime_0$};

\draw[blue,->] (7.7,1.8) -- (7,2.5);
\draw[blue] (7.8,1.7) node{\tiny$p$};

\draw (8.5,-1) node{\small$\Lambda^\prime_2 : (\upalpha^\prime_{pj})_{j=0}^2 = (-1,0,2)$};
\end{tikzpicture}
\]
This failure of uniqueness is expected. Each negative contact moduli space appears as a boundary stratum in a positive contact moduli space. A logarithmic modification of the target induces a logarithmic modification of the positive contact moduli space, under which a single stratum may be dominated by multiple strata. 

Alternatively: in the positive contact setting, the lift $\Lambda^\prime$ formally computes the strict transform of a generic curve with numerical data $\Lambda$; in the negative contact setting, it can occur that all such curves map into the blowup centre, rendering the strict transform meaningless.
\end{remark}

Given numerical data $\Lambda$ and an iterated blowup $(X^\prime|D^\prime) \to (X|D)$, we fix once and for all numerical data $\Lambda^\prime$ pushing forward to $\Lambda$. The main theorem applies to any choice of such data.

\subsection{Slope-sensitivity} \label{sec: slope-sensitivity} Fix numerical data $\Lambda$ for a moduli space of punctured maps to $(X|D)$. We define $\Lambda$-sensitive iterated blowups, following \cite[Section~4]{BNR2}. For each subset $J \subseteq [k]$ of size two, consider the pair $(X|D_J)$ and write
\[ \Sigma_J \colonequals \Sigma(X|D_J).\]
The numerical data $\Lambda$ induces numerical data $\Lambda_J$ giving a moduli space of stable punctured maps $\Punct_{\Lambda_J}(X|D_J)$. This decomposes into strata indexed by tropical types of maps to $\Sigma_J$. We assemble all the slopes $m_{\vec{e}}$ which appear in these tropical types, and which also belong to the nonnegative orthant of their associated cone, meaning that they have nonnegative coordinates with respect to the lattice basis consisting of the ray generators. Fix an iterated stellar subdivision
\[ \Sigma_J^\prime \to \Sigma_J \]
which contains all the rays generated by these slopes (such subdivisions always exist). Since $\Sigma$ is smooth there is a canonical projection
\[ \ptrop_J \colon \Sigma \to \Sigma_J\]
and the subdivision $\Sigma_J^\prime \to \Sigma_J$ pulls back to a subdivision $\ptrop_J^\star \Sigma_J^\prime \to \Sigma$.

\begin{definition} Consider an iterated blowup $(X^\prime|D^\prime) \to (X|D)$ corresponding to an iterated stellar subdivision $\Sigma^\prime \to \Sigma$. This iterated blowup is \textbf{$\Lambda$-sensitive} (or \textbf{slope-sensitive}) if and only if the subdivision $\Sigma^\prime \to \Sigma$ simultaneously refines $\ptrop_J^\star \Sigma_J^\prime \to \Sigma$ for every subset $J \subseteq [k]$ of size two.
\end{definition}

Slope-sensitive subdivisions always exist, are stable under further blowups, and are cofinal in the system of all iterated blowups \cite[Lemma~4.1]{BNR2}.
 
\subsection{Main theorem} \label{sec: main theorem}
Fix a simple normal crossings pair $(X|D)$ and numerical data $\Lambda$ for a moduli space of stable punctured maps in genus zero. For each divisor component $j \in [k]$, let $n_j$ denote the number of markings $i \in [n]$ with $\upalpha_{ij} < 0$.

\begin{theorem}[\Cref{thm: main snc pairs introduction}] \label{thm: main} For each $\Lambda$-sensitive blowup $(X^\prime|D^\prime) \to (X|D)$ the refined punctured Gromov--Witten theory of $(X^\prime|D^\prime)$ coincides with the orbifold Gromov--Witten theory of $X^\prime_{\bm{r}^\prime}$ up to a rooting factor. Precisely, there is a diagram of moduli spaces:
\[
\begin{tikzcd}[column sep=small]
\Punct_{\Lambda^\prime}(X^\prime|D^\prime) \ar[rd,"\uptheta" below] && \NPunctOrb_{\Lambda^\prime}(X^\prime_{\bm{r}^\prime}|D^\prime_{\bm{r}^\prime}) \ar[ld,"\upalpha"] \ar[rd,"\upomega" below] && \Orb_{\Lambda^\prime}(X^\prime_{\bm{r}^\prime}) \ar[ld,"\upeta"] \\
& \NPunct_{\Lambda^\prime}(X^\prime|D^\prime) && \NOrb_{\Lambda^\prime}(X^\prime_{\bm{r}^\prime})	
\end{tikzcd}
\]
satisfying the following virtual pushforward identities
\begin{align*}
\uptheta_\star[\Punct_{\Lambda^\prime}(X^\prime|D^\prime)]^{\refined} & = [\NPunct_{\Lambda^\prime}(X^\prime|D^\prime)]^{\refined} \\
\upalpha_\star[\NPunctOrb_{\Lambda^\prime}(X^\prime_{\bm{r}^\prime}|D^\prime_{\bm{r}^\prime})]^{\refined} & = \Pi_{j=1}^{k^\prime} (r_{j}^\prime)^{-n_j^\prime} [\NPunct_{\Lambda^\prime}(X^\prime|D^\prime)]^{\refined} \\
\upomega_\star[\NPunctOrb_{\Lambda^\prime}(X^\prime_{\bm{r}^\prime}|D^\prime_{\bm{r}^\prime})]^{\refined} & = [\NOrb_{\Lambda^\prime}(X^\prime_{\bm{r}^\prime})]^{\virt} \\
\upeta_\star[\Orb_{\Lambda^\prime}(X^\prime_{\bm{r}^\prime})]^{\virt} & = [\NOrb_{\Lambda^\prime}(X^\prime_{\bm{r}^\prime})]^{\virt}
\end{align*}
\end{theorem}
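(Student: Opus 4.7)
The plan is to bootstrap the theorem from the smooth pair case \Cref{thm: main smooth pairs introduction} using $\Lambda^\prime$-sensitivity to localise the analysis divisor-by-divisor. The architecture parallels the positive tangency bootstrap of \cite{BNR2}, but now both sides carry refined virtual classes rather than ordinary fundamental classes, and the comparisons take place on moduli spaces that are reducible and non-equidimensional already at the level of universal targets.

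I would begin by separating the four pushforward identities. The relation $\upeta_\star[\Orb_{\Lambda^\prime}(X^\prime_{\bm{r}^\prime})]^{\virt} = [\NOrb_{\Lambda^\prime}(X^\prime_{\bm{r}^\prime})]^{\virt}$ is a standard forgetful-gerbe argument for twisted stable maps, as in \cite{AbramovichCadmanWise}. The relation $\uptheta_\star[\Punct_{\Lambda^\prime}(X^\prime|D^\prime)]^{\refined} = [\NPunct_{\Lambda^\prime}(X^\prime|D^\prime)]^{\refined}$ is its counterpart on the punctured side; it should follow from compatibility of the refined virtual class with forgetful maps between tropical moduli, since both classes are ultimately defined by pulling back the fundamental class of $\Acal(\Tsf^\prime)$ along the regular embedding $\upiota \colon \Bcal\Gm^{k_P^\prime}\hookrightarrow \Acal^{k_P^\prime}$. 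The identity $\upomega_\star[\NPunctOrb_{\Lambda^\prime}]^{\refined} = [\NOrb_{\Lambda^\prime}]^{\virt}$ is the transfer of the strong isomorphism \Cref{thm: spaces same} and obstruction-theoretic identification \Cref{thm: classes same} from universal targets to the geometric situation, which is formal once the smooth pair case is in hand, using virtual pullback through the strict and virtually smooth map to universal Artin targets.

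The main work is the identity $\upalpha_\star [\NPunctOrb_{\Lambda^\prime}(X^\prime_{\bm{r}^\prime}|D^\prime_{\bm{r}^\prime})]^{\refined} = \Pi_{j=1}^{k^\prime} (r_j^\prime)^{-n_j^\prime} [\NPunct_{\Lambda^\prime}(X^\prime|D^\prime)]^{\refined}$. Here $\Lambda^\prime$-sensitivity enters decisively. For each pair $J \subseteq [k^\prime]$ of divisor indices, the projection $\ptrop_J \colon \Sigma^\prime \to \Sigma^\prime_J$ gives morphisms of moduli spaces, and sensitivity guarantees that minimal tropical types of maps to $(X^\prime|D^\prime)$ project to minimal tropical types of maps to the smoothly-bounded pair $(X^\prime|D^\prime_J)$ without dimensional collapse along any slope generating a puncturing direction. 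The smooth pair case applied componentwise yields, at each puncture meeting $D^\prime_j$, a factor of $(r_j^\prime)^{-1}$, and these aggregate to the claimed rooting factor $\Pi_j (r_j^\prime)^{-n_j^\prime}$. To globalise, I would express both refined classes as pullbacks of the ambient refined intersections $\upiota^![\Acal(\Tsf^\prime)]$, verify that the Cartesian diagram \eqref{eqn: diagram puncturing substack as fibre product} is preserved by $\upalpha$ at the tropical level, and then transport the rooting factor through the virtual pullbacks defining $[\NPunct]^{\refined}$ and $[\NPunctOrb]^{\refined}$.

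The main obstacle lies precisely in this globalisation step. Unlike in the positive tangency setting of \cite{BNR2}, the refined virtual class does not decompose naturally into contributions from minimal tropical types: it is an intrinsically global object on a non-equidimensional moduli space. The $\Lambda^\prime$-sensitivity is a tropical, strata-local hypothesis, and bridging it to a statement about global refined classes requires careful intersection-theoretic bookkeeping on the Artin fan, in the spirit of \Cref{sec: Aluffi}. A secondary challenge is that the failure of birational invariance discussed in \Cref{sec: birational invariance} shows that the refined class can change under further blowup, so the proof cannot proceed by passing to an arbitrarily refined model and comparing there; one must verify that the comparison is realised exactly at the $\Lambda^\prime$-sensitive level. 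My expectation is that this is controlled by the key feature that both $\NPunct$ and $\NPunctOrb$ fit into compatible fibre squares over the same ambient regular embedding $\Bcal\Gm^{k_P^\prime} \hookrightarrow \Acal^{k_P^\prime}$, so that $\upiota^!$ commutes with $\upalpha_\star$ up to the explicit rooting factor recording the degrees of the induced gerbe structures at the punctures.
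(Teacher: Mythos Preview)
You have misidentified which of the four identities carries the main work, and in particular where $\Lambda^\prime$-sensitivity enters. In the paper's proof (\Cref{thm: correspondence snc}), the identities for $\upalpha$, $\upomega$, and $\upeta$ are established \emph{without} any sensitivity hypothesis: the naive spaces $\NPunct$, $\NPunctOrb$, $\NOrb$ are defined as fibre products over $\Mfrak$ of the smooth-pair moduli spaces indexed by $j\in[k^\prime]$, and their refined classes are defined by diagonal pullback of the external product of the smooth-pair classes. The identities for $\upalpha$ and $\upomega$ then follow immediately from the smooth-pair case (\Cref{thm: correspondence smooth pairs}) applied factor-by-factor, together with commutativity of Gysin pullback with proper pushforward; the rooting factor $\Pi_j (r_j^\prime)^{-n_j^\prime}$ is simply the product of the smooth-pair factors $r_j^{-|P_j|}$. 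The identity for $\upeta$ is the product formula for orbifold targets \cite{AJTProducts}, not a forgetful-gerbe argument.

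The genuine difficulty, and the only place $\Lambda^\prime$-sensitivity is used, is the identity for $\uptheta$, which you dismiss as a formal compatibility. The naive refined class lives on $\NPunct_{\Lambda^\prime}$, which sits over the \emph{naive Artin fan} $\NAcal(\Tsf_{\Lambda^\prime})$ --- a fibre product of the smooth-pair Artin fans over $\Acal(\MTrop)$, which is not even fs in general --- whereas the punctured refined class lives over $\Acal(\Tsf_{\Lambda^\prime})$. The key geometric input (\Cref{prop: Artin and naive Artin birational}) is that the integralisation/saturation map $\Acal(\Tsf_{\Lambda^\prime})\to\NAcal(\Tsf_{\Lambda^\prime})$ is proper and birational, and this is proved by \emph{positivising} the numerical data (\Cref{sec: positivised numerical data}), which identifies the Artin fans with those of a positive-tangency problem, and then invoking the positive-tangency rank reduction \cite[Theorem~5.1]{BNR2}. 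Sensitivity is needed precisely because it is inherited by $\Lambda_+^\prime$ and makes that positive-tangency result applicable. Your proposed route via projections $\ptrop_J$ and minimal tropical types does not appear in the argument and would run into exactly the obstacle you anticipate: the refined class is global and does not decompose strata-wise. The positivisation trick is what circumvents this.
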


Every moduli space appearing in the above diagram admits a forgetful morphism to the space of \textbf{stable maps with refined insertions}
\[ \Msf_{0,n,\upbeta}(X^\prime) \times_{X^{\prime n}} \Pi_{i=1}^n D^\prime_{J(i)} \]
where $\Msf_{0,n,\upbeta}(X^\prime)$ is the space of ordinary stable maps, and
\[ D^\prime_{J(i)} = \bigcap_{j \in J(i)} D^\prime_j \]
is the closed stratum of $X^\prime$ to which the marking $i \in [n]$ is constrained by the tangency conditions. This allows us to compare invariants with arbitrary evaluation and cotangent line classes.

\section{Twisted punctured maps} \label{sec: twisted punctured moduli}

\noindent In the next two sections we establish the correspondence for smooth pairs.

\begin{theorem}[\Cref{thm: main smooth pairs introduction}] \label{thm: correspondence smooth pairs} Let $(X|D)$ be a smooth pair. In genus zero, the refined sector of the punctured Gromov--Witten theory coincides with the orbifold Gromov--Witten theory. Precisely, there is a correspondence
\[
\begin{tikzcd}
& \PunctOrb_\Lambda(X_r|D_r) \ar[ld,"\upalpha"] \ar[rd,"\upomega" {yshift=-0.35cm,xshift=-0.3cm}] & \\
\Punct_\Lambda(X|D) & & \Orb_\Lambda(X_r).
\end{tikzcd}
\]
satisfying the following virtual pushforward identities
\begin{align*}
\upalpha_\star[\PunctOrb_\Lambda(X_r|D_r)]^{\refined} & = r^{-|P|} [\Punct_\Lambda(X|D)]^{\refined} \\[0.2cm]
\upomega_\star[\PunctOrb_\Lambda(X_r|D_r)]^{\refined} & = [\Orb_\Lambda(X_r)]^{\virt}
\end{align*}
and such that $\upalpha$ induces an isomorphism on coarse spaces, and $\upomega$ is an isomorphism.
\end{theorem}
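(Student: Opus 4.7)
The plan is to route both sides of the correspondence through the chimera moduli space $\PunctOrb_\Lambda(X_r|D_r)$, which simultaneously carries the punctured logarithmic structure and the orbifold structure at the marked points. Following the blueprint announced in Section~\ref{sec: subtleties of negative tangency}, I would first construct the tropical chimera — tropical punctured maps enhanced at each puncture by an $r$-th root datum compatible with the contact order — and then define the algebraic chimera as a fibre product of Artin fans. Over this fibre product I would hand-verify the existence of a universal family by gluing the universal log-punctured curve and the universal orbifold curve along their coarse spaces, using the compatibility of the two structures at the marked points.

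The next step is the analysis of the two projections. For $\upomega$, I would prove it is an actual \emph{isomorphism} of stacks, not merely a virtual birational map. The key ingredient is to equip $\Orb_\Lambda(X_r)$ with a new logarithmic structure — distinct from the pullback from the moduli of twisted curves — that records precisely the puncturing data; the fact that the $j$-th age at each marking is rigid modulo $1$ means this log structure is uniquely determined by the orbifold data. For $\upalpha$, I would identify the kernel of the forgetful morphism with the automorphisms of the $\mu_r$-gerbes at the punctures, realising $\upalpha$ as a composition of $B\mu_r$-gerbes (one at each puncture) with an isomorphism on coarse moduli. The rooting factor $r^{-|P|}$ in the first virtual pushforward identity then emerges directly from the degree of this gerbe structure.

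With the spaces understood, the remaining content is the comparison of virtual classes, and I would carry it out by descending to the universal targets. Concretely, I would construct a chimera $\PunctOrb_\Lambda(\Acal_r|\Dcal_r)$, prove the analogue of \Cref{thm: correspondence smooth pairs} for the universal targets, and then propagate to $(X|D)$ and $X_r$ by virtual pullback along the smooth and virtually smooth structure maps. Both at the universal and geometric levels, the refined class on the chimera is defined via a Cartesian square cutting out the puncturing substack as an intersection of Cartier divisors, while the orbifold virtual class is built from higher direct images of the universal gerby line bundles $\Lcal_j$. To bridge these two descriptions, I would employ the positivised numerical data trick of Section~\ref{sec: subtleties of negative tangency}: embed the negative-contact chimera into an auxiliary positive-contact moduli where the logarithmic and orbifold line bundles are identified via the standard dictionary, and then restrict this identification to the negative-contact stratum, tracking the finite rooting contribution.

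The main obstacle is precisely this comparison of obstruction theories at the universal target. In the positive-contact setting one can finish the argument after matching generic strata, because the universal-target moduli are smooth; here both $\Punct_\Lambda(\Acal|\Dcal)$ and $\Orb_\Lambda(\Acal_r)$ are reducible and non-equidimensional, carrying essential virtual structure, so matching strata birationally is no longer sufficient. The careful identification of the two obstruction complexes on the chimera — performed through the positivisation embedding, together with a bookkeeping of line bundles arising from logarithmic versus orbifold structures around each puncture — is where the technical heart of the proof lies, and once achieved the global identities of the theorem follow by functoriality of virtual pullback and smooth pullback from the definitions of the refined virtual classes.
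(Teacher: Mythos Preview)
Your proposal is correct and follows essentially the same architecture as the paper: tropical-first construction of the chimera as a fibre product over Artin fans, proof that $\upomega$ is an isomorphism via a new logarithmic structure on the orbifold moduli (carried out at the universal target $\Acal_r$ rather than $X_r$, then bootstrapped), comparison of obstruction theories through the positivisation embedding, and descent from universal to geometric targets by virtual pullback. One small refinement: the paper's derivation of the $r^{-|P|}$ factor (\Cref{prop: map between puncturing substacks}) does not proceed directly by reading off a gerbe degree but via a somewhat delicate push-pull through a cubical diagram relating $\Vcal(\Tsf_r)$ and $\Vcal(\Tsf)$, since the relevant squares are only cartesian after interposing auxiliary root stacks and subdivisions; your heuristic is morally right but the actual bookkeeping is more involved.
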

We first construct the space $\PunctOrb_\Lambda(X_r|D_r)$, which we refer to as the \textbf{chimera}, and establish its key properties (\Cref{sec: twisted punctured moduli}). We then prove the virtual pushforward identities (\Cref{sec: proof smooth}). Throughout we make extensive use of the universal smooth pair, given by $[\Aone/\Gm]$ with boundary divisor $\Bcal \Gm$. We denote this pair by $(\Acal|\Dcal)$. 

The chimera will be built ``tropical first'': we first construct the analogous tropical moduli space, and then use it to produce the desired algebraic moduli space. We emphasise that the construction of the chimera works in all genera. The genus zero assumption only enters in~\Cref{sec: proof smooth} in the proof of the pushforward identities. 

\subsection{Tropical twisted curves} \label{sec: twisted tropical curves} 

\begin{definition} \label{def: type of twisted tropical curve} A \textbf{type of twisted tropical curve} consists of the following data:
\begin{itemize}
\item \textbf{Coarse graph.} A finite graph $\Gamma$ consisting of vertices $V(\Gamma)$, edges $E(\Gamma)$, and labelled marking legs $L(\Gamma)$. Each vertex $v \in V(\Gamma)$ is equipped with a genus label $g_v \in \N$.	
\item \textbf{Rooting data.} A positive rooting parameter associated to every edge and marking leg:
\[ s \colon E(\Gamma) \sqcup L(\Gamma) \to 	\Z_{\geqslant 1}.\]
\end{itemize}
Given a type $\widetilde{\Gamma}$ of twisted tropical curve, we consider the associated cone
\[ \uptau_{\widetilde\Gamma} \colonequals \Rplus^{E(\Gamma)}.\]
A specialisation of types $\widetilde\Gamma \rightsquigarrow \widetilde\Gamma^\prime$ is obtained by contracting a (possibly empty) subset of the edges and composing with an automorphism of the resulting type, and induces a face morphism $\uptau_{\widetilde\Gamma^\prime} \hookrightarrow \uptau_{\widetilde\Gamma}$. Ranging over all specialisations, we obtain a diagram of cones connected by face morphisms. The \textbf{moduli space of twisted tropical curves} 
\[ \MTropOrb\]
is the colimit of this diagram in the category of cone stacks.
\end{definition}

This moduli space decomposes into connected components, obtained by fixing the genus, number of legs, and rooting parameters along the legs. Note that even these connected components are not of finite type, as there is no restriction on the rooting parameters along the edges. In applications we typically restrict to a finite type substack, justified since spaces of maps to geometric targets are of finite type.

\begin{remark}
The cone stack $\MTropOrb$ can also be described as a moduli stack, following \cite{CavalieriChanUlirschWise}. Given a strictly convex rational polyhedral cone $\upsigma$ we let $Q_\upsigma$ denote the associated dual monoid. A \textbf{twisted tropical curve} over $\upsigma$ consists of a type of twisted tropical curve (as in \Cref{def: type of twisted tropical curve}) together with gerby edge length assignments  (the adjective is justified by comparison with the underlying coarse curve, see \Cref{construction: map twisted tropical curves to tropical curves}):
\[ \tilde\ell \colon E(\Gamma) \to Q_{\upsigma} \! \setminus \! \{0\}.\]
The objects of $\MTropOrb$ over $\upsigma$ consist of the twisted tropical curves over $\upsigma$. Morphisms in $\MTropOrb$ correspond to specialisations of edge length parameters, as in \cite[Definition~3.3]{CavalieriChanUlirschWise}. This defines a category fibred in groupoids over the category of strictly convex rational polyhedral cones, and this category forms a stack with respect to the face topology.
\end{remark}

\begin{construction} \label{construction: map twisted tropical curves to tropical curves} We construct a morphism of cone stacks
\begin{equation} \label{eqn: morphism twisted tropical curves to tropical curves} \MTropOrb \to \MTrop \end{equation}
as follows. Each type $\widetilde\Gamma$ of twisted tropical curve induces a type $\Gamma$ of tropical curve (the \textbf{coarsening}) by forgetting the rooting parameters. The associated cones are both orthants of dimension $E(\Gamma)$. We coordinatise them as follows:
\[ \uptau_{\widetilde\Gamma} = (\tilde\ell_e)_{e \in E(\Gamma)}, \qquad \uptau_\Gamma = (\ell_e)_{e \in E(\Gamma)}.\]
We refer to the $\tilde\ell_e$ as \textbf{gerby edge lengths} and the $\ell_e$ as \textbf{coarse edge lengths}. These are related by scaling by the rooting parameter:
\[ s_e \tilde\ell_e = \ell_e. \]
This identity determines a morphism $\uptau_{\widetilde\Gamma} \to \uptau_{\Gamma}$. These morphisms are compatible under specialisation, and hence glue to produce the morphism \eqref{eqn: morphism twisted tropical curves to tropical curves}.\qed
\end{construction}

There is an algebraic stack $\Mfrak^{\tw}$ parameterising twisted curves, see for instance~\cite{AbramovichVistoli,OlssonLogTwisted,AbramovichLectures}. The stack $\Mfrak^{\tw}$ admits a morphism 
\[ \Mfrak^{\tw} \to \Acal(\MTropOrb) \]
induced by the boundary stratification. The following proof is useful for understanding the geometry of these spaces.

\begin{lemma} \label{lem: twisted curves as log modification of curves} The following diagram is cartesian:
\[
\begin{tikzcd}
\Mfrak^{\tw} \ar[r] \ar[d] \ar[rd,phantom,"\square"] & 	\Mfrak \ar[d] \\
\Acal(\MTropOrb) \ar[r] & \Acal(\MTrop).
\end{tikzcd}
\]
\end{lemma}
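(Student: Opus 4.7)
The plan is to verify the cartesian property locally on the source, using the standard fact that both $\Mfrak \to \Acal(\MTrop)$ and $\Mfrak^{\tw} \to \Acal(\MTropOrb)$ are smooth and encode the boundary stratifications, together with Olsson's description of twisted curves as root stacks at the nodes \cite[cf.][]{AbramovichLectures}. Since smoothness is stable under base change, it suffices to identify $\Mfrak^{\tw}$ with the fibre product as stacks over a common atlas, and this reduces to a purely local assertion near a node.

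First I would fix a geometric point of $\Mfrak^{\tw}$ corresponding to a twisted prestable curve $C$ with dual graph $\Gamma$ and rooting vector $(s_e)_{e\in E(\Gamma)}$, and choose a versal deformation of the underlying coarse curve $\ul{C}$. Locally in the smooth topology on $\Mfrak$, this versal deformation exhibits the morphism $\Mfrak \to \Acal(\MTrop)$ as the map to $\Acal^{E(\Gamma)}$ whose $e$-th component sends $\ul C$ to its smoothing parameter $t_e$ of the node indexed by $e$ (the $e$-th node is locally modeled as $\Spec \kfield\llbracket x,y,t_e\rrbracket/(xy-t_e)$). Independently, by Construction~\ref{construction: map twisted tropical curves to tropical curves}, the morphism $\Acal(\MTropOrb)\to\Acal(\MTrop)$ is locally given, in matching coordinates, by the componentwise $s_e$-th power map $\Acal^{E(\Gamma)}\to\Acal^{E(\Gamma)}$, since $s_e \tilde\ell_e=\ell_e$ on cones dualises to raising the universal coordinate to the $s_e$-th power.

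Next I would compute the fibre product $\Acal(\MTropOrb)\times_{\Acal(\MTrop)}\Mfrak$ directly in this local chart. Because pullback along $\Acal\to\Acal$, $t\mapsto t^{s}$, is by construction the universal $s$-th root of the tautological line bundle with its section, this fibre product extracts precisely the stack of prestable curves equipped with, at each node $e$, an $s_e$-th root of $(t_e)$ in the sense of Cadman. By Olsson's characterisation of twisted curves \cite[]{OlssonLogTwisted} (see also~\cite{AbramovichVistoli}), this is exactly the local presentation of $\Mfrak^{\tw}$ at the chosen point: a twisted prestable curve is the same data as its coarse curve together with a balanced root-stacking at each node, with the local balanced structure encoded by the single rooting parameter $s_e$.

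Finally I would package these local identifications globally. The comparison morphism $\Mfrak^{\tw}\to\Acal(\MTropOrb)\times_{\Acal(\MTrop)}\Mfrak$ is canonical, both sides are smooth Artin stacks of the expected relative dimension over $\Mfrak$, and the previous step shows it induces isomorphisms on all local charts; hence it is an isomorphism. The main subtlety I anticipate is bookkeeping the balanced condition at each node and ensuring that the gerby coordinate $\tilde\ell_e$ corresponds to a balanced, rather than unbalanced, rooting; once this is absorbed into the definition of $\MTropOrb$ via a single parameter $s_e$ per edge, the argument runs smoothly.
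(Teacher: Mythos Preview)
Your approach is correct and is essentially the short argument the paper itself alludes to at the start of its proof: ``This follows immediately from \Cref{construction: map twisted tropical curves to tropical curves} and the fact that $\Mfrak^{\tw} \to \Mfrak$ is locally on the source a root stack \cite[Theorem~1.10]{OlssonLogTwisted}.'' Your local identification via the $s_e$-th power map on $\Acal$ is exactly this observation.

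One omission: you only treat rooting at nodes, but twisted curves in $\Mfrak^{\tw}$ also carry $\upmu_{s_i}$-structure at the markings. In \Cref{def: type of twisted tropical curve} the rooting data is $s \colon E(\Gamma) \sqcup L(\Gamma) \to \Z_{\geqslant 1}$, and the leg parameters index connected components of both $\Acal(\MTropOrb)$ and $\Mfrak^{\tw}$. This is easily repaired---the leg rooting is discrete data matching components on both sides---but your statement ``a twisted prestable curve is the same data as its coarse curve together with a balanced root-stacking at each node'' is not quite right as written.

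The paper, however, deliberately gives a longer self-contained proof: it constructs an explicit inverse $S \to \Mfrak^{\tw}$ by pulling back the universal curve, producing a finite-index extension of ghost sheaves $\Mbar_C \hookrightarrow \Mbar_C^\prime$ stalkwise (at general points, markings, and nodes separately), and invoking Olsson's equivalence between logarithmically twisted and twisted curves. The reason is stated explicitly: ``since we will require a generalisation of this result, we provide a self-contained argument.'' That generalisation is \Cref{construction: universal family PunctOrb}, where the same ghost-sheaf-extension technique is used to build the universal twisted punctured curve over the chimera, now with a genuinely new case at punctures. Your argument proves the lemma but does not set up this machinery, so you would have to revisit the construction later.
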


\begin{proof} This follows immediately from \Cref{construction: map twisted tropical curves to tropical curves} and the fact that $\Mfrak^{\tw} \to \Mfrak$ is locally on the source a root stack \cite[Theorem~1.10]{OlssonLogTwisted}. However since we will require a generalisation of this result, we provide a self-contained argument.

Temporarily let $S$ denote the fibre product. There is a morphism $\Mfrak^\tw \to S$ induced by the following diagram:
\[
\begin{tikzcd}
\Mfrak^{\tw} \ar[rd] \ar[ddr, bend right] \ar[rrd, bend left] & & \\
& S \ar[r] \ar[d] \ar[rd,phantom,"\square"] & \Mfrak \ar[d] \\
& \Acal(\MTropOrb) \ar[r] & \Acal(\MTrop).	
\end{tikzcd}
\]
We construct an inverse. The universal curve over $\Mfrak$ pulls back to a family of curves $\uppi \colon C \to S$. This admits a unique enhancement to a minimal logarithmically smooth curve
\[ (C,M_C) \to (S,M_S)\]
where the logarithmic structure $M_S$ is pulled back along the morphism $S \to \Acal(\MTrop)$. Pulling back the logarithmic structure along the morphism $S \to \Acal(\MTropOrb)$ produces a finite-index extension:
\begin{equation} \label{eqn: twisted curve extension of base structure} M_S \hookrightarrow M_S^\prime. \end{equation}
Finally, since the stack $\Acal(\MTropOrb)$ decomposes into connected components indexed by rooting parameters along the legs, the morphism $S \to \Acal(\MTropOrb)$ defines locally constant rooting parameter functions
\begin{equation} \label{eqn: twisted curve rooting parameters} s_i \colon S \to \Z_{\geqslant 1}\end{equation}
for $i \in [n]$. Taken together, \eqref{eqn: twisted curve extension of base structure} and \eqref{eqn: twisted curve rooting parameters} give the data of a logarithmically twisted curve in the sense of \cite[Definition~1.8]{OlssonLogTwisted}. Given this data, \cite[Theorem~1.9]{OlssonLogTwisted} produces a twisted curve. We summarise the construction. The twisted curve arises as a logarithmic root stack 
\[ \Ccal \to C\]
induced by a finite-index extension of ghost sheaves
\begin{equation} \label{eqn: twisted curve extension ghost sheaf} \Mbar_C \hookrightarrow \Mbar_C^\prime. \end{equation}
We construct the extension as follows. At a general point $x \in C$ we have $\Mbar_{C,x} = \Mbar_{S,\uppi(x)}$. Choose the extension:
\[ \Mbar_{S,\uppi(x)} \hookrightarrow \Mbar^\prime_{S,\uppi(x)}.\]
At a marking $p_i \in C$ we have $\Mbar_{C,p_i} = \Mbar_{S,\uppi(p_i)} \oplus \N$. Choose the extension:
\[ \Mbar_{S,\uppi(p_i)} \oplus \N \hookrightarrow \Mbar^\prime_{S,\uppi(p_i)} \oplus \tfrac{1}{s_i} \N.\]
Finally at a node $q \in C$ consider the associated smoothing parameter $u_q \in \Mbar_{S,\uppi(q)}$. Let 
\[ u_q^\prime \in \Mbar^\prime_{S,\uppi(q)} \]
denote the unique primitive element such that $s_q u_q^\prime = u_q$ for some $s_q \in \Z_{\geqslant 1}$. Then take the extension
\[ \Mbar_{S,\uppi(q)} \oplus_\N \N^2 \hookrightarrow \Mbar^\prime_{S,\uppi(q)} \oplus_\N \N^2 \]
where the morphism $\N \to \Mbar^\prime_{S,\uppi(q)}$ is given by $1 \mapsto u_q^\prime$. The map is induced by the given inclusion on the first factor, and scaling by $s_q$ on the second factor.

These extensions of stalks are compatible with generisation. We thus obtain a finite-index extension \eqref{eqn: twisted curve extension ghost sheaf}. This induces a family of twisted curves $\Ccal \to S$ and hence a morphism
\[ S \to \Mfrak^{\tw} \]
providing an inverse to the morphism $\Mfrak^{\tw} \to S$ constructed above.
\end{proof}

\subsection{Tropical twisted punctured maps} \label{sec: tropical twisted punctured maps} Fix numerical data $\Lambda$ enhanced by rooting data, as in Definitions~\ref{def: numerical data} and \ref{def: rooting data}. With this setup, we define the \textbf{gerby degree} to be
\[ \tilde{d} \colonequals d/r \in \tfrac{1}{r} \Z \]
and we define the \textbf{gerby contact order} at each marking $i \in [n]$ to be
\begin{equation} \label{eqn: gerby vs coarse tangency} \widetilde{\upalpha}_i \colonequals \dfrac{\upalpha_i}{(r/s_i)} \in \Z. \end{equation}
We emphasise that the gerby contact order is an integer; this follows from the divisibility condition (\Cref{def: rooting data}). Its geometric interpretation is as follows. Restricting to a neighbourhood of $p_i \in C$ we obtain a diagram
\begin{equation} \label{eqn: commuting square gerby contact order}
\begin{tikzcd}
(\widetilde{C}|\widetilde{p}_i) \ar[d,"{\sqrt[\uproot{4} s_i]{p_i}}" left] \ar[r,"\tilde{f}"] & 	(\widetilde{X}|\widetilde{D}) \ar[d,"{\sqrt[r]{D}}" right]\\
(C|p_i) \ar[r,"f"] & (X|D)
\end{tikzcd}
\end{equation}
where the vertical morphisms are root stacks. The gerby contact order is the exponent relating the equation of the gerby divisor with the equation of the gerby marking:
\[ \tilde{f}^\star z_{\widetilde{D}} = z_{\widetilde{p}_i}^{\widetilde{\upalpha}_i}. \]
The identity \eqref{eqn: gerby vs coarse tangency} then follows from the commutativity of the square \eqref{eqn: commuting square gerby contact order} and the corresponding relation on the coarse spaces:
\[ f^\star z_D = z_{p_i}^{\upalpha_i}.\]

The following definition arises naturally by examining line bundles and sections on twisted curves and root stacks. When $r=1$, it reduces to \Cref{def: type of tropical punctured map}.
\begin{definition} \label{def: type of twisted tropical punctured map} A \textbf{tropical type} of twisted punctured map to $(\Rplus,r)$ with numerical data $\Lambda$ consists of the following data:
\begin{itemize}
\item \textbf{Source graph.} A finite graph $\Gamma$ consisting of vertices $V(\Gamma)$, finite edges $E(\Gamma)$, ordinary markings $O(\Gamma)$, and punctures $P(\Gamma)$. This is equipped with bijections
\[ O(\Gamma) \cong O, \qquad P(\Gamma) \cong P \]
where $O$ and $P$ are the sets of ordinary markings and punctures in \Cref{def: numerical data}. Each vertex $v \in V(\Gamma)$ is equipped with genus and gerby degree labels
\[ g_v \in \N, \quad \tilde{d}_v \in \tfrac{1}{r} \Z\]
such that $g = b_1(\Gamma) + \Sigma_{v \in V(\Gamma)} g_v$ and $\tilde{d} = \Sigma_{v \in V(\Gamma)} \tilde{d}_v$.
\item \textbf{Rooting parameters.} An assignment of a rooting parameter to every edge:
\[ s \colon E(\Gamma) \to \Z_{\geqslant 1}.\]
\item \textbf{Image cones.} Cones of $\Rplus$ associated to every vertex and edge of $\Gamma$
\[ v \rightsquigarrow \upsigma_v, \quad e \rightsquigarrow \upsigma_e \]
such that if $v \in e$ then $\upsigma_v \subseteq \upsigma_e$.
\item \textbf{Gerby slopes.} For each oriented edge $\vec{e}$ of $\Gamma$ with $\upsigma_e=\Rplus$ a  gerby slope
\[ \widetilde{m}_{\vec{e}} \in \Z \]
satisfying $\widetilde{m}_{\vec{e}}=-\widetilde{m}_{\cev{e}}$. At each vertex $v \in V(\Gamma)$ these slopes must satisfy the following gerby balancing condition
\begin{equation} \label{eqn: gerby balancing} \tilde{d}_v = \sum_{v \in e} \dfrac{\widetilde{m}_{\vec{e}}}{s_e} + \sum_{v \in i} \dfrac{\widetilde\upalpha_i}{s_i} \end{equation}
where each edge $e$ is oriented to point away from $v$, and the second sum is over markings supported at $v$.

Finally, at each oriented edge $\vec{e} \in \vec{E}(\Gamma)$ the pair $(\widetilde{m}_{\vec{e}},s_e)$ of gerby slope and rooting parameter must satisfy the divisibility, coprimality, and size conditions of \Cref{def: rooting data}.
\end{itemize}
\end{definition}
To each tropical type $\Theta$ of twisted punctured map to $(\Rplus,r)$ we associate a rational polyhedral cone. This proceeds similarly to the usual ($r=1$) case. The cone is embedded in an orthant
\[
\prod_{e\in E(\Gamma)}\mathbb R_{\geqslant 0}\times\prod_{v\in V(\Gamma)} \upsigma_v
\]
coordinatised by gerby edge lengths $\tilde{\ell}_e$ and gerby vertex positions $\tilde{\ftrop}(v)$.

\begin{definition} The \textbf{tropical moduli cone} associated to $\Theta$ is the rational polyhedral cone
\[
\widetilde{\uptau}_{\Theta} \subseteq \prod_{e\in E(\Gamma)}\mathbb R_{\geqslant 0}\times\prod_{v\in V(\Gamma)} \upsigma_v
\]
defined by the equations
\begin{equation} \label{eqn: gerby continuity}
\tilde{\ftrop}(v_2) = \tilde{\ftrop}(v_1) + \widetilde{m}_{\vec{e}} \tilde{\ell}_e
\end{equation}
for every oriented edge $\vec{e} \in \vec{E}(\Gamma)$ starting at $v_1$ and ending at $v_2$.
\end{definition}

Specialisations of tropical types, including automorphisms, induce face maps of tropical moduli cones. These are used to glue the tropical moduli cones into a cone stack.

\begin{definition} The moduli space of \textbf{tropical twisted punctured maps} is the colimit
\[ \mathsf{T}_r \colonequals \Trop_\Lambda(\Rplus,r) = \varinjlim_{\Theta} \widetilde\uptau_{\Theta} \]
over all cones $\widetilde\uptau_{\Theta}$ associated to tropical types $\Theta$ with numerical data $\Lambda$, connected via face maps corresponding to specialisations of tropical types.
\end{definition}

\begin{notation} To ease readability, we will use the same symbol $\widetilde{\uptau}$ to denote both a tropical type of twisted punctured map to $(\Rplus,r)$, and the associated cone in $\Tsf_r$. Since the former index the latter, there is no ambiguity. \end{notation}


\begin{construction} \label{construction: twisted tropical maps to tropical maps} We construct a natural morphism of cone stacks 
\[ \Tsf_r \to \Tsf \]
as follows. Each type $\widetilde{\uptau}$ of twisted tropical punctured map induces a type $\uptau$ of tropical punctured map (the \textbf{coarsening}) as follows. We retain the source graph and image cones, and forget the rooting parameters. Gerby degrees and gerby slopes are replaced by coarse degrees and coarse slopes as follows:
\[ d_v \colonequals r \tilde{d}_v, \qquad m_{\vec{e}} \colonequals (r/s_e) \widetilde{m}_{\vec{e}}.	\]
It remains to check the balancing condition. At each vertex $v \in V(\Gamma)$ we multiply the gerby balancing condition \eqref{eqn: gerby balancing} by $r$ to obtain:
\[ d_v = r\tilde{d}_v = \sum_{v \in e} (r/s_e) \widetilde{m}_{\vec{e}} + \sum_i (r/s_i) \widetilde{\upalpha}_i = \sum_{v \in e} m_{\vec{e}} + \sum_i \upalpha_i. \]
This verifies that the coarsening $\uptau$ is a valid type of tropical punctured map. We now define a morphism $\widetilde{\uptau} \to \uptau$ by imposing the following relationships between coordinates:
\begin{align} 
\label{eqn: relation gerby coarse edge length} \ell_e & = s_e \tilde{\ell}_e \qquad \text{\ \ for $e \in E(\Gamma)$,}\\
\label{eqn: relation gerby coarse vertex positions} \ftrop(v) & = r \, \tilde{\ftrop}(v) \qquad \text{for $v \in V(\Gamma)$.}	
\end{align}
To check that this respects the relations, multiply \eqref{eqn: gerby continuity} by $r$ to obtain
\[ r \tilde{\ftrop}(v_1) + r \widetilde{m}_{\vec{e}}\, \tilde{\ell}_e = r \tilde{\ftrop}(v_2).\]
The right-hand side is equal to $\ftrop(v_2)$. We simplify the left-hand side as follows:
\begin{align*}
r \tilde{\ftrop}(v_1) + r \widetilde{m}_{\vec{e}} \tilde{\ell}_e & = \ftrop(v_1) + r \left(\frac{s_e m_{\vec{e}}}{r} \right) \left(\frac{\ell_e}{s_e}\right)  \\
& = \ftrop(v_1) + m_{\vec{e}}\, \ell_e.
\end{align*}
This verifies that the morphism $\widetilde{\uptau} \to \uptau$ given by \eqref{eqn: relation gerby coarse edge length}, \eqref{eqn: relation gerby coarse vertex positions} is well-defined. These morphisms respect generisation of types, and hence glue to produce a morphism of cone stacks $\Tsf_r \to \Tsf$ as required.\qed
\end{construction}

\begin{lemma} \label{lem: twisted maps to maps separated} The morphism of Artin fans $\Acal(\Tsf_r) \to \Acal(\Tsf)$ is a generalised root stack.
\end{lemma}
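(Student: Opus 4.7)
The plan is to work one cone at a time: show that each map $\widetilde{\uptau} \to \uptau$ from \Cref{construction: twisted tropical maps to tropical maps} is, after a suitable presentation, diagonal multiplication by positive integers on an orthant lattice; on Artin charts this realises the source as an iterated root stack of the target along torus-invariant divisors, and these local presentations glue globally.

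First I would fix a tropical type $\uptau$ and enumerate its twisted lifts $\widetilde{\uptau}$: these are parameterised by admissible rooting parameters $(s_e)_{e \in E(\Gamma)}$ on edges (the leg parameters $(r, s_i)$ being fixed by $\Lambda$). After choosing a spanning tree of $\Gamma$ with anchor vertex $v_0$, the continuity equations express every non-tree edge length and vertex position in terms of the tree edge lengths and, if $\upsigma_{v_0} \neq 0$, the anchor position $\ftrop(v_0)$. Under this presentation both $\uptau$ and $\widetilde{\uptau}$ are free orthants, and the coarsening map of \Cref{construction: twisted tropical maps to tropical maps} is the diagonal multiplication $\tilde{\ell}_e \mapsto s_e \tilde{\ell}_e$, together with $\tilde{\ftrop}(v_0) \mapsto r\,\tilde{\ftrop}(v_0)$ when relevant. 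Compatibility with the continuity equations along non-tree edges is precisely the identity already verified in \Cref{construction: twisted tropical maps to tropical maps}.

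Diagonal multiplication by positive integers on an orthant lattice corresponds on the associated Artin chart to an iterated root stack: each scaling-by-$s$ factor $\Acal \to \Acal$ realises the source as the $s$-th root stack of the target along the boundary divisor $\Bcal \Gm \hookrightarrow \Acal$. Applying this to every cone produces a local iterated root stack presentation of $\Acal(\Tsf_r) \to \Acal(\Tsf)$. To globalise I would observe that the rays of $\Tsf$ are in bijection with the boundary divisors of $\Acal(\Tsf)$, and each ray carries an intrinsic rooting parameter: $s_e$ for a ray parameterising an edge length, and $r$ for a ray parameterising a vertex position. This parameter is independent of the cone in which the ray is considered, so the local root stack data assembles into a global iterated root stack of $\Acal(\Tsf)$ along its boundary divisors with the indicated indices.

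The main difficulty is verifying that the intrinsic root parameters along rays are well-defined and that the local presentations glue coherently across edge-contraction specialisations of types. This parallels the gluing argument for twisted curves in the proof of \Cref{lem: twisted curves as log modification of curves}: contracting an edge removes its rooting parameter while the remaining parameters are preserved. Consequently no new gluing pathologies arise, and the ray-wise parameters extracted from different enclosing cones coincide.
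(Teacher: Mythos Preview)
Your local analysis is correct in spirit, but you have missed the key global observation that makes the argument short: the coprimality condition in \Cref{def: type of twisted tropical punctured map} forces the rooting parameter $s_e$ on each edge to be \emph{uniquely} determined by $r$ and the coarse slope $m_{\vec{e}}$. Hence each tropical type $\uptau$ admits exactly one twisted lift $\widetilde{\uptau}$, and the map $\Tsf_r \to \Tsf$ is a bijection on cones. Combined with your observation that each $\widetilde{\uptau} \to \uptau$ is a finite-index lattice inclusion, this says precisely that $\Tsf_r \to \Tsf$ is a lattice modification of cone stacks, and the corresponding map of Artin fans is then automatically an iterated root stack. No further gluing argument is needed.

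Because you phrase the lifts as ``parameterised by admissible rooting parameters $(s_e)$'' and then worry about gluing across specialisations, you give the impression that several lifts may exist over a given $\uptau$; if that were so, the map on Artin fans would genuinely fail to be a root stack (it would not even be a homeomorphism on underlying spaces). Your ray-by-ray description is also not quite right: rays of $\Tsf$ are one-dimensional tropical types, not individual edge-length or vertex-position coordinates, so assigning ``$s_e$ to an edge-length ray and $r$ to a vertex-position ray'' does not make sense as stated. The paper's proof avoids all of this by invoking the cone bijection directly.
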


\begin{proof} 
We will show that $\Tsf_r \to \Tsf$ is obtained by taking a system of compatible lattice extensions of each cone in the target. By \Cref{construction: twisted tropical maps to tropical maps}, on each cone of the source $\widetilde{\uptau} \to \uptau$, the morphism is induced by a lattice extension: each source length (corresponding to the edge $e$) is multiplied by $s_e$, every target length is multiplied by $r$. On the other hand the map $\Tsf_r \to \Tsf$ induces a bijection between the cones of the source and target. This is because the coprimality condition (\Cref{def: rooting data}) uniquely determines the rooting parameter $s_e$ in terms of $r$ and $m_{\vec{e}}$.
\end{proof}

\subsection{Twisted puncturing substack}
As in \Cref{sec: puncturing substack} there is for each puncture a natural puncturing offset morphism
\[ \tilde\ftrop(p) \colon \mathsf{T}_r \to \Rplus\]
given by $\tilde\ftrop(p) = \tilde\ftrop(v)$ where $v$ is the vertex supporting the puncture $p$. Since the target is a smooth pair we have $k_P=|P|$. We perform the corresponding algebraic fibre product to obtain
\[
\begin{tikzcd}
\Vcal(\mathsf{T}_r) \ar[r,hook] \ar[d] \ar[rd,phantom,"\square"] & \Acal(\mathsf{T}_r) \ar[d] \\
\Bcal \Gm^{P} \ar[r,hook,"\upiota_r"] & \Acal^{P}
\end{tikzcd}
\]
and refer to $\Vcal(\mathsf{T}_r)$ as the \textbf{puncturing substack}. As in \Cref{sec: refined punctured class}, the puncturing substack is equipped with a \textbf{refined virtual class}
\[ [\Vcal(\Tsf_r)]^{\refined} \colonequals \upiota_r^![\Acal(\Tsf_r)].\]

\begin{remark}
The identity $\ftrop(v)=r \tilde\ftrop(v)$ relates the coarse and gerby puncturing offsets. These fit into the commutative diagram
\[
\begin{tikzcd}
\Tsf_r \ar[r] \ar[d] & \Tsf \ar[d] \\
\Rplus^{P} \ar[r,"r"] & \Rplus^{P}
\end{tikzcd}
\]
from which we obtain the algebraic diagram
\[
\begin{tikzcd}	
\Acal(\Tsf_r) \ar[r] \ar[d] & \Acal(\Tsf) \ar[d] \\
\Acal^{P} \ar[r,"r"] & \Acal^{P}
\end{tikzcd}
\]
but we caution that this diagram is typically not cartesian, see \ref{item:TrVsT'} in the proof of \Cref{prop: map between puncturing substacks} below.
\end{remark}

There is a morphism $\upnu \colon \Vcal(\Tsf_r) \to \Vcal(\Tsf)$ fitting into the following cubical diagram, with front and back faces cartesian:
\begin{equation} \label{eqn: cubical diagram comparing punctured substacks}
\begin{tikzcd}
\Vcal(\Tsf_r) \ar[rr,hook] \ar[dd] \ar[rd,"\upnu"] & & \Acal(\Tsf_r) \ar[dd] \ar[rd] \\
& \Vcal(\Tsf) \ar[rr,crossing over,hook] & & \Acal(\Tsf) \ar[dd] \\
\Bcal \Gm^{P} \ar[rr,"\upiota_r" {yshift=0.05cm, xshift=0.5cm},hook] \ar[rd,"r"] & & \Acal^{P} \ar[rd,"r"] \\
& \Bcal \Gm^{P} \ar[from=uu, crossing over] \ar[rr,"\upiota",hook] & & \Acal^{P}.
\end{tikzcd}
\end{equation}

\begin{proposition} \label{prop: map between puncturing substacks} The morphism $\upnu \colon \Vcal(\Tsf_r) \to \Vcal(\Tsf)$ is proper, and factors as a finite gerbe followed by an iterated root stack. It satisfies:
\[ \upnu_\star [\Vcal(\Tsf_r)]^{\refined} = r^{-|P|} [\Vcal(\Tsf)]^{\refined}.\]
\end{proposition}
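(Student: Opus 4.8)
The plan is to prove the three assertions of \Cref{prop: map between puncturing substacks} in order; the factorisation is the structural heart of the matter.

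\emph{Properness.} By \Cref{lem: twisted maps to maps separated} the morphism $\phi\colon\Acal(\Tsf_r)\to\Acal(\Tsf)$ is an iterated root stack, hence proper. From the cube \eqref{eqn: cubical diagram comparing punctured substacks}, $\upnu$ is the unique factorisation of the proper composite $\Vcal(\Tsf_r)\hookrightarrow\Acal(\Tsf_r)\xrightarrow{\phi}\Acal(\Tsf)$ through the closed, hence separated, immersion $\Vcal(\Tsf)\hookrightarrow\Acal(\Tsf)$. Since a morphism that becomes proper after post-composition with a separated morphism is itself proper, $\upnu$ is proper.

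\emph{Factorisation.} I would argue cone by cone, having first restricted to the finite-type subfans attached to a fixed geometric target --- legitimate since only finitely many cones occur and the edge parameters $s_e$ are bounded, being determined by $r$ and the edge slopes through the coprimality condition of \Cref{def: rooting data}. By the proof of \Cref{lem: twisted maps to maps separated}, $\Tsf_r\to\Tsf$ is a lattice modification bijective on cones, and on each cone the relations \eqref{eqn: relation gerby coarse edge length} and \eqref{eqn: relation gerby coarse vertex positions} exhibit the refinement as diagonal: of index $s_e$ along each edge-length direction and of index exactly $r$ along each vertex-position direction. So, chart by chart, $\phi$ is an iterated root stack --- the $s_e$-th root along $\{\ell_e=0\}$ and the $r$-th root along the vertex-position hyperplanes. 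After a unimodular change of coordinates the puncturing offsets $\ftrop(p)$ lie among the vertex-position coordinates, and $\Vcal(\Tsf)$ is their common vanishing locus. Restricting $\phi$ to $\Vcal(\Tsf)$: the roots along the $\{\ell_e=0\}$, which are transverse to $\Vcal(\Tsf)$ because the edges of a minimal tropical type are all genuine, restrict to an iterated root stack; the roots along the vertex-position hyperplanes through $\Vcal(\Tsf)$ restrict to gerbes banded by copies of $\mu_r$ (an $r$-th root stack of a Cartier divisor, restricted to that divisor, is a $\mu_r$-gerbe). Glueing these functorial local models gives $\upnu=\psi\circ\mathfrak g$ with $\mathfrak g$ a finite gerbe and $\psi$ an iterated root stack.

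\emph{Pushforward.} The refined classes are stable under flat pullback: $\Vcal(\Tsf)$, $\Vcal(\Tsf_r)$ and the intermediate space $\Vcal'$ of the factorisation are each cut inside an Artin fan by the regular embedding $\Bcal\Gm^{|P|}\hookrightarrow\Acal^{|P|}$ along the puncturing-offset morphisms, and refined Gysin maps commute with flat pullback (gerbes and root stacks being flat); so $[\Vcal(\Tsf_r)]^{\refined}=\mathfrak g^\star[\Vcal']^{\refined}$ and $[\Vcal']^{\refined}=\psi^\star[\Vcal(\Tsf)]^{\refined}$. Since $\psi$ roots only along divisors transverse to $\Vcal(\Tsf)$ it is an isomorphism over a dense open of each component, so $\psi_\star\psi^\star=\mathrm{id}$ and $\psi_\star[\Vcal']^{\refined}=[\Vcal(\Tsf)]^{\refined}$. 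The whole factor therefore comes from $\mathfrak g$, and the claim reduces to $\mathfrak g_\star[\Vcal(\Tsf_r)]^{\refined}=r^{-|P|}[\Vcal']^{\refined}$. On the dense locus where the punctures lie on distinct components of the source curve the $|P|$ offsets are independent, the refined classes restrict to fundamental classes, and $\mathfrak g$ restricts to a $\mu_r^{|P|}$-gerbe, so the identity holds there. On strata where several offsets coincide one verifies the same power $r^{-|P|}$ by a local computation: the band of $\mathfrak g$ drops, but the refined class there carries a compensating self-intersection (Chern class) factor --- concretely, \eqref{eqn: relation gerby coarse vertex positions} gives $\phi^\star\sigma_p=\tilde\sigma_p^{\otimes r}$, so $\phi^{-1}Z(\sigma_p)=r\cdot Z(\tilde\sigma_p)$ as Cartier divisors, and applying the identity $(rD)^!=r\,D^!$ for a Cartier divisor $D$ together with the projection formula for the gerbe shows that the ``gerbe band $+$ excess codimension $=|P|$'' bookkeeping produces exactly $r^{-|P|}$ on every stratum. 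Composing, $\upnu_\star[\Vcal(\Tsf_r)]^{\refined}=\psi_\star\mathfrak g_\star[\Vcal(\Tsf_r)]^{\refined}=r^{-|P|}[\Vcal(\Tsf)]^{\refined}$.

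\emph{Main obstacle.} The substance of the proof is the factorisation together with the gerbe accounting: showing that $\Tsf_r\to\Tsf$ decouples cleanly into a gerbe part carried by the puncturing offsets and a root-stack part carried by the edge lengths, that the offset refinement has index precisely $r$, that the local models glue over the non-finite-type cone stack, and that the drop in the band of $\mathfrak g$ along the ``collision'' strata is always exactly balanced by the self-intersection terms in the refined virtual class. Everything else is routine manipulation with flat base change, the projection formula, and the Cartier-divisor scaling identity.
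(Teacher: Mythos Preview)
Your properness argument is correct and matches the paper's. The factorisation and pushforward, however, have genuine gaps. For the factorisation, your ``cone by cone'' decomposition into edge-length roots (transverse to $\Vcal(\Tsf)$) and vertex-position roots (restricting to gerbes) is not justified as stated: edge lengths and vertex positions are coupled by the continuity relations \eqref{eqn: gerby continuity}, so the lattice refinement $\Tsf_r\to\Tsf$ is not diagonal in any obvious basis, and the claimed transversality of the loci $\{\ell_e=0\}$ to $\Vcal(\Tsf)$ is asserted rather than proved. For the pushforward, routing through the factorisation forces you to posit a refined class $[\Vcal']^{\refined}$ on the intermediate space, but you never construct the ambient Artin fan or puncturing-offset map needed to define it. Your treatment of the collision strata --- where the band of $\mathfrak g$ drops below $\mu_r^{|P|}$ --- is an assertion (``gerbe band $+$ excess codimension $= |P|$''), not a proof; making it precise would require a stratum-by-stratum excess computation that you do not carry out.

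The paper sidesteps both difficulties by enlarging the diagram rather than decomposing the morphism. It introduces two auxiliary objects: the cone stack $\Tsf_r'$ obtained from $\Tsf$ by taking $r$th roots of the puncturing offsets only (so that $\Acal(\Tsf_r') = \Acal(\Tsf) \times_{\Acal^P,\,r} \Acal^P$), and the ``thick point'' $\Bcal\mathbb{G}_{\mathrm{m}}^{(r)} = [(t^r{=}0)/\Gm]\subseteq\Acal$. These fit into a diagram with cartesian squares lying over the factorisation $\Bcal\Gm^P \xrightarrow{\upmu} (\Bcal\mathbb{G}_{\mathrm{m}}^{(r)})^P \hookrightarrow \Acal^P$ of $\upiota_r$. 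The gerbe/root-stack factorisation of $\upnu$ then drops out by base change, and the pushforward becomes a three-step diagram chase using only commutation of Gysin maps with proper pushforward, birationality of $\Acal(\Tsf_r)\to\Acal(\Tsf_r')\to\Acal(\Tsf)$, and the single identity $\upmu_\star[\Bcal\Gm^P] = r^{-|P|}[(\Bcal\mathbb{G}_{\mathrm{m}}^{(r)})^P]$; this is where the factor $r^{-|P|}$ appears, once and cleanly, with no stratum analysis. Incidentally, the ingredients you cite at the very end --- $\phi^\star D_p = r\tilde D_p$, the scaling $(rD)\cdot\alpha = r(D\cdot\alpha)$, and the projection formula for $\phi\colon\Acal(\Tsf_r)\to\Acal(\Tsf)$ --- do assemble into a valid direct argument for the pushforward that bypasses the factorisation entirely; but you route them through $\mathfrak g$ and $\psi$ instead, where they do not obviously close the gap.
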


\begin{proof} 
To see that $\upnu$ is proper, first note that the closed embedding $\Vcal(\Tsf) \hookrightarrow \Acal(\Tsf)$ is proper. Moreover in the composite
\[ \Vcal(\Tsf_r) \hookrightarrow \Acal(\Tsf_r) \to \Acal(\Tsf) \]
both morphisms are proper: the first is a closed embedding and the second is an iterated root stack (\Cref{lem: twisted maps to maps separated}). We obtain a diagram
\[
\begin{tikzcd}
\Vcal(\Tsf_r) \ar[rr,"\upnu"] \ar[rd] && \Vcal(\Tsf) \ar[ld] \\
& \Acal(\Tsf) &	
\end{tikzcd}
\]
in which both morphisms to $\Acal(\Tsf)$ are proper. It follows that $\upnu$ is proper as claimed. 

To compare the refined virtual classes, we wish to push and pull around the cubical diagram \eqref{eqn: cubical diagram comparing punctured substacks}. There is a minor irritation: only the front and back faces of the cube are cartesian. On the other hand, \emph{all} the faces are cartesian up to appropriate birational modifications and finite covers. The comparison is achieved by passing to a more intricate diagram which keeps track of these alterations (similar trickery occurs, for instance, in the proof of the product formula \cite{BehrendProduct}). Consider therefore the following diagram, in which all the squares are cartesian:
\[
\begin{tikzcd}
\Vcal(\Tsf_r) \ar[r,"\uplambda"] \ar[d] & C \ar[d,"\upzeta"] \ar[rr] && \Acal(\Tsf_r) \ar[d,"\upsigma"] \\
B \ar[r] \ar[dd] & A \ar[rr,hook] \ar[dd] \ar[rd,"\upeta"] & & \Acal(\Tsf_r^\prime) \ar[dd,"\uptau" yshift=15pt] \ar[rd,"\uprho"] \\
& & \Vcal(\Tsf) \ar[rr,crossing over,hook] & & \Acal(\Tsf) \ar[dd] \\
\Bcal \Gm^{P} \ar[r,"\upmu",hook] \ar[rrd, bend right = 10pt, "r"] & (\Bcal \mathbb{G}_{\mathrm{m}}^{(r)})^{P} \ar[rr,"\upkappa" xshift=10pt,hook] \ar[rd] & & \Acal^{P} \ar[rd,"r"] \\
& & \Bcal \Gm^{P} \ar[from=uu, crossing over] \ar[rr,hook,"\upiota"] & & \Acal^{P}.
\end{tikzcd}
\]
Note that $\Acal(\Tsf_r^\prime), A, B,$ and $C$ are auxiliary fiber product stacks. We use the following geometric facts about this diagram:
\begin{enumerate}
\item The morphism $r \colon \Acal^P \to \Acal^P$ is flat.
\item The stack $\Bcal \mathbb{G}_{\mathrm{m}}^{(r)}$ is the root stack of $\Bcal \Gm$ along the pair $(\Lcal,0)$ where $\Lcal$ is the universal line bundle. It is given explicitly by:
\[ \Bcal \mathbb{G}_{\mathrm{m}}^{(r)} = [(t^r\!=\!0)/\Gm] \subseteq [\Aone_t/\Gm].\]
The morphism $\upmu$ is the inclusion of the reduced substack $[(t\!=\!0)/\Gm]$ in each factor, and so:
\[ \upmu_\star [ \Bcal \Gm^{P}] = r^{-|P|} [(\Bcal \mathbb{G}_{\mathrm{m}}^{(r)})^{P}].\]
The morphism $r \colon \Bcal \Gm^P \to \Bcal \Gm^P$ is a finite gerbe. See \cite[Example 2.4.3]{Cadman}.
\item\label{item:TrVsT'} The cone stack $\Tsf_r^\prime$ is obtained from $\Tsf$ by introducing $r$th roots of all puncturing offsets. In $\Tsf_r$ we further introduce roots of \emph{all} vertex positions (including of those vertices that do not support any punctures) and all source edge lengths (according to their associated rooting parameters). Thus, the morphisms
\[ \Acal(\Tsf_r) \xrightarrow{\upsigma} \Acal(\Tsf_r^\prime) \xrightarrow{\uprho} \Acal(\Tsf)\]
are both iterated root stacks, in particular proper and birational.
\item The morphism $\uptau \circ \upsigma$ has smooth codomain and irreducible domain, and hence there is a well-defined pullback obtained via the graph construction \cite[Chapter~8]{FultonBig}.
\end{enumerate}
It follows immediately that $B \to \Vcal(\Tsf)$ is a finite gerbe and $\Vcal(\Tsf_r) \to B$ is an iterated root stack. It remains to establish the pushforward result. We will compute $\upnu_\star [\Vcal(\Tsf_r)]^{\refined}$ by pushing it forward along $\uplambda$, $\upzeta$, $\upeta$ in turn. The necessary commutativity and compatibility of Gysin pullbacks with proper pushforwards are discussed in \cite[Section 5.1]{Kresch}. We have:
\[ [\Vcal(\Tsf_r)]^{\refined} = (\upkappa \upmu)^![\Acal(\Tsf_r)] = (\upkappa \upmu)^! (\uptau \upsigma)^! [ \Acal^{P}] = (\uptau \upsigma)^!(\upkappa \upmu)^!  [ \Acal^{P}] = (\uptau \upsigma)^! [\Bcal \Gm^{P}].\]
We first push forward along $\uplambda$:
\begin{align*} \uplambda_\star [\Vcal(\Tsf_r)]^{\refined} & = \uplambda_\star (\uptau \upsigma)^! [\Bcal \Gm^{P}] = (\uptau \upsigma)^! \upmu_\star [\Bcal \Gm^{P}] = r^{-|P|} (\uptau \upsigma)^! [(\Bcal \mathbb{G}_{\mathrm{m}}^{(r)})^{P}].
\end{align*}
We next push forward along $\upzeta$:
\begin{align*} \upzeta_\star \uplambda_\star [\Vcal(\Tsf_r)]^{\refined} & = r^{-|P|} \upzeta_\star (\uptau \upsigma)^! [(\Bcal \mathbb{G}_{\mathrm{m}}^{(r)})^{P}] = r^{-|P|}\upzeta_\star (\uptau \upsigma)^! \upkappa^! [\Acal^{P}] \\
& = r^{-|P|} \upzeta_\star \upkappa^! (\uptau \upsigma)^! [\Acal^{P}] = r^{-|P|} \upzeta_\star \upkappa^! [\Acal(\Tsf_r)] \\
& = r^{-|P|} \upkappa^! \upsigma_\star [\Acal(\Tsf_r)] = r^{-|P|} \upkappa^! [\Acal(\Tsf_r^\prime)] \\
& = r^{-|P|} \upiota^! [\Acal(\Tsf_r^\prime)].
\end{align*}
The final equality uses $\upkappa^!=\upiota^!$ which holds because $r$ is flat. Finally we push forward along $\upeta$:
\begin{align*} \upnu_\star [\Vcal(\Tsf_r)]^{\refined} & = \upeta_\star \upzeta_\star \uplambda_\star [\Vcal(\Tsf_r)]^{\refined} = r^{-|P|} \upeta_\star \upiota^! [\Acal(\Tsf_r^\prime)] \\
& = r^{-|P|} \upiota^! \uprho_\star [\Acal(\Tsf_r^\prime)] = r^{-|P|} \upiota^! [\Acal(\Tsf)] \\
& = r^{-|P|} [\Vcal(\Tsf)]^{\refined}	.\qedhere
\end{align*}
\end{proof}

\subsection{Chimera} \label{sec: chimera} The chimera will be used to interpolate between punctured and orbifold moduli. 

\begin{definition} \label{def: chimera} The moduli space $\PunctOrb_\Lambda$ for both universal and geometric targets is defined by the fibre diagram:
\begin{equation} \label{diagram: defining chimera}
\begin{tikzcd}
\PunctOrb_\Lambda(X_r|D_r) \ar[r] \ar[d,"\uppsi_r"] \ar[rd,phantom,"\square"] & \Punct_\Lambda(X|D) \ar[d,"\uppsi"] \\
\PunctOrb_\Lambda(\Acal_r|\Dcal_r) \ar[r,"\upalpha"] \ar[d,"\upphi_r"] \ar[rd,phantom,"\square"] & \Punct_\Lambda(\Acal|\Dcal) \ar[d,"\upphi"] \\
\Vcal(\mathsf{T}_r) \ar[r,"\upnu"] & \Vcal(\mathsf{T}).
\end{tikzcd}
\end{equation}
We establish the key properties of the chimera:
\begin{itemize}
\item Obstruction theory (\Cref{sec: chimera obstruction theory}).
\item Universal family (\Cref{sec: chimera universal family}).
\item Formal properties (\Cref{sec: chimera formal properties}).	
\end{itemize}
A related construction appears independently in \cite{JohnstonFrobenius}.
\end{definition}

\subsubsection{Obstruction theories} \label{sec: chimera obstruction theory}
The morphism $\upphi_r$ is smooth because $\upphi$ is smooth. The morphism $\uppsi_r$ carries a perfect obstruction theory because $\uppsi$ carries a perfect obstruction theory \cite[Proposition~7.2]{BehrendFantechi}. Since $(X_r|D_r) \to (X|D)$ is logarithmically \'etale, the pullback of $\Tlog_{X|D}$ to $X_r$ equals $\Tlog_{X_r|D_r}$ and so:
\begin{equation} \label{eqn: obstruction theory chimera} \EE_{\uppsi_r} = (\Rder \uppi_\star f^\star \Tlog_{X_r|D_r})^\vee.\end{equation}
As in \Cref{sec: refined punctured class}, we use smooth and virtual pullback to define \textbf{refined virtual classes}:
\begin{align*}
[\PunctOrb_\Lambda(\Acal_r|\Dcal_r)]^{\refined} & \colonequals \upphi_r^\star [\Vcal(\Tsf_r)]^{\refined}, \\
[\PunctOrb_\Lambda(X_r|D_r)]^{\refined} & \colonequals \uppsi_r^![\PunctOrb_\Lambda(\Acal_r|\Dcal_r)]^{\refined}.
\end{align*}

\subsubsection{Universal family} \label{sec: chimera universal family} Since we have defined the chimera via a fibre product, its universal family needs to be constructed manually. 

\begin{construction}  \label{construction: universal family PunctOrb} We construct a \textbf{universal twisted punctured map} $\Ccal \to (\Acal_r|\Dcal_r)$ over $\PunctOrb_\Lambda(\Acal_r|\Dcal_r)$. This fits into a diagram
\[
\begin{tikzcd}
(\Acal_r|\Dcal_r) \ar[r] & (\Acal|\Dcal) & \\
\Ccal \ar[u] \ar[rd] \ar[r] & C \ar[r] \ar[d] \ar[u] \ar[rd,phantom,"\square"] & C \ar[d]  \\
& \PunctOrb_\Lambda(\Acal_r|\Dcal_r) \ar[r] & \Punct_\Lambda(\Acal|\Dcal).
\end{tikzcd}
\]
where $C \to (\Acal|\Dcal)$ is the universal punctured map pulled back from $\Punct_\Lambda(\Acal|\Dcal)$, and $\Ccal \to C$ is a root construction.

The construction emulates the proof of \Cref{lem: twisted curves as log modification of curves}. To ease notation, let $S \colonequals \PunctOrb_\Lambda(\Acal_r|\Dcal_r)$. The pullback of the universal punctured curve carries a natural logarithmic structure
\[ (C,M_C) \to (S,M_S)\]
where the logarithmic structure $M_S$ is pulled back via the morphism $S \to \Acal(\Tsf)$. The morphism $S \to \Acal(\Tsf_r)$ produces a finite-index extension $M_S \hookrightarrow M_S^\prime$. We will now produce a finite-index extension
\begin{equation} \label{eqn: twisted punctured maps ghost sheaf extension} \Mbar_C \hookrightarrow \Mbar^\prime_C \end{equation}
whose associated root stack $\Ccal \to C$ will be our twisted punctured curve. The extension at general points, ordinary markings, and nodes is given as in the proof of \Cref{lem: twisted curves as log modification of curves}. It remains to describe the extension at punctures. At a puncture $p_i$ we have
\[ \ol{M}_{C,p_i} \hookrightarrow \ol{M}_{S,\uppi(p_i)} \oplus \Z \]
with $\ol{M}_{C,p_i}$ generated by $\ol{M}_{S,\uppi(p_i)} \oplus \N$ and $f^\flat(1)$. We define $\ol{M}_{C,p_i}^\prime$ to be the finite-index extension generated by:
\begin{enumerate}
\item $\ol{M}_{S,\uppi(p_i)}^\prime$.
\item $\tfrac{1}{s_i} \N$.
\item $\tfrac{1}{r} f^\flat(1)$.
\end{enumerate}
These extensions of stalks are compatible with generisation, and thus globalise to an extension \eqref{eqn: twisted punctured maps ghost sheaf extension}. This induces a root stack
\[ \Ccal \to C \]
which we refer to as the \textbf{universal twisted punctured curve}. Finally, we must verify that the punctured map $C \to (\Acal|\Dcal)$ lifts to a twisted punctured map to the root stack:
\[
\begin{tikzcd}
\Ccal \ar[r] \ar[d,dashed] & C  \ar[d,"f"] \\
(\Acal_r|\Dcal_r) \ar[r]  & (\Acal|\Dcal).
\end{tikzcd}
\]
Since both horizontal morphisms are root stacks, this is a purely tropical problem. It suffices to show that the section
\[ f^\flat(1) \in \Mbar_C \]
has an $r$th root in $\Mbar_C^\prime$. This can be seen by directly inspecting the stalks defined above. \qed
\end{construction}

\begin{remark}
 This construction suggests that the chimera can be defined directly as a moduli problem over fine and saturated logarithmic schemes, whose objects over $S$ are diagrams
 \bcd
 \Ccal\ar[r,"\widetilde{f}"]\ar[d] & (\Acal_r|\Dcal_r) \ar[d,"r"]\\
 C\ar[r,"f"]\ar[d] &(\Acal|\Dcal)\\ 
 S
 \ecd
 where $f$ is the relative coarsening of $r\circ \widetilde{f}$. The characteristic sheaves of $C$ and $\Ccal$ at a puncture $p_i\in \Ccal$ are:
 \[\ol{M}_{C,p_i}=\langle \ol{M}_{S,\uppi(p_i)}, \N, f^\flat(1)\rangle\quad\subseteq \quad
  \ol{M}_{\Ccal,p_i}=\langle \ol{M}_{S,\uppi(p_i)},\tfrac{1}{s_i} \N,\tfrac{1}{r} f^\flat(1)\rangle \quad\subseteq\quad
  \ol{M}_{S,\uppi(p_i)}\oplus\Z.
 \]
In the spirit of this paper, we prefer to stress the tropical origin of the chimera.
\end{remark}

\subsubsection{Formal properties} \label{sec: chimera formal properties}
Finally, we establish formal properties required of the chimera.

\begin{lemma} \label{lem: map LogOrb to Orb} There is a natural morphism $\PunctOrb_\Lambda(\Acal_r|\Dcal_r) \to \Orb_\Lambda(\Acal_r)$.
	\end{lemma}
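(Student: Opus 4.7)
The plan is to exhibit the orbifold stable map directly from the universal family over the chimera, using the twisted punctured curve $\Ccal \to (\Acal_r|\Dcal_r)$ built in \Cref{construction: universal family PunctOrb}. The forgetful morphism to $\Orb_\Lambda(\Acal_r)$ will be obtained by discarding the puncturing logarithmic structure while retaining the representable map to the root stack.

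First I would verify that $\Ccal \to S$, with $S = \PunctOrb_\Lambda(\Acal_r|\Dcal_r)$, is naturally a twisted curve in the sense of Abramovich--Vistoli with the discrete invariants prescribed by $\Lambda$. The characteristic extension $\ol M_C \hookrightarrow \ol M_C^\prime$ from \Cref{construction: universal family PunctOrb} introduces the fraction $\tfrac{1}{s_i}\N$ at each marking $p_i$ and rescales the nodal smoothing parameters by the tropical rooting data coming from $\Tsf_r \to \Tsf$. By the same logarithmically twisted curve recipe used in the proof of \Cref{lem: twisted curves as log modification of curves} (i.e.\ \cite[Theorem~1.9]{OlssonLogTwisted}), this extension produces a representable family of twisted curves with isotropy $\upmu_{s_i}$ at the $i$th marking; the rooting parameters at nodes are exactly those recorded by the edges of the tropical type in $\Tsf_r$. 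Equivalently, this expresses $\Ccal$ as the fibre product of $C$ with $\Mfrak^{\tw} \to \Mfrak$ classified by the composite $S \to \Acal(\Tsf_r) \to \Acal(\MTropOrb)$, so $\Ccal \to S$ is classified by a morphism $S \to \Mfrak^{\tw}$.

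Next I would check that the lift $\Ccal \to \Acal_r$ constructed at the end of \Cref{construction: universal family PunctOrb} is representable and realises the correct twisted sector at each marking. Representability follows because the homomorphism on stabilisers at a puncture $p_i$ is, by construction, the map $\upmu_{s_i} \to \upmu_r$, $\upzeta_{s_i} \mapsto \upzeta_{s_i}^{\tilde\upalpha_i}$, which is injective by the coprimality condition of \Cref{def: rooting data}. The age of the universal line bundle $\Lcal$ on $\Acal_r$ pulled back to $p_i$ is read off from the gerby slope $\widetilde{\upalpha}_i$ of the puncture leg, which by construction equals $\upalpha_i/(r/s_i)$; rewriting modulo $1$ gives age $\upalpha_i/r$ if $\upalpha_i \geqslant 0$ and $1+\upalpha_i/r$ if $\upalpha_i<0$, matching the orbifold prescription in \Cref{sec: orbifold theory}. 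The size hypothesis in \Cref{def: rooting data} ensures these ages determine $\upalpha_i$ and not merely its reduction mod $r$.

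The remaining data --- genus, gerby degree, and behaviour at nodes --- agree tautologically with those of $\Lambda$ by \Cref{def: type of twisted tropical punctured map} and the identity $\tilde d = d/r$. Stability at the level of the coarse map is inherited from stability on $\Punct_\Lambda(X|D)$ (or is vacuous for universal targets, where $\Orb_\Lambda(\Acal_r)$ carries no stability requirement on the underlying map). Consequently, the pair $(\Ccal \to S, \Ccal \to \Acal_r)$ is an $S$-family of objects in $\Orb_\Lambda(\Acal_r)$, and the universal property of the latter produces the desired morphism
\[
\PunctOrb_\Lambda(\Acal_r|\Dcal_r) \longrightarrow \Orb_\Lambda(\Acal_r).
\]
The only nontrivial step is the verification of representability and the matching of ages; everything else is bookkeeping transported from the tropical comparison $\Tsf_r \to \Tsf$ through the cartesian definition \eqref{diagram: defining chimera}.
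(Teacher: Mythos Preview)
Your proof is correct and follows the same approach as the paper: the morphism is produced by the universal twisted punctured map $\Ccal \to \Acal_r$ of \Cref{construction: universal family PunctOrb}, which furnishes an $S$-family in $\Orb_\Lambda(\Acal_r)$. The paper's own proof is a one-line appeal to that construction (together with a formal diagram giving the map $S \to \Mfrak^{\tw}$ directly), whereas you have usefully unpacked the verification of representability and ages that is left implicit there.
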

\begin{proof} This follows immediately from \Cref{construction: universal family PunctOrb}. The morphism $\PunctOrb_\Lambda(\Acal_r|\Dcal_r) \to \Mfrak^{\tw}$ can also be obtained formally, using the following diagram
\[
\begin{tikzcd}
\PunctOrb_\Lambda(\Acal_r|\Dcal_r) \ar[rd,dashed] \ar[rr] \ar[dd] & & \Punct_\Lambda(\Acal|\Dcal) \ar[d] \ar[dddr,bend left=10pt] \\
& \Mfrak^{\tw} \ar[r] \ar[d] \ar[rd,phantom,"\square"] & \Mfrak \ar[d] \\
\Vcal(\mathsf{T}_r) \ar[r] \ar[rrrd, bend right=10pt] & \Acal(\MTropOrb) \ar[r] & \Acal(\MTrop) \\
& & & \Vcal(\mathsf{T}) \ar[ul]
\end{tikzcd}
\]
where the morphisms through $\Vcal(\Tsf)$ verify that the outer square commutes. 
\end{proof}

\begin{lemma} \label{lem: LogOrb as fibre product over twisted curves} There is a cartesian square:
\[
\begin{tikzcd}
\PunctOrb_\Lambda(\Acal_r|\Dcal_r) \ar[r] \ar[d] \ar[rd,phantom,"\square"] & \Mfrak^{\tw} \ar[d] \\
\Vcal(\mathsf{T}_r) \ar[r] & \Acal(\MTropOrb).	
\end{tikzcd}
\]	
\end{lemma}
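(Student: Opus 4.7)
The plan is to assemble the cartesian square from three ingredients that are already in hand: the defining fibre square \eqref{diagram: defining chimera} for the chimera, the cartesian square \eqref{eqn: cartesian square punctured maps over prestable curves} (specialised to the smooth pair $(\Acal|\Dcal)$), and the cartesian square from \Cref{lem: twisted curves as log modification of curves} presenting $\Mfrak^{\tw}$ as a base change of $\Mfrak$ along $\Acal(\MTropOrb) \to \Acal(\MTrop)$.

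First I would manufacture the morphism $\Vcal(\Tsf_r) \to \Acal(\MTropOrb)$. Tropically, a type of twisted tropical punctured map (\Cref{def: type of twisted tropical punctured map}) contains, by design, the data of a twisted tropical curve (\Cref{def: type of twisted tropical curve}): the source graph with genera on vertices together with the rooting parameters $s \colon E(\Gamma) \to \Z_{\geqslant 1}$ on the edges. Forgetting the image cones, vertex positions, gerby slopes, and rooting parameters at the legs defines a forgetful morphism of cone stacks $\Tsf_r \to \MTropOrb$ compatible with specialisation. Passing to Artin fans and restricting to the puncturing substack yields $\Vcal(\Tsf_r) \to \Acal(\MTropOrb)$, which by construction lifts $\Vcal(\Tsf) \to \Acal(\MTrop)$.

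Next I would concatenate the three cartesian squares. Unwinding \eqref{diagram: defining chimera} and using \eqref{eqn: cartesian square punctured maps over prestable curves} (in the smooth pair case $k=1$) gives a chain of cartesian squares
\[
\begin{tikzcd}
\PunctOrb_\Lambda(\Acal_r|\Dcal_r) \ar[r] \ar[d] \ar[rd,phantom,"\square"] & \Punct_\Lambda(\Acal|\Dcal) \ar[r] \ar[d] \ar[rd,phantom,"\square"] & \Mfrak \ar[d] \\
\Vcal(\Tsf_r) \ar[r,"\upnu"] & \Vcal(\Tsf) \ar[r] & \Acal(\MTrop),
\end{tikzcd}
\]
so the outer rectangle is cartesian. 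The composite right-hand map $\Vcal(\Tsf_r) \to \Acal(\MTrop)$ equals the composite $\Vcal(\Tsf_r) \to \Acal(\MTropOrb) \to \Acal(\MTrop)$ by the naturality of the forgetful construction above. Combining this with \Cref{lem: twisted curves as log modification of curves} we can factor the outer rectangle as
\[
\begin{tikzcd}
\PunctOrb_\Lambda(\Acal_r|\Dcal_r) \ar[r] \ar[d] \ar[rd,phantom,"\square"] & \Mfrak^{\tw} \ar[r] \ar[d] \ar[rd,phantom,"\square"] & \Mfrak \ar[d] \\
\Vcal(\Tsf_r) \ar[r] & \Acal(\MTropOrb) \ar[r] & \Acal(\MTrop),
\end{tikzcd}
\]
in which the right-hand square is cartesian by \Cref{lem: twisted curves as log modification of curves}. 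By the two-out-of-three property for cartesian squares, the left-hand square is cartesian, which is the claim.

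The only real content is the first step, which requires checking that the morphism $\Vcal(\Tsf_r) \to \Acal(\MTropOrb)$ really does agree with the twisted curve structure on the source of the universal family constructed in \Cref{construction: universal family PunctOrb}. I expect this to be the main obstacle, but it is essentially a bookkeeping check: the extension of ghost sheaves $\Mbar_C \hookrightarrow \Mbar_C^\prime$ used there scales node smoothing parameters and marking generators by exactly the same rooting parameters $s_e$, $s_i$ that were used to define a type of twisted tropical curve; so the induced classifying map into $\Acal(\MTropOrb)$ coincides with the one produced tropically by forgetting the map data from a tropical twisted punctured map.
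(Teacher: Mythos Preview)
Your argument is correct and essentially identical to the paper's: the paper packages the same three ingredients into a cubical diagram whose front, left, and right faces are cartesian by \eqref{eqn: cartesian square punctured maps over prestable curves}, the definition of the chimera, and \Cref{lem: twisted curves as log modification of curves} respectively, and then concludes the back face is cartesian by the same two-out-of-three reasoning you use. Your final paragraph's worry is unnecessary, since the morphism $\PunctOrb_\Lambda(\Acal_r|\Dcal_r) \to \Mfrak^{\tw}$ is \emph{defined} via this fibre product (cf.\ \Cref{lem: map LogOrb to Orb}), so there is no separate compatibility to verify.
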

\begin{proof}
Consider the following cubical diagram:
\[
\begin{tikzcd}
\PunctOrb_\Lambda(\Acal_r|\Dcal_r) \ar[rr] \ar[dd] \ar[rd] & & \Mfrak^{\tw} \ar[dd] \ar[rd] \\
& \Punct_\Lambda(\Acal|\Dcal) \ar[rr,crossing over]  & & \Mfrak \ar[dd] \\
\Vcal(\mathsf{T}_r) \ar[rr] \ar[rd] & & \Acal(\MTropOrb) \ar[rd] \\
& \Vcal(\mathsf{T}) \ar[from=uu, crossing over]\ar[rr] & & \Acal(\MTrop)
\end{tikzcd}
\]
The front face is cartesian by \eqref{eqn: cartesian square punctured maps over prestable curves}. The left face is cartesian by the definition of $\PunctOrb_\Lambda(\Acal_r|\Dcal_r)$. It follows that the composition of the front and left faces is cartesian. This is equal to the composition of the back and right faces. The right face is cartesian by \Cref{lem: twisted curves as log modification of curves}. It follows that the back face is cartesian, as claimed.
\end{proof}

\section{Proof: smooth pairs}  \label{sec: proof smooth}

\noindent In the previous section we constructed the chimera $\PunctOrb_\Lambda(X_r|D_r)$ and the diagram
\[
\begin{tikzcd}
& \PunctOrb_\Lambda(X_r|D_r) \ar[ld,"\upalpha"] \ar[rd,"\upomega" {yshift=-0.35cm,xshift=-0.3cm}] & \\
\Punct_\Lambda(X|D) & & \Orb_\Lambda(X_r).
\end{tikzcd}
\]
We now establish the comparisons of moduli spaces and virtual classes, first for universal and then for general targets.

We begin by comparing the chimera with the punctured theory (\Cref{sec: chimera vs punctured}). This is direct and relatively straightforward: the chimera is constructed from the punctured moduli space by fibring over puncturing substacks, and the latter are entirely controlled by tropical data.

Comparing the chimera with the orbifold theory is significantly more challenging and subtle. We begin by showing that $\upomega$ is an isomorphism (\Cref{sec: spaces same}). This is far from formal: the two spaces have entirely different origins and are equipped with incompatible stratifications.

Having shown that the spaces are the same, our problem reduces to comparing two different obstruction theories on the same space: one corresponding to the refined punctured theory, the other corresponding to the orbifold theory. We carry out a direct comparison of these obstruction theories (\Cref{sec: comparing obstruction theories}) and thus establish the comparison for universal targets. Finally, we bootstrap from universal to general targets (\Cref{sec: correspondence smooth pairs general targets}).\medskip

\noindent \textbf{A word of caution.} Before we begin in earnest we stress an important point. The moduli space of orbifold maps
\begin{equation} \label{eqn: orbifold moduli space} \Orb_\Lambda(\Acal_r) \end{equation}
carries a virtual fundamental class, induced by an obstruction theory defined relative to the universal Picard stack over the moduli space of twisted curves \cite[Section~5.2]{AbramovichCadmanWise}. However, while the space \eqref{eqn: orbifold moduli space} is always virtually smooth, it is almost never smooth. It typically consists of several irreducible components of varying dimensions. This phenomenon occurs already in genus zero.

Consequently, virtual structures enter already at the level of universal targets. The strategy of proving birationality for universal targets and concluding via virtual pullback no longer applies. Even at the level of universal targets, a comparison of obstruction theories is required. As we have noted already, this marks a sharp departure from related prior work \cite{AbramovichCadmanWise,AbramovichMarcusWise,BNR2}.

An exception is when the numerical data $\Lambda$ does not include any negative contact/high age markings. Here a miracle occurs, and $\Orb_\Lambda(\Acal_r)$ is irreducible of the expected dimension. However even in this case, the space is not smooth: it becomes obstructed over the boundary, whenever the section vanishes on a component of the curve. Combined with the splitting formalism for twisted stable maps, this accounts for the excess dimensionality when negative contact/high age markings appear.

Since we show that the orbifold moduli space coincides with the chimera, this identifies the irreducible components of the orbifold moduli space with the irreducible components of the puncturing substack $\Vcal(\Tsf_r)$. The latter are entirely controlled by tropical geometry.

\subsection{Chimera versus punctured} \label{sec: chimera vs punctured}

\begin{theorem} \label{thm: punctured orbifold vs punctured} The morphism $\upalpha$ induces an isomorphism on coarse spaces, and satisfies:
\[ \upalpha_\star[\PunctOrb_\Lambda(\Acal_r|\Dcal_r)]^{\refined} = r^{-|P|} [\Punct_\Lambda(\Acal|\Dcal)]^{\refined}.\]
\end{theorem}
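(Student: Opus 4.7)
The plan is to bootstrap the theorem from the tropical comparison already achieved in \Cref{prop: map between puncturing substacks}. By the definition of the chimera (\Cref{def: chimera}), the morphism $\upalpha$ sits in a cartesian square with $\upnu \colon \Vcal(\Tsf_r) \to \Vcal(\Tsf)$ along the bottom, $\upphi \colon \Punct_\Lambda(\Acal|\Dcal) \to \Vcal(\Tsf)$ smooth on the right, and its base change $\upphi_r$ smooth on the left. The refined virtual classes on both chimera moduli spaces are defined by smooth pullback from the respective puncturing substack, so the entire comparison problem can be pushed down to $\Vcal(\Tsf_r) \to \Vcal(\Tsf)$.

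First I would establish the coarse space statement. Because $\upphi$ is smooth, the morphism $\upalpha$ is a base change of $\upnu$ by a flat map. The factorisation of $\upnu$ as a finite gerbe followed by an iterated root stack from \Cref{prop: map between puncturing substacks} is stable under such base change, giving an analogous factorisation of $\upalpha$. Both finite gerbes and iterated root stacks induce isomorphisms on coarse spaces, and a composition of such morphisms does as well, so the coarse space claim follows.

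For the virtual class identity, I would combine three ingredients: the definitions
\[
[\PunctOrb_\Lambda(\Acal_r|\Dcal_r)]^{\refined} = \upphi_r^\star [\Vcal(\Tsf_r)]^{\refined}, \qquad [\Punct_\Lambda(\Acal|\Dcal)]^{\refined} = \upphi^\star [\Vcal(\Tsf)]^{\refined};
\]
the compatibility of smooth pullback with proper pushforward in a cartesian square, yielding $\upalpha_\star \upphi_r^\star = \upphi^\star \upnu_\star$; and the key pushforward identity $\upnu_\star [\Vcal(\Tsf_r)]^{\refined} = r^{-|P|} [\Vcal(\Tsf)]^{\refined}$ of \Cref{prop: map between puncturing substacks}. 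Chaining these three produces the claimed rooting factor.

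The substantive work has already been done in \Cref{prop: map between puncturing substacks}, where the intricate cube involving the root stack $(\Bcal \mathbb{G}_{\mathrm{m}}^{(r)})^{P}$ and the subdivision $\Tsf_r^\prime$ was manipulated to extract the factor $r^{-|P|}$. The present theorem is then essentially formal — the only point requiring minor care is that the compatibility of refined virtual classes with base change is built into the definition, since these classes are obtained by smooth pullback rather than by intrinsic virtual pullback on the chimera.
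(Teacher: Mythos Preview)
Your proposal is correct and matches the paper's proof essentially line for line: the paper also uses the cartesian square from \Cref{def: chimera}, invokes the factorisation of $\upnu$ from \Cref{prop: map between puncturing substacks} to deduce the coarse space statement, and then chains $\upalpha_\star \upphi_r^\star = \upphi^\star \upnu_\star$ with the pushforward identity for $\upnu$ to obtain the rooting factor. Your commentary that the substantive work lies in \Cref{prop: map between puncturing substacks} is exactly right.
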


\begin{proof} From \eqref{diagram: defining chimera} and the description of $\upnu \colon \Vcal(\Tsf_r) \to \Vcal(\Tsf)$ as a finite gerbe followed by an iterated root stack (\Cref{prop: map between puncturing substacks}) we see that $\upalpha$ induces an isomorphism on coarse spaces. Moreover we have:
\[ \upalpha_\star[\PunctOrb_\Lambda(\Acal_r|\Dcal_r)]^{\refined} = \upalpha_\star \upphi_r^\star [\Vcal(\Tsf_r)]^{\refined} = \upphi^\star \upnu_\star [\Vcal(\Tsf_r)]^{\refined}. \]
Applying \Cref{prop: map between puncturing substacks} we obtain
\[ \upphi^\star \upnu_\star [\Vcal(\Tsf_r)]^{\refined} = r^{-|P|} \upphi^\star [\Vcal(\Tsf)]^{\refined} = r^{-|P|}[\Punct_\Lambda(\Acal|\Dcal)]^{\refined}\] 
as required.
\end{proof}

This establishes one half of \Cref{thm: correspondence smooth pairs} for universal targets. The next two sections are dedicated to the other half, which is significantly more delicate.

\subsection{Identifying spaces} \label{sec: spaces same} 

\begin{theorem}\label{thm: spaces same}
The morphism
\[ \upomega \colon \PunctOrb_\Lambda(\Acal_r|\Dcal_r) \to \Orb_\Lambda(\Acal_r) \]
is an isomorphism.
\end{theorem}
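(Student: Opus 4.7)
The plan is to construct an explicit inverse morphism $\upbeta \colon \Orb_\Lambda(\Acal_r) \to \PunctOrb_\Lambda(\Acal_r|\Dcal_r)$. By the fibre product description of the chimera in \Cref{def: chimera}, a $T$-point of $\PunctOrb_\Lambda(\Acal_r|\Dcal_r)$ amounts to a punctured logarithmic map $C \to (\Acal|\Dcal)$ over $T$, together with a morphism $T \to \Vcal(\Tsf_r)$ whose image in $\Vcal(\Tsf)$ matches the tropical type of the log map. Given a $T$-point of $\Orb_\Lambda(\Acal_r)$, namely a twisted curve $\Ccal \to T$ with a representable map $f \colon \Ccal \to \Acal_r$, I will manufacture this data.

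First I would extract the punctured logarithmic map. Let $C \to T$ be the coarse prestable curve. The morphism $f$ corresponds to an orbifold line bundle $\Lcal$ on $\Ccal$ with a section $s$; the tensor power $\Lcal^{\otimes r}$ is canonically the pullback of a line bundle $\Lcal_0$ on $C$, and $s^{\otimes r}$ descends to a section $s_0$ of $\Lcal_0$. The pair $(\Lcal_0,s_0)$ is a morphism $C \to \Acal$. I would endow $T$ with the minimal logarithmic structure making this a logarithmic map with the prescribed contact orders $\upalpha_i$, computed from the ages via $\operatorname{age}_i \Lcal \equiv \upalpha_i/r \pmod 1$. Where some $\upalpha_i$ is negative, the corresponding component of $C$ is forced into $D$ and the section $s_0$ vanishes there; this is precisely the setting for the puncturing construction of \cite{PuncturedMaps}, which supplies a canonical punctured enhancement.

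Second I would produce the map $T \to \Vcal(\Tsf_r)$. The twisted curve structure furnishes a map $T \to \Acal(\MTropOrb)$ via \Cref{lem: twisted curves as log modification of curves}, and the gerby tangencies $\widetilde{\upalpha}_i$ together with the gerby slopes at the nodes, read off from the isotropy action of $\Ccal$ on $\Lcal$, assemble into a map $T \to \Acal(\Tsf_r)$ lifting the coarse map $T \to \Acal(\Tsf)$ from the previous step. This lift factors through $\Vcal(\Tsf_r)$ because, following \Cref{construction: twisted tropical maps to tropical maps}, the gerby puncturing offsets differ from the coarse ones only by multiplication by the positive integers $s_e$, and so are strictly positive exactly when the latter are. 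The compatibility in $\Vcal(\Tsf)$ is tautological.

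Finally, one checks that $\upbeta$ and $\upomega$ are mutually inverse. That $\upomega \circ \upbeta = \id$ is a matter of verifying that the twisted punctured curve produced by \Cref{construction: universal family PunctOrb} from the chimera datum $\upbeta(T)$ recovers the original $(\Ccal, f)$, which is a check on stalks of ghost sheaves at markings and nodes using the explicit monoid extensions prescribed there. The main obstacle I expect is Step~1: the logarithmic structure on $T$ obtained by descending the orbifold bundle is \emph{a priori} distinct from the one pulled back from $\Mfrak^{\tw}$ via \Cref{lem: twisted curves as log modification of curves}, and a careful identification of these two logarithmic structures on $\Orb_\Lambda(\Acal_r)$ is the crux of the argument. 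Once this new logarithmic structure on $\Orb_\Lambda(\Acal_r)$ is properly identified, the remaining verifications reduce to formal consequences of the universal properties of root stacks and of the fibre product defining the chimera.
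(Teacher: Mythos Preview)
Your high-level strategy matches the paper's: construct an inverse to $\upomega$ by producing the required maps into the factors of a fibre product description of $\PunctOrb_\Lambda(\Acal_r|\Dcal_r)$. However, there are two substantive gaps.

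First, the paper uses the fibre product of \Cref{lem: LogOrb as fibre product over twisted curves}, namely $\PunctOrb_\Lambda(\Acal_r|\Dcal_r) = \Mfrak^{\tw} \times_{\Acal(\MTropOrb)} \Vcal(\Tsf_r)$, rather than the defining one over $\Vcal(\Tsf)$. This is not merely cosmetic: it bypasses your Step~1 entirely. You never need to descend $(\Lcal^{\otimes r}, s^{\otimes r})$ to the coarse curve and build a punctured logarithmic map. The morphism to $\Mfrak^{\tw}$ is just the forgetful map, and the entire content of the proof is the construction of $\Orb_\Lambda(\Acal_r) \to \Vcal(\Tsf_r)$. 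You correctly identify at the end that a new logarithmic structure on $\Orb_\Lambda(\Acal_r)$ is the crux, but your Step~2 treats its construction as a formality (``assemble into a map $T \to \Acal(\Tsf_r)$''). In the paper this is a six-step argument occupying most of the proof: one builds the ghost sheaf from tropical types, then uses the Borne--Vistoli formalism to produce the logarithmic structure by exhibiting, on each nuclear open, generalised Cartier divisors for the gerby edge lengths and vertex positions (the latter via pullback of the universal $(\Lcal,s)$ along auxiliary sections $p_v$), and finally verifying the tropical continuity relations via a local computation of vanishing orders on a transverse curve.

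Second, your justification for factoring through $\Vcal(\Tsf_r)$ is incorrect. The issue is not a tropical positivity comparison between gerby and coarse offsets; it is that the section of the generalised Cartier divisor associated to each gerby puncturing offset $\tilde\ftrop(p)$ must vanish identically on $\Orb_\Lambda(\Acal_r)$. The paper's argument uses the size hypothesis $r > |\upalpha_p|$: on any component supporting a high-age marking, $\Lcal$ pushes forward to a line bundle of negative degree on the coarse curve, forcing $s$ to vanish there, and hence $p_v^\star(s)$ vanishes identically. Without invoking this, the factorisation is unproven.
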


While this implies birationality of moduli spaces, this does \emph{not} by itself imply an equality of invariants since, as discussed above, virtual structures play a key role already for universal targets. This also explains why \Cref{thm: spaces same} appears in such a strong form. In order to compare obstruction theories we require complete control over the relative geometry of $\upomega$. Birationality or finiteness alone is insufficient, in contrast to similar arguments in related contexts.

\begin{proof} We construct an inverse to $\upomega$. By \Cref{lem: LogOrb as fibre product over twisted curves} there is a cartesian square
\[
\begin{tikzcd}
\PunctOrb_\Lambda(\Acal_r|\Dcal_r) \ar[r] \ar[d] \ar[rd,phantom,"\square"] & \Mfrak^{\tw} \ar[d] \\
\Vcal(\mathsf{T}_r) \ar[r] & \Acal(\MTropOrb),
\end{tikzcd}
\]
so it suffices to construct morphisms from $\Orb_\Lambda(\Acal_r)$ to the two factors of the fibre product. We already have the forgetful morphism
\[ \Orb_\Lambda(\Acal_r) \to \Mfrak^{\tw}\]
so it remains to construct a morphism
\begin{equation} \label{eqn: morphism giving new log structure on Orb} \Orb_\Lambda(\Acal_r) \to \Vcal(\mathsf{T}_r). \end{equation}
We will first construct a morphism to $\Acal(\mathsf{T}_r)$ and then show that it factors through the closed substack $\Vcal(\mathsf{T}_r)$. Constructing a morphism $\Orb_\Lambda(\Acal_r) \to \Acal(\mathsf{T}_r)$ amounts to constructing a new logarithmic structure on $\Orb_\Lambda(\Acal_r$), distinct from the logarithmic structure pulled back from the moduli space of twisted curves.\smallskip

\noindent
\emph{Step 1: Tropical type of a twisted map.} Given a twisted map $\Ccal \to \Acal_r$ we construct a tropical type of twisted punctured map to $(\Rplus,r)$ in the sense of \Cref{def: type of twisted tropical punctured map}. The source graph $\Gamma$ is the decorated dual graph of $\Ccal$. The gerby degree labels are given by
\[ \tilde{d}_v \colonequals \deg \Lcal|_{\Ccal_v} \]
where $\Lcal$ is the line bundle and $\Ccal_v \subseteq \Ccal$ is the irreducible component corresponding to $v \in V(\Gamma)$. The rooting parameter at an edge is given by the order of isotropy at the corresponding node. Given a vertex $v \in V(\Gamma)$ the image cone is assigned via the following rule: if the section $s \in H^0(\Ccal,\Lcal)$ vanishes identically on $\Ccal_v$ then we set $\upsigma_v=\Rplus$, and otherwise we set $\upsigma_v=0$. The image cone assigned to an edge is obtained similarly, by examining the vanishing of the section at the corresponding node.

It remains to describe the gerby slopes. These are determined uniquely by imposing the balancing condition at each vertex. Since $\Gamma$ is a tree, this system of equations is consistent and has a unique solution. This solution can be obtained explicitly by choosing a root vertex of $\Gamma$ and constructing the gerby slopes inductively, by flowing towards this root vertex. This argument occurs frequently in the tropical enumerative geometry of rational curves, see e.g. \cite[Lemma~3.4]{BNR2}.\smallskip

\noindent
\emph{Step 2: Ghost sheaf and stratification.} Given a point of $\Orb_\Lambda(\Acal_r)$, the previous step associates a tropical type $\uptau$ of twisted punctured map to $(\Rplus,r)$. This defines a strictly convex rational polyhedral cone, with a dual toric monoid. Specialisations between types give rise to face maps between cones.

Associating to each point its toric monoid, we obtain a constructible sheaf of monoids on $\Orb_\Lambda(\Acal_r)$ in the smooth topology. We denote it by $\Mbar$ and take it to be the ghost sheaf of our putative logarithmic structure.\smallskip

\noindent
\emph{Step 3: Divisors associated to generators.} Our goal is to produce a logarithmic structure $M \to \OO_{\Orb_\Lambda(\Acal_r)}$ whose associated ghost sheaf is $\Mbar$. We employ the Olsson--Borne--Vistoli formalism~\cite{OlssonLogStacks,BorneVistoli} which allows us to define a logarithmic structure as a map of symmetric monoidal stacks
\begin{equation} \label{eqn: map ghost sheaf to divisor sheaf for Orb} \Mbar \to \mathfrak{Div}_{\Orb_\Lambda(\Acal_r)},\end{equation}
where $\mathfrak{Div}_{\Orb_\Lambda(\Acal_r)}$ is the stack of generalised effective Cartier divisors, i.e. pairs $(L,s)$ of a line bundle and a section. This is a sheaf of symmetric monoidal categories on the smooth site of $\Orb_\Lambda(\Acal_r)$.

The ghost sheaf $\Mbar$ stratifies $\Orb_\Lambda(\Acal_r)$. Recall from \cite[Definition~3.39]{HMOP} the notion of a \textbf{nuclear} open set $U \subseteq \Orb_\Lambda(\Acal_r)$. Restricting to a nuclear open ensures that there is a unique minimal stratum, and that the sheaf $\Mbar$ is constant. The nuclear opens form a base for the topology on $\Orb_\Lambda(\Acal_r)$ \cite[Lemma~3.40]{HMOP}. It thus suffices to construct \eqref{eqn: map ghost sheaf to divisor sheaf for Orb} on each nuclear open, and check that these maps are compatible under restriction of nuclear opens. We write a nuclear open as
\[ U_\uptau \subseteq \Orb_\Lambda(\Acal_r) \]
where $\uptau$ is the tropical type of map to $(\Rplus,r)$ which indexes the unique minimal stratum intersecting $U_\uptau$. The monoid $\Mbar(U_\uptau)$ is then the tropical monoid dual to the cone $\uptau$. This arises as a quotient of
\begin{equation} \label{eqn: prequotient monoid} \N^{E(\Gamma)} \oplus \N^{V_+(\Gamma)} \end{equation}
where $V_+(\Gamma) \subseteq V(\Gamma)$ are the vertices mapping to $\R_{>0}$. To each of the generators above, we will associate a generalised effective Cartier divisor on $U_\uptau$. We will then verify that this association respects the relations in the quotient monoid $\Mbar(U_\uptau)$.

First consider the $\N^{E(\Gamma)}$ factor. This is generated by the gerby edge lengths $\tilde{\ell}_e$. For each edge $e \in E(\Gamma)$ we consider the source graph obtained from $\Gamma$ by contracting every edge except $e$. This determines a Cartier divisor in $\Mfrak^{\tw}$ and we let
\[ Z_{e} \in \mathfrak{Div}_{\Orb_\Lambda(\Acal_r)}(U_\uptau) \]
denote the pullback of this Cartier divisor to $U_\uptau$. We define this to be the generalised effective Cartier divisor associated to $\tilde{\ell}_e$ under \eqref{eqn: map ghost sheaf to divisor sheaf for Orb}.

We now consider the $\N^{V_+(\Gamma)}$ factor. This is generated by the gerby vertex positions $\tilde\ftrop(v)$ for $v \in V_+(\Gamma)$. Each point $\upxi \in U_\uptau$ has an associated tropical type $\uptau(\upxi)$ with a specialisation of types $\uptau \rightsquigarrow \uptau(\upxi)$ obtained by setting some edge lengths and vertex positions to zero. The vertex $v \in V(\Gamma)$ specialises to a vertex $v \rightsquigarrow v(\upxi)$ and the latter corresponds to a component
\[ \Ccal_{v(\upxi)} \subseteq \Ccal|_{\upxi} \]
in the fibre of the universal curve $\Ccal \to U_\uptau$. We now choose a section
\[ p_v \colon U_\uptau \to \Ccal \]
such that $p_v(\upxi) \in \Ccal_{v(\upxi)}$ for all $\upxi \in U_\uptau$ and $p_v$ is disjoint from all special points. Such sections always exist locally on $U_\uptau$, which is sufficient for the construction.

We define the generalised effective Cartier divisor associated to $\tilde\ftrop(v)$ as:
\[ Z_v \colonequals p_v^\star (\Lcal,s) \in \mathfrak{Div}_{\Orb_\Lambda(\Acal_r)}(U_\uptau) \]
where $(\Lcal,s)$ is the universal line bundle and section on $\Ccal$. The following observation is crucial:
\begin{equation} \label{eqn: vanishing locus of xvs} \text{$p_v^\star (s)$ vanishes at  $\upxi$ if and only if $s$ vanishes identically on $\Ccal_{v(\upxi)}$}. \end{equation}
Clearly if $s$ vanishes identically on $\Ccal_{v(\upxi)}$ then $p_v^\star(s)$ vanishes at $\upxi$. Suppose conversely that $s$ does not vanish identically on $\Ccal_{v(\upxi)}$. We claim that $\Ccal_{v(\upxi)}$ cannot contain any high age markings or nodes. Otherwise, since $r$ is large, $\Lcal$ will push forward to a line bundle of negative degree on the coarse curve, and hence will have no sections (see e.g. \cite[Section~2.1.4]{JohnsonThesis}). It follows that $\Ccal_{v(\upxi)}$ only contains low age markings and nodes, and hence pushes forward to the trivial bundle on the coarse curve. In particular it has a unique nontrivial section cutting out the special points. Since $p_v$ is disjoint from the special points, it follows that $p_v^\star(s)$ does not vanish at $\upxi$.

The observation \eqref{eqn: vanishing locus of xvs} shows that the generalised effective Cartier divisor $Z_v = p_v^\star (\Lcal,s)$ is, up to isomorphism, independent of the choice of $p_v$.\smallskip

\noindent
\emph{Step 4: Consistency with relations.} We must now show that the above associations respect the relations in $\Mbar(U_\uptau)$. These relations are indexed by oriented edges $\vec{e} \in \vec{E}(\Gamma)$ with $\widetilde{m}_{\vec{e}} \geqslant 0$, and are given by
\[ \tilde\ftrop(v_1) + \widetilde{m}_{\vec{e}} \, \tilde\ell_e = \tilde\ftrop(v_2).\]
where $v_1$ and $v_2$ are the tail and head of $\vec{e}$. Here $\widetilde{m}_{\vec{e}} \in \N$ whilst $\tilde\ftrop(v_1), \tilde\ell_e, \tilde\ftrop(v_2)$ are generators of the monoid \eqref{eqn: prequotient monoid} (if $\upsigma_{v_i}=0$ then we set $\tilde\ftrop(v_i)=0$). We must show that
\[ Z_{v_1} + \widetilde{m}_{\vec{e}} Z_{e} = Z_{v_2} \]
on $U_\uptau$. We first note that for every point $\upxi \in U_\uptau \setminus Z_{e}$ we have $v_1(\upxi) = v_2(\upxi)$. It follows from \eqref{eqn: vanishing locus of xvs} that $Z_{v_1}=Z_{v_2}$ on $U_\uptau \setminus Z_e$. We conclude that
\[ Z_{v_1} + k  Z_e = Z_{v_2} \]
for some $k \in \Z$.\footnote{A priori, instead of $k Z_e$ we have an arbitrary weighted sum of the irreducible components of $Z_{e}$. However, the upcoming proof that $k=\widetilde{m}_{\vec{e}}$ applies equally to each of these components, so \emph{a posteriori} we find that the weights are uniform.} It remains to show that $k=\widetilde{m}_{\vec{e}}$. 

Restrict to a nuclear neighbourhood $U_\upepsilon$ of the generic point of $Z_e$. Since $Z_e$ is a divisor, the associated tropical type $\upepsilon$ is one-dimensional. From $\widetilde{m}_{\vec{e}} \geqslant 0$ and $\dim \upepsilon=1$ we obtain $\tilde\ftrop(v_1)=0$ and hence $p_{v_1}^\star (s)$ never vanishes on $U_\upepsilon$ by \eqref{eqn: vanishing locus of xvs}. It follows that $Z_{v_1}=0$ on $U_\upepsilon$ and we obtain:
\[ k Z_e = Z_{v_2}.\]
To find $k$ we must compare the vanishing orders of the node smoothing parameter and $p_{v_2}^\star (s)$. For this we further restrict from $U_\upepsilon$ to a curve in the moduli space
\[ \Spec \kfield \llbracket w \rrbracket \to U_\upepsilon \]
which intersects $Z_{e}$ transversely at the closed point. The universal curve pulls back to a family
\[ \uppi \colon \Ccal \to \Spec \kfield \llbracket w \rrbracket \]
whose total space is a smooth orbifold surface. We now examine relations between monomials on $\Ccal$. We ignore units throughout.

Restrict to a neighbourhood of the node $q_e=\Ccal_{v_1} \cap \Ccal_{v_2}$ in the surface $\Ccal$. On $\Ccal$ there are equations $z_1,z_2$ for the central fibre components $\Ccal_{v_1},\Ccal_{v_2}$ satisfying
\[ z_1 z_2 = \uppi^\star(w) \]
where $w$ is the equation of $Z_e$. Since $\tilde\ftrop(v_1) = 0$ we have $s = z_2^m$ for some $m \in \N$. Since $q_e \in \Ccal_{v_1}$ has gerby tangency order $\widetilde{m}_{\vec{e}}$ we have
\[ \widetilde{m}_{\vec{e}}/s_e = \Ccal_{v_1} \cdot \mathbb{V}(s) = \Ccal_{v_1} \cdot m \Ccal_{v_2} = m/s_e \]
which gives $m=\widetilde{m}_{\vec{e}}$ and thus: 
\[ s = z_2^{\widetilde{m}_{\vec{e}}}.\]
Recall that $p_{v_2}$ is a section passing through $\Ccal_{v_2}$ and disjoint from the special points. In particular $p_{v_2}^\star (z_1)$ is invertible, and so modulo units we have:
\[ p_{v_2}^\star(s) = p_{v_2}^\star(z_2^{\widetilde{m}_{\vec{e}}}) = p_{v_2}^\star (z_1^{\widetilde{m}_{\vec{e}}}z_2^{\widetilde{m}_{\vec{e}}}) = p_{v_2}^\star \uppi^\star (w^{\widetilde{m}_{\vec{e}}}) = w^{\widetilde{m}_{\vec{e}}}\]
which gives precisely the desired relation: $Z_{v_2} = \widetilde{m}_{\vec{e}}\, Z_{e}$.\smallskip

\noindent
\emph{Step 5: Compatibility under restriction.} We have successfully constructed the map \eqref{eqn: map ghost sheaf to divisor sheaf for Orb} on nuclear opens. We must now show that these maps are compatible under restriction. Consider an inclusion
\[ U_{\uptau^\prime} \subseteq U_\uptau\]
of nuclear opens. This induces a specialisation of tropical types $\uptau \rightsquigarrow \uptau^\prime$. We need to check that the following square is $2$-commutative:
\[
\begin{tikzcd}
\Mbar(U_\uptau) \ar[r] \ar[d] & \mathfrak{Div}(U_\uptau) \ar[d] \\
\Mbar(U_{\uptau^\prime}) \ar[r] & \mathfrak{Div}(U_{\uptau^\prime}).	
\end{tikzcd}
\]
It is sufficient to check this on generators of $\Mbar(U_\uptau)$. For the generators $\tilde\ell_e$ for $e \in E(\Gamma)$ this is clear. Consider $v \in V_+(\Gamma)$ specialising to $v^\prime \in V(\Gamma^\prime)$ and examine the associated generalised effective Cartier divisors $Z_v,Z_{v^\prime}$. We must show that
\begin{equation} \label{eqn: restriction of open sets} Z_v|_{U_{\uptau^\prime}} = Z_{v^\prime} \end{equation}
i.e. that the sections have the same scheme-theoretic vanishing locus. Note that we may have $v^\prime \not\in V_+(\Gamma^\prime)$, in which case $\tilde\ftrop(v^\prime)=0$ and $Z_{v^\prime} =0 $. This does not affect the following argument.

Recall that $Z_v = p_v^\star (\Lcal,s)$. By \eqref{eqn: vanishing locus of xvs}, the vanishing locus of $p_v^\star (s)$ is a union of closed strata in $U_\uptau$, corresponding to those tropical types for which the vertex $v(\upxi)$ is mapped to $\R_{>0}$. The same description applies to $p_{v^\prime}^\star (s)$ and we immediately obtain \eqref{eqn: restriction of open sets}.\medskip

\noindent
\emph{Step 6: Factorisation through puncturing substack.} We have completed the construction of the logarithmic structure on $\Orb_\Lambda(\Acal_r)$. This gives a morphism:
\begin{equation} \label{eqn: morphism Orb to Artin fan} \Orb_\Lambda(\Acal_r) \to \Acal(\Tsf_r). \end{equation}
We now argue that this factors through the puncturing substack $\Vcal(\Tsf_r)$. It suffices to prove this on each nuclear open $U_\uptau$. Given a puncture $p \in P$ the puncturing offset morphism
\[ \Acal(\Tsf_r) \to \Acal \]
gives the generalised effective Cartier divisor corresponding to the piecewise linear function $\tilde\ftrop(v)$ where $v \in V(\Gamma)$ is the vertex supporting the puncture $p$. We must show that the corresponding section $p_v^\star(s)$ vanishes identically. By \eqref{eqn: vanishing locus of xvs} it is equivalent to show that the section $s$ vanishes identically on the curve component $\Ccal_v$.

Here we use that $r$ is large and that $p$ is a marking of high age. As in Step 3 above, this implies that the line bundle $\Lcal|_{\Ccal_v}$ pushes forward to a line bundle on the coarse curve of negative degree, and hence has no sections. It follows that $s$ must vanish identically, as required. We conclude that \eqref{eqn: morphism Orb to Artin fan} factors through the closed substack
\[ \Orb_\Lambda(\Acal_r) \to \Vcal(\Tsf_r)\]
as required. We obtain a diagram
\bcd
\Orb_\Lambda(\Acal_r) \ar[rd,dashed] \ar[rrd,bend left] \ar[ddr, bend right] & & \\
& \PunctOrb_\Lambda(\Acal_r|\Dcal_r) \ar[r] \ar[d] \ar[rd,phantom,"\square"] & \Mfrak^{\tw} \ar[d] \\
& \Vcal(\mathsf{T}_r) \ar[r] & \Acal(\MTropOrb)
\ecd
with the induced map $\Orb_\Lambda(\Acal_r) \to \PunctOrb_\Lambda(\Acal_r|\Dcal_r)$ giving an inverse to $\upomega$. We conclude that $\upomega$ is an isomorphism.
\end{proof}

\subsection{Identifying virtual classes}\label{sec: comparing obstruction theories}

The above \Cref{thm: spaces same} establishes an identification of moduli spaces:
\[ \PunctOrb_\Lambda(\Acal_r|\Dcal_r) = \Orb_\Lambda(\Acal_r).\]
When we wish to avoid privileging one perspective over the other, we will use the notation:
\[ \Kup_\Lambda(\Acal_r|\Dcal_r) \colonequals \PunctOrb_\Lambda(\Acal_r|\Dcal_r) = \Orb_\Lambda(\Acal_r).\]
This space carries two Chow classes of the same dimension: the refined punctured class defined in \Cref{sec: refined punctured class}, and the orbifold virtual fundamental class. In this section we identify these (\Cref{thm: classes same}). Combined with \Cref{thm: punctured orbifold vs punctured}, this establishes \Cref{thm: correspondence smooth pairs} for universal targets.
\begin{theorem} \label{thm: classes same} There is an equality of classes
\begin{equation} \label{eqn: equality of classes} [\PunctOrb_\Lambda(\Acal_r|\Dcal_r)]^{\refined} = [\Orb_\Lambda(\Acal_r)]^{\virt}	\end{equation}
in the Chow homology of $\Kup_\Lambda(\Acal_r|\Dcal_r)$.
\end{theorem}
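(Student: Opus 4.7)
The plan is to exhibit both classes as refined intersections on a common auxiliary moduli space of orbifold maps with only low-age markings, and to reduce the comparison to a matching of line bundles-with-sections in two different frameworks.

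First I would introduce \emph{positivised} numerical data $\Lambda^+$ obtained from $\Lambda$ by replacing each high-age puncture with its complementary low-age version (concretely, shifting each negative $\upalpha_{ij}$ up to $\upalpha_{ij} + r_j > 0$). Since all markings in $\Lambda^+$ have low age and no section is forced to vanish on a component of the source, the moduli space $\Orb_{\Lambda^+}(\Acal_r)$ is irreducible and logarithmically smooth over $\Mfrak^{\tw}$, with orbifold virtual class equal to its fundamental class. The key geometric input is a natural closed embedding
\[
\upxi \colon \Kup_\Lambda(\Acal_r|\Dcal_r) \hookrightarrow \Orb_{\Lambda^+}(\Acal_r)
\]
cut out by the vanishing of the universal section at each puncture, viewed as a global section of the rank-$|P|$ evaluation bundle $\EE^{\ev} \colonequals \bigoplus_{p \in P} \ev_p^\star \Lcal^+$. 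Equivalently, $\upxi$ is obtained as the pullback of the zero section $\Bcal \Gm^P \hookrightarrow [\EE^{\ev}/\Gm^P]$.

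The core of the proof is then to check that each class on $\Kup_\Lambda(\Acal_r|\Dcal_r)$ agrees with the Gysin pullback $0_{\EE^{\ev}}^![\Orb_{\Lambda^+}(\Acal_r)]$. For the orbifold virtual class, this comes from a short exact sequence on the universal orbifold curve relating $\Lcal$ and $\Lcal^+$, whose quotient supported at the punctures is the evaluation contribution. After applying $\Rder \uppi_\star$ one obtains a distinguished triangle identifying the perfect obstruction theory $(\Rder \uppi_\star \Lcal)^\vee$ of $\Orb_\Lambda(\Acal_r) \to \Mfrak^{\tw}$ as an extension of $(\Rder \uppi_\star \Lcal^+)^\vee$ by $(\EE^{\ev})^\vee$; functoriality of virtual pullback then yields the orbifold class as the claimed Gysin pullback. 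For the refined punctured class I would invoke the dictionary between piecewise linear functions on the Artin fan and generalised Cartier divisors constructed in Step 3 of the proof of \Cref{thm: spaces same}: under the logarithmic structure built there, the puncturing offset morphism $\Acal(\Tsf_r) \to \Acal^P$ corresponds precisely to the sections $(\ev_p^\star s)_{p \in P}$ of $\EE^{\ev}$. Consequently the defining Cartesian square of $\Vcal(\Tsf_r)$ pulls back along $\upphi_r$ to the Cartesian square of $\upxi$, and $\upphi_r^\star \upiota_r^![\Acal(\Tsf_r)]$ is likewise $0_{\EE^{\ev}}^![\Orb_{\Lambda^+}(\Acal_r)]$.

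The principal obstacle lies in this last identification: the piecewise linear functions on the Artin fan must be matched not merely to the line bundles $\ev_p^\star \Lcal^+$ but to their specific tautological sections. This requires sharpening the logarithmic structure construction from \Cref{thm: spaces same} so that the generalised Cartier divisors $Z_v = p_v^\star(\Lcal, s)$ are tracked as pairs (bundle, section), and checking their functorial behaviour under the transition between $\Lambda^+$ and $\Lambda$. Once this sectional information is in place, the two Gysin pullbacks coincide on the nose, yielding \eqref{eqn: equality of classes}.
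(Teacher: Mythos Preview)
Your overall strategy matches the paper's: positivise the numerical data, embed $\Kup_\Lambda$ as a closed substack of an irreducible auxiliary space whose virtual class equals its fundamental class, then show both classes arise as the same refined pullback by producing a compatible triple of obstruction theories. However, your specific positivisation creates a real problem. Shifting each $\upalpha_p$ to $\upalpha_p + r$ leaves the age of the universal line bundle at $\tilde{p}$ unchanged and nonzero, so $\upmu_{s_p}$ acts nontrivially on the fibre $\Lcal^+|_{\tilde{p}}$ and the invariant pushforward $\uppi_\star(\Lcal^+|_{\tilde{p}})$ vanishes. Thus $\ev_p^\star \Lcal^+$ as written is the zero bundle, your $\EE^{\ev}$ is zero, and the ``vanishing of the section at each puncture'' cuts out nothing. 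The paper instead sets the puncture contacts to zero, obtaining $\Lcal_+ = \Lcal(\sum_p \tilde{c}_p \tilde{p})$ with $\tilde{c}_p = -\widetilde\upalpha_p$; this makes $\operatorname{age}_{\tilde{p}}\Lcal_+ = 0$, so $\uppi_\star(\Lcal_+|_{\tilde{p}})$ is an honest line bundle. Even then one must check that the cokernel $\uppi_\star(\Lcal_+|_{\tilde{c}_p\tilde{p}})$ appearing in the pushforward of the short exact sequence collapses to $\uppi_\star(\Lcal_+|_{\tilde{p}})$; this is the paper's lemma following \Cref{prop: line bundle associated to gerby puncturing offset}, and it uses the size hypothesis $r > |\upalpha_p|$.

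Your appeal to Step~3 of \Cref{thm: spaces same} also does not close the argument. That step identifies the generalised Cartier divisor attached to $\tilde\ftrop(p)$ with $p_v^\star(\Lcal,s)$ for an auxiliary section $p_v$ through the component $\Ccal_v$ supporting the puncture, away from special points. This pins down the vanishing locus correctly, but for the compatible triple you need the \emph{line bundle} $\OO_S(\tilde\ftrop(p))$ to coincide with the cokernel term $\uppi_\star(\Lcal_+|_{\tilde{p}})$ coming from the short exact sequence. That identification is not a formal consequence of Step~3; it is precisely the content of the paper's \Cref{prop: line bundle associated to gerby puncturing offset}, proved by a ``there and back again'' manipulation of the piecewise-linear function $\ptrop^\star\tilde\ftrop(p) - \tilde\ftrop^\star(1)$ on the universal curve. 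Without it the triangle $\uptheta^\star\EE_\uprho \to \EE_\upeta \to \EE_\uptheta$ does not assemble.
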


\subsubsection{Positivised numerical data} \label{sec: positivised numerical data} We begin by removing the tangency conditions at the punctures. This produces an auxiliary moduli space of stable logarithmic maps with positive tangency, which we use to relate the two classes.

\begin{definition} The \textbf{positivised numerical data} $\Lambda_+$ is obtained from the numerical data $\Lambda$ by making the following modifications:
\begin{itemize}
	\item For each puncture $p \in P$ we replace the contact order $\upalpha_p$ by $0$ and the rooting index $s_p$ by $1$.
	\item We replace the total degree $d$ by $d - \Sigma_{p \in P} \upalpha_p$.
\end{itemize}
Notice that the total degree increases. Here is an $r=1$ example, writing $\Lambda$ as $(g,d,(\upalpha_1,\ldots,\upalpha_n))$:
\[ \Lambda=(0,1,(4,-1,-2)) \rightsquigarrow \Lambda_+ = (0,4,(4,0,0)).\]
\end{definition}
The positivised numerical data defines a tropical moduli space as in \Cref{sec: tropical twisted punctured maps}. We temporarily denote this by $\mathsf{T}_r^+$.

\begin{lemma} \label{lem: positivised tropical moduli equals original tropical moduli} There is a natural isomorphism of cone stacks $\mathsf{T}_r=\mathsf{T}_r^+$.\end{lemma}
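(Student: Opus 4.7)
The plan is to exhibit a natural bijection between tropical types of twisted punctured maps for $\Lambda$ and for $\Lambda_+$, observe that it preserves cones and face maps, and thereby deduce an isomorphism of cone stacks $\mathsf{T}_r \cong \mathsf{T}_r^+$.

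The crucial observation I would rely on is that the cone attached to a tropical type (Definition~\ref{def: type of twisted tropical punctured map}) depends only on the \emph{edge data}: the graph $\Gamma$, the edge rooting parameters $s \colon E(\Gamma) \to \Z_{\geqslant 1}$, the image cones $\upsigma_v, \upsigma_e$, and the gerby slopes $\widetilde{m}_{\vec{e}}$. Indeed, the cone is cut out of the orthant in $(\tilde\ell_e, \tilde{\ftrop}(v))$ by the continuity equations \eqref{eqn: gerby continuity}, and the divisibility, coprimality and size conditions are imposed only at edges. The gerby degrees $\tilde{d}_v$ and leg slopes $\widetilde\upalpha_i$ enter only through the balancing relation \eqref{eqn: gerby balancing}, which selects which tuples of edge data constitute a valid type.

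With this in mind, I would define the bijection as follows. Given a type $\uptau$ for $\Lambda$, form $\uptau^+$ by retaining $\Gamma$, the vertex genera, the edge rooting parameters, the image cones, and the gerby slopes; relabelling the former punctures as ordinary markings and setting $\widetilde\upalpha^+_p = 0$; and adjusting the gerby degrees by
\[ \tilde{d}^+_v \colonequals \tilde{d}_v - \sum_{\substack{p \in P \\ v \in p}} \frac{\widetilde\upalpha_p}{s_p} = \tilde{d}_v - \sum_{\substack{p \in P \\ v \in p}} \frac{\upalpha_p}{r}. \]
Summing over vertices gives $\tilde{d}^+ = (d - \sum_{p \in P} \upalpha_p)/r = d^+/r$, so the total gerby degree matches. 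Balancing at each vertex for $\uptau^+$ then follows by subtracting the puncture contributions from the balancing relation for $\uptau$, using $\widetilde\upalpha^+_p = 0$; the edge-level conditions are untouched. The reverse construction is completely analogous and yields an inverse.

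I do not anticipate any substantial obstacle; the proof is essentially a bookkeeping exercise. The points to watch are (i)~that the redistributed gerby degrees remain in $\tfrac{1}{r}\Z$, which is automatic since $\widetilde\upalpha_p/s_p = \upalpha_p/r$; (ii)~that the rooting data for $\Lambda_+$, in which coprimality forces $s^+_p = 1$ at former punctures, remains consistent with the tropical moduli, since $\widetilde\upalpha^+_p/s^+_p = 0$ irrespective of $s^+_p$; and (iii)~that face maps, i.e.\ specialisations of types, correspond to edge contractions and act identically on the edge data of $\uptau$ and $\uptau^+$. Once these checks are in place, the colimit diagrams defining $\mathsf{T}_r$ and $\mathsf{T}_r^+$ are identified, establishing the isomorphism.
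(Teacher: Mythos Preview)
Your proposal is correct and follows essentially the same approach as the paper: both construct the bijection of types by keeping all edge data fixed and shifting the gerby vertex degrees by $\tilde d_v \mapsto \tilde d_v - \sum_{v\in p} \widetilde\upalpha_p/s_p$, then observe that cones and specialisations depend only on the unchanged edge data. Your write-up is in fact more explicit about the bookkeeping (the $\tfrac{1}{r}\Z$ check, balancing, and compatibility with face maps) than the paper's own proof.
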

\begin{proof} Consider a tropical type indexing a cone in $\mathsf{T}_r$. We associate a tropical type indexing a cone in $\mathsf{T}_r^+$ by subtracting from each gerby vertex degree the sum of gerby tangencies at adjacent punctures divided by rooting parameters:
\[ \tilde{d}_v \mapsto \tilde{d}_v - \sum_{v \in p} \frac{\widetilde{\upalpha}_p}{s_p}.\]
Since $\widetilde{\upalpha}_p/s_p=\upalpha_p/r$, the new gerby degree still belongs to $\tfrac{1}{r}\Z$. The inverse association is defined analogously. An example with $r=1$ is illustrated below, with vertex degrees and edge slopes indicated in blue:
\[
\begin{tikzpicture}[scale=1.3]


\draw[fill=black] (-2,-1) circle[radius=1.5pt];
\draw[blue] (-2,-1) node[above]{\tiny$3$};

\draw[->] (-2,-1) -- (-1.5,-0.75);
\draw (-1.5,-0.75) -- (-1,-0.5);x
\draw[blue] (-1.55,-0.775) node[above]{\tiny$1$};

\draw[fill=black] (-1,-0.5) circle[radius=1.5pt];
\draw[blue] (-1,-0.5) node[above]{\tiny$0$};

\draw[->] (-1,-0.5) -- (-0.25,-0.5);
\draw[blue] (-0.325,-0.5) node[right]{\tiny$4$};

\draw[->] (-1,-0.5) -- (-2,0);
\draw[blue] (-1.925,0) node[left]{\tiny$3$};

\draw[->] (-2,-1) -- (-1.25,-1.25);
\draw (-1.25,-1.25) -- (-0.5,-1.5);
\draw[blue] (-1.25,-1.25) node[above]{\tiny$2$};

\draw[fill=black] (-0.5,-1.5) circle[radius=1.5pt];
\draw[blue] (-0.5,-1.5) node[above]{\tiny$1$};

\draw[->] (-0.5,-1.5) -- (0.25,-1.5);
\draw[blue] (0.175,-1.5) node[right]{\tiny$3$};

\draw[fill=blue,blue] (-2,-2.3) circle[radius=1.5pt];
\draw[blue,->] (-2,-2.3) -- (0.5,-2.3);

\draw[blue,->] (-0.75,-1.7) -- (-0.75,-2.1);

\draw (-0.75,-2.5) node[below]{\small$\Tsf_r$};


\draw[<->] (1.5,-1) -- (2,-1);



\draw[fill=black] (3,-1) circle[radius=1.5pt];
\draw[blue] (3,-1) node[above]{\tiny$3$};

\draw[->] (3,-1) -- (3.5,-0.75);
\draw (3.5,-0.75) -- (4,-0.5);
\draw[blue] (3.45,-0.775) node[above]{\tiny$1$};

\draw[fill=black] (4,-0.5) circle[radius=1.5pt];
\draw[blue] (4,-0.5) node[below]{\tiny$3$};

\draw[->] (4,-0.5) -- (4.75,-0.5);
\draw[blue] (4.675,-0.5) node[right]{\tiny$4$};

\draw[->] (4,-0.5) -- (4,-0.2);
\draw[blue] (4,-0.25) node[above]{\tiny$0$};

\draw[->] (3,-1) -- (3.75,-1.25);
\draw (3.75,-1.25) -- (4.5,-1.5);
\draw[blue] (3.75,-1.25) node[above]{\tiny$2$};

\draw[fill=black] (4.5,-1.5) circle[radius=1.5pt];
\draw[blue] (4.5,-1.5) node[above]{\tiny$1$};

\draw[->] (4.5,-1.5) -- (5.25,-1.5);
\draw[blue] (5.175,-1.5) node[right]{\tiny$3$};

\draw[fill=blue,blue] (3,-2.3) circle[radius=1.5pt];
\draw[blue,->] (3,-2.3) -- (5.5,-2.3);

\draw[blue,->] (4.25,-1.7) -- (4.25,-2.1);

\draw (4.25,-2.5) node[below]{\small$\Tsf_r^+$};


\end{tikzpicture}
\]
This establishes a bijection between tropical types, compatible with the specialisation ordering. This bijection extends to a natural isomorphism between cones, giving the desired isomorphism of cone stacks.
\end{proof}

\subsubsection{Negative inside positive}\label{sec: minus-inside-plus} The isomorphism of cone stacks established in \Cref{lem: positivised tropical moduli equals original tropical moduli} produces an isomorphism $\Acal(\mathsf{T}_r^+) = \Acal(\mathsf{T}_r)$. Crucially, since $\Lambda_+$ involves no punctures, the puncturing substack coincides with the Artin fan:
\[ \Vcal(\mathsf{T}_r^+) = \Acal(\mathsf{T}_r^+).\]
Combining with \Cref{lem: LogOrb as fibre product over twisted curves}, and noting that \Cref{thm: spaces same} applies equally to $\Lambda_+$ as to $\Lambda$, we obtain a cartesian square:
\begin{equation} \label{eqn: positive tangency auxiliary space}
\begin{tikzcd}
\Kup_{\Lambda_+}(\Acal_r|\Dcal_r) \ar[r] \ar[d] \ar[rd,phantom,"\square"] & \Mfrak^{\tw} \ar[d] \\
\Acal(\mathsf{T}_r) \ar[r] & \Acal(\MTropOrb).
\end{tikzcd}
\end{equation}
From this we obtain the following diagram
\[
\begin{tikzcd}
\Kup_\Lambda(\Acal_r|\Dcal_r) \ar[r,hook,"\uptheta"] \ar[d,"\upmu"] \ar[rd,phantom,"\text{A}"] & \Kup_{\Lambda_+}(\Acal_r|\Dcal_r) \ar[r] \ar[d,"\upnu"] \ar[rd,phantom,"\text{B}"] & \Mfrak^{\tw} \ar[d] \\
\Vcal(\mathsf{T}_r) \ar[r,hook] \ar[d,"\uplambda"] \ar[rd,phantom,"\square"] & \Acal(\mathsf{T}_r) \ar[r] \ar[d] & \Acal(\MTropOrb) \\
\Bcal \Gm^{P} \ar[r,hook,"\upiota"] & \Acal^{P}
\end{tikzcd}
\]
and since both B and A+B are cartesian, it follows that A is cartesian.

\begin{remark} \label{rmk: negative inside positive} The closed embedding
\[ \uptheta \colon \Kup_\Lambda(\Acal_r|\Dcal_r) \hookrightarrow \Kup_{\Lambda_+}(\Acal_r|\Dcal_r) \]
realises the negative contact space as a union of closed strata in the positive contact space. This union of closed strata is precisely the locus where every ``former'' puncture belongs to an irreducible component of the source curve on which the section vanishes identically. The closed embedding is given explicitly by retaining the marked source curve and modifying the line bundle as follows
\[ \Lcal \mapsto \Lcal \left(-\Sigma_{p \in P} \widetilde{\upalpha}_p \tilde{p} \right), \]
where $\tilde{p} \subseteq \Ccal$ are the gerby punctures (see \Cref{sec: description of POTs} below). The section lifts uniquely, since it vanishes identically on every component containing a puncture. This closed embedding of moduli spaces only exists at the level of universal targets.
\end{remark}

The vertical arrows $\upmu, \upnu$ are smooth since $\Mfrak^{\tw} \to \Acal(\MTropOrb)$ is smooth. It follows that $\Kup_{\Lambda_+}(\Acal_r|\Dcal_r)$ is irreducible, since it is smooth over an irreducible stack. From the definition of the refined punctured class we obtain:
\[
[\PunctOrb_\Lambda(\Acal_r|\Dcal_r)]^{\refined} = \upmu^\star \upiota^! [\Acal(\mathsf{T}_r)] = \upiota^! \upnu^\star [\Acal(\mathsf{T}_r)] = \upiota^! [\Kup_{\Lambda_+}(\Acal_r|\Dcal_r)].
\]
The regular embedding $\upiota$ naturally carries a perfect obstruction theory. This pulls back to a perfect obstruction theory for $\uptheta$ given by:
\begin{equation} \label{eqn: Etheta} \EE_\uptheta = \upmu^\star \uplambda^\star N_{\upiota}^\vee[1].\end{equation}
The associated virtual pullback coincides with the refined Gysin pullback $\upiota^!$ and we obtain:
\begin{equation} \label{eqn: pullback from positive to negative space} [\PunctOrb_\Lambda(\Acal_r|\Dcal_r)]^{\refined} = \uptheta^! [\Kup_{\Lambda_+}(\Acal_r|\Dcal_r)].\end{equation}
Now consider the triangle of spaces
\[
\begin{tikzcd}
\Kup_\Lambda(\Acal_r|\Dcal_r) \ar[r,hook,"\uptheta"] \ar[rr,bend right=15pt,"\upeta"] & \Kup_{\Lambda_+}(\Acal_r|\Dcal_r) \ar[r,"\uprho"] & \Mfrak^{\tw}(\Bcal \mathbb{G}_{\mathrm{m},r})
\end{tikzcd}
\]
where the morphisms to $\Mfrak^{\tw}(\Bcal \mathbb{G}_{\mathrm{m},r})$ are obtained from the morphism constructed in \Cref{lem: map LogOrb to Orb}. Viewing the first two spaces as moduli spaces of orbifold maps, both $\upeta$ and $\uprho$ carry perfect obstruction theories, such that
\begin{align}
\label{eqn: orb class as pullback from Pic} [\Orb_\Lambda(\Acal_r)]^{\virt} & = \upeta^![\Mfrak^{\tw}(\Bcal \mathbb{G}_{\mathrm{m},r})], \\
\label{eqn: orb positivised class as pullback from Pic} [\Orb_{\Lambda_+}(\Acal_r)]^{\virt} & = \uprho^![\Mfrak^{\tw}(\Bcal \mathbb{G}_{\mathrm{m},r})].
\end{align}
However, since the numerical data $\Lambda_+$ is positive and the genus is zero, the moduli space $\Orb_{\Lambda_+}(\Acal_r)$ is irreducible and contains the locus of maps with smooth source curve and nontrivial section as a dense open \cite[Lemma~4.2.3]{AbramovichCadmanWise}. On this locus the morphism
\[ \Orb_{\Lambda_+}(\Acal_r) \to \Mfrak^{\tw}(\Bcal \mathbb{G}_{\mathrm{m},r}) \]
is smooth, and the perfect obstruction theory coincides with the relative cotangent bundle. We conclude that the virtual fundamental class agrees with the fundamental class:
\[ [\Orb_{\Lambda_+}(\Acal_r)]^{\virt} = [\Kup_{\Lambda_+}(\Acal_r|\Dcal_r)].\]
Combining this with \eqref{eqn: pullback from positive to negative space} and \eqref{eqn: orb positivised class as pullback from Pic} we obtain
\begin{equation} \label{eqn: punctured class as pullback from Pic} [\PunctOrb_\Lambda(\Acal_r|\Dcal_r)]^{\refined} = \uptheta^!\uprho^! [\Mfrak^{\tw}(\Bcal \mathbb{G}_{\mathrm{m},r})].\end{equation}
From \eqref{eqn: orb class as pullback from Pic} and \eqref{eqn: punctured class as pullback from Pic}, we see that \Cref{thm: classes same} follows if we can show $\uptheta^! \uprho^! = \upeta^!$. We will achieve this by forming a compatible triple of obstruction theories \cite[Theorem~4.8]{ManolachePull}.

\subsubsection{Obstruction theories} \label{sec: description of POTs} We first describe $\EE_\upeta$ and $\EE_\uprho$. These are given by 
\begin{align} \label{eqn: POT for eta}
\EE_\upeta & = (\Rder \uppi_\star \Lcal)^\vee \qquad \\ 
\label{eqn: POT for rho} \uptheta^\star \EE_\uprho &= (\Rder \uppi_\star \Lcal_+)^\vee
\end{align}
where $\Lcal_+$ is the universal line bundle for $\Kup_{\Lambda_+}(\Acal_r|\Dcal_r)$. The bundles $\Lcal$ and $\Lcal_+$ are related as follows. Given a puncture $p \in P$ with gerby tangency $\widetilde\upalpha_p \in \Z$, we define
\[ \tilde{c}_p \colonequals -\widetilde\upalpha_p > 0\] 
and refer to this as the \textbf{multiplicity} of the puncture. We let $\tilde{p} \subseteq \Ccal$ denotes the gerby puncture on the universal curve. Consider the effective divisor
\[ \Dcal \colonequals \sum_{p \in P} \tilde{c}_p \tilde{p}.\]
Then we have
\[ \Lcal_+ = \Lcal(\Dcal)\]
from which we obtain a short exact sequence
\[ 0 \to \Lcal \to \Lcal_+ \to \Lcal_+|_\Dcal \to 0.\]
Applying $\Rder\uppi_\star$ we obtain the following exact triangle
\[ \Rder\uppi_\star \Lcal \to \Rder \uppi_\star \Lcal_+ \to \Rder \uppi_\star \Lcal_+|_D \xrightarrow{[1]} \]
The final term is supported on a zero-dimensional stack, and so
\begin{equation} \label{eqn: exact triangle pushforward}  \Rder \uppi_\star \Lcal_+|_D = \uppi_\star \Lcal_+|_D = \bigoplus_{p \in P} \uppi_\star ( \Lcal_+|_{\tilde{c}_p \tilde{p}}).\end{equation}

We now consider $\EE_\uptheta$. This is given by \eqref{eqn: Etheta}, and the vector bundle $\upmu^\star \uplambda^\star N_{\upiota}$ splits as a direct sum of line bundles associated to the gerby puncturing offsets, defined in \Cref{sec: tropical twisted punctured maps}. For simplicity write $S \colonequals \PunctOrb_\Lambda(\Acal_r|\Dcal_r)$.

\begin{proposition} \label{prop: line bundle associated to gerby puncturing offset} The line bundle associated to the gerby puncturing offset $\tilde\ftrop(p)$ is:
	\[ \OO_{S}(\, \tilde\ftrop(p)) = \uppi_\star (\Lcal_+|_{\tilde{p}}). \]
\end{proposition}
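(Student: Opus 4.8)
The plan is to identify both sides of the claimed equality with a generalised effective Cartier divisor pulled back along a section of the universal curve, and to match the two descriptions combinatorially via the tropical dictionary established in \Cref{sec: tropical twisted punctured maps}. First I would unwind the left-hand side: by the construction of \Cref{def: chimera} (specifically the fibre diagram \eqref{diagram: defining chimera}) the line bundle $\OO_S(\tilde\ftrop(p))$ is the pullback to $S$ of the line bundle on $\Acal(\Tsf_r)$ associated, under the Olsson--Borne--Vistoli correspondence, to the piecewise linear function $\tilde\ftrop(p)$ on $\Tsf_r$. Using the factorisation $S \to \Vcal(\Tsf_r) \hookrightarrow \Acal(\Tsf_r)$ and the description of $\tilde\ftrop(p)$ from \Cref{sec: tropical twisted punctured maps} --- namely $\tilde\ftrop(p) = \tilde\ftrop(v)$ where $v$ supports the puncture $p$ --- this reduces the question to understanding the line bundle attached to the gerby vertex position $\tilde\ftrop(v)$.

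Next I would recall, as in the proof of \Cref{thm: spaces same} (Step 3), that the generalised effective Cartier divisor attached to $\tilde\ftrop(v)$ is computed on a nuclear open $U_\uptau$ as $Z_v = p_v^\star(\Lcal, s)$, where $p_v$ is a section through the component $\Ccal_v$ disjoint from all special points. The crucial point is that, for a puncture $p$ supported at $v$, the section $s$ vanishes identically on $\Ccal_v$ --- this was established in Step 6 of the proof of \Cref{thm: spaces same} using that $r$ is large and $p$ has high age, so $\Lcal|_{\Ccal_v}$ pushes to a negative-degree bundle on the coarse curve. Hence on $S$ the component $\Ccal_v$ coincides with the component on which the section vanishes near the puncture, and the line bundle $\OO_S(\tilde\ftrop(v))$ is exactly $\uppi_\star$ of the restriction of $\Lcal_+$ to the gerby puncture $\tilde p$: concretely, the order of vanishing of $s$ along $\tilde p$ (equivalently, the coefficient $\tilde c_p = -\widetilde\upalpha_p$ in $\Dcal$) is precisely what the gerby puncturing offset measures, by the gerby balancing/continuity relation \eqref{eqn: gerby continuity} applied along the marking leg. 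So the identification $\OO_S(\tilde\ftrop(p)) = \uppi_\star(\Lcal_+|_{\tilde p})$ follows by comparing the two first-order data: the normal bundle $N_\upiota$ restricted through $\uplambda$ computes the same line bundle, summand by summand over $p \in P$, as the skyscraper pushforwards appearing in \eqref{eqn: exact triangle pushforward}.

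The main obstacle, I expect, is making the last matching \emph{canonical} rather than merely numerical: one must check that the isomorphism is natural in $S$, i.e.\ compatible under restriction between nuclear opens and independent of the auxiliary section $p_v$. The independence of $p_v$ was already handled in Step 3 of \Cref{thm: spaces same} via observation \eqref{eqn: vanishing locus of xvs}, and compatibility under restriction was Step 5 there; the new content here is to verify that the tautological identification $N_\upiota = \OO_{\Acal^P}(1)^{\oplus P}$ pulled back along $\uplambda \circ \upmu$ matches the line bundle extracted from the logarithmic structure on $\Orb_\Lambda(\Acal_r)$ constructed in \Cref{thm: spaces same}, and that this in turn is $\uppi_\star(\Lcal_+|_{\tilde p})$. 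This is essentially bookkeeping once one observes that the puncturing-offset piecewise linear function $\tilde\ftrop(p)$ corresponds under the Artin-fan dictionary to the generalised Cartier divisor $p_v^\star(\Lcal,s)$, whose associated line bundle is $p_v^\star \Lcal = p_v^\star \Lcal_+(-\tilde c_p \tilde p)$; restricting the defining short exact sequence $0 \to \Lcal \to \Lcal_+ \to \Lcal_+|_\Dcal \to 0$ to the component $\Ccal_v$ on which $s$ vanishes and pushing forward isolates exactly the summand $\uppi_\star(\Lcal_+|_{\tilde p})$, giving the claim.
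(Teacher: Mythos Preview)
Your proposal has a genuine gap at the crucial step. You correctly extract from Step~3 of the proof of \Cref{thm: spaces same} that the line bundle underlying the generalised Cartier divisor $Z_v$ is $p_v^\star \Lcal$ for a section $p_v$ through $\Ccal_v$ disjoint from the special points, so that $\OO_S(\tilde\ftrop(p)) \cong p_v^\star \Lcal$. But your passage from $p_v^\star \Lcal$ to $\uppi_\star(\Lcal_+|_{\tilde p})$ is not an argument. Restricting the short exact sequence $0 \to \Lcal \to \Lcal_+ \to \Lcal_+|_\Dcal \to 0$ to $\Ccal_v$ and pushing forward produces a long exact sequence of higher direct images; it does not ``isolate'' the summand $\uppi_\star(\Lcal_+|_{\tilde p})$ nor compare it to $p_v^\star \Lcal$. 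The reference to the continuity relation \eqref{eqn: gerby continuity} ``along the marking leg'' is also not to the point: that relation governs how gerby vertex positions change across \emph{edges}, not how the line bundle $\Lcal$ restricts from a generic section of $\Ccal_v$ to the gerby marked point $\tilde p$. The latter is a genuinely local computation on the curve which your proposal never performs.

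The paper's proof supplies exactly this missing computation via the ``there and back again'' technique. One pulls the piecewise linear function $\tilde\ftrop(p)$ up to the tropicalisation $\sqC$ of the universal curve and compares it to the tautological function $\tilde\ftrop^\star(1)$ coming from the target. The difference $\ptrop^\star\tilde\ftrop(p) - \tilde\ftrop^\star(1)$ vanishes on the vertex $v$ but has slope $-\widetilde\upalpha_i$ along each adjacent marking and $-\widetilde m_{\vec e}$ along each adjacent edge; translating into line bundles gives
\[
\big(\uppi^\star \OO_S(\tilde\ftrop(p)) \otimes \Lcal^{-1}\big)\big|_{\Ccal_v} \;=\; \OO_{\Ccal_v}\!\left(\textstyle\sum_{v\in i}(-\widetilde\upalpha_i)\tilde p_i + \sum_{v\in e}(-\widetilde m_{\vec e})\tilde q_e\right),
\]
and \emph{then} restricting to $\tilde p$ yields $\uppi^\star\OO_S(\tilde\ftrop(p))|_{\tilde p} = \Lcal(\tilde c_p\tilde p)|_{\tilde p} = \Lcal_+|_{\tilde p}$. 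A final age check (showing $\operatorname{age}_{\tilde p}\Lcal_+ = 0$) allows one to apply $\uppi_\star$ and recover $\OO_S(\tilde\ftrop(p))$. This slope computation on $\Ccal_v$ is the substantive content your proposal is missing: it is precisely what bridges the value of the line bundle at a generic section $p_v$ with its value at the special point $\tilde p$.
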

\begin{proof} 
We employ a technique for manipulating line bundles on logarithmic curves which has been used before in other contexts --- see e.g. \cite[Construction~5.4]{BarrottNabijou} and \cite[Theorem~4.2]{BNRGenus1} --- and that we refer to as the \textbf{there and back again} technique. The idea is to pull back the piecewise linear function to the universal curve, manipulate it there, and then pull or push it back down to the base of the family. 

We use the logarithmic structure on the universal twisted punctured curve (see \Cref{construction: universal family PunctOrb}). Consider the tropicalisation of the universal family:
\[
\begin{tikzcd}
\sqC \ar[d,"\ptrop"] \ar[r,"\tilde{\ftrop}"] & \Rplus \\
\Tsf_r.	
\end{tikzcd}
\]
We work locally on a nuclear open substack of $\PunctOrb_\Lambda(\Acal_r|\Dcal_r)$. This corresponds to restricting to a cone in $\Tsf_r$ indexed by a tropical type of twisted punctured map. Let $v \in V(\Gamma)$ be the vertex supporting the puncturing $p$. Consider the piecewise linear function
\[ \ptrop^\star \tilde{\ftrop}(p) - \tilde{\ftrop}^\star(1) \]
on $\sqC$. This vanishes identically on $v$ and has slope $-\widetilde{\upalpha}_i$ along every adjacent marking, and $-\widetilde{m}_{\vec{e}}$ along every adjacent edge. Passing to the associated line bundles, we obtain (see e.g. \cite[Proposition~2.4.1]{RanganathanSantosParkerWise1})
\[ (\uppi^\star \OO_S(\, \tilde\ftrop(p)) \otimes \Lcal^{-1})|_{\Ccal_v} = \OO_{\Ccal_v} \left(\sum_{v\in i} (-\widetilde{\upalpha}_i) \tilde{p}_i + \sum_{v \in e} (-\widetilde{m}_{\vec{e}}) \tilde{q}_e \right)
\]
where the sum is over markings and edges adjacent to $v$. Rearranging and restricting to the puncturing divisor $\tilde{p}$ we obtain:
\begin{equation} \label{eqn: gerby puncturing offset relation in universal curve} \uppi^\star \OO_S(\, \tilde\ftrop(p))|_{\tilde{p}} = \Lcal(\tilde{c}_p \tilde{p})|_{\tilde{p}} = \Lcal_+|_{\tilde{p}}.\end{equation}
Finally the composite $\tilde{p} \hookrightarrow \Ccal \to S$ is a gerbe banded by $\upmu_{s_p}$. Given a line bundle $L$ on $S$ we have $\operatorname{age}_{\tilde{p}} \uppi^\star L=0$ and so $\uppi_\star \uppi^\star (L|_{\tilde{p}})=L$. In our setting we have $L=\OO_S(\tilde\ftrop(p))$. The vanishing of the age can also be verified directly, as follows:
\begin{align*} \operatorname{age}_{\tilde{p}} \Lcal_+ & = \operatorname{age}_{\tilde{p}} \Lcal + \operatorname{age}_{\tilde{p}}(\OO_\Ccal(D)) && \text{(mod $1$)} \\
&= \operatorname{age}_{\tilde{p}} \Lcal + \operatorname{age}_{\tilde{p}} \OO_\Ccal(\tilde{c}_p \tilde{p}) && \text{(mod $1$)} \\
& = (1 - \tilde{c}_p/s_p) + \tilde{c}_p/s_p && \text{(mod $1$)} \\
& = 0 && \text{(mod $1$).}
\end{align*}
Applying $\uppi_\star$ to \eqref{eqn: gerby puncturing offset relation in universal curve} we obtain the desired identity:
\[\OO_S(\, \tilde\ftrop(p)) = \uppi_\star (\uppi^\star \OO_S(\, \tilde\ftrop(p)))|_{\tilde{p}} = \uppi_\star (\Lcal_+|_{\tilde{p}}).\qedhere\]
\end{proof}

From the previous proposition and \eqref{eqn: Etheta} we obtain:
\begin{equation} \label{eqn: Etheta in terms of PL functions}  \EE_\uptheta = \bigoplus_{p \in P} \uppi_\star(\Lcal_+|_{\tilde{p}})^\vee[1].\end{equation}
We now relate this to \eqref{eqn: exact triangle pushforward}. We require the following:

\begin{lemma} We have $\uppi_\star (\Lcal_+|_{\tilde{c}_p \tilde{p}}) = \uppi_\star (\Lcal_+|_{\tilde{p}})$.\end{lemma}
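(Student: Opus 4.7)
The plan is to filter $\Lcal_+|_{\tilde{c}_p \tilde{p}}$ by powers of the ideal of $\tilde{p}$ and then push forward term-by-term, using the age calculation from the preceding proof of \Cref{prop: line bundle associated to gerby puncturing offset} to kill all but the top graded piece. Concretely, the ideal filtration
\[ 0 = F_0 \subset F_1 \subset \cdots \subset F_{\tilde{c}_p} = \Lcal_+|_{\tilde{c}_p \tilde{p}} \]
with $F_j := \Lcal_+ \otimes I_{\tilde{p}}^{\tilde{c}_p - j}/\Lcal_+ \otimes I_{\tilde{p}}^{\tilde{c}_p}$ has associated graded pieces
\[ F_j/F_{j-1} \cong \Lcal_+\bigl(-(\tilde{c}_p - j)\tilde{p}\bigr)\big|_{\tilde{p}}, \qquad j = 1, \ldots, \tilde{c}_p. \]

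Next I would compute the age of each graded piece at $\tilde{p}$. Since $\operatorname{age}_{\tilde{p}}\Lcal_+ = 0$ (established in the age computation in the proof of \Cref{prop: line bundle associated to gerby puncturing offset}) and $\operatorname{age}_{\tilde{p}} \OO_\Ccal(\tilde{p}) = 1/s_p$, we have
\[ \operatorname{age}_{\tilde{p}}\bigl(\Lcal_+(-k\tilde{p})\bigr) \equiv -\tfrac{k}{s_p} \pmod{1} \]
for every $k \in \Z$. Here the size condition from \Cref{def: rooting data} is decisive: $r > |\upalpha_p|$ gives
\[ \tilde{c}_p = |\upalpha_p| \, s_p/r < s_p, \]
so for $k = 1,\ldots, \tilde{c}_p - 1$ the age is strictly between $0$ and $1$, whereas the top piece ($j = \tilde{c}_p$, i.e.\ $k = 0$) has age zero.

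The morphism $\uppi|_{\tilde{p}}\colon \tilde{p} \to S$ is a $\mu_{s_p}$-gerbe, and its pushforward is exact and extracts the $\mu_{s_p}$-invariant part. Hence $\uppi_\star$ annihilates any coherent sheaf supported on $\tilde{p}$ of nonzero age. Applied to the filtration above, all graded pieces except the top vanish upon pushforward, and induction on $j$ yields $\uppi_\star F_j = 0$ for $j < \tilde{c}_p$ and
\[ \uppi_\star\bigl(\Lcal_+|_{\tilde{c}_p \tilde{p}}\bigr) = \uppi_\star (F_{\tilde{c}_p}/F_{\tilde{c}_p - 1}) = \uppi_\star\bigl(\Lcal_+|_{\tilde{p}}\bigr), \]
as claimed. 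The only step that requires genuine care is the exactness of $\uppi_\star$ along the gerbe $\tilde{p} \to S$ (so that the filtration actually transports to a filtration on the pushforward); this is standard for a tame gerbe banded by $\mu_{s_p}$ and poses no real obstacle. The essential input beyond formalities is the size inequality $\tilde{c}_p < s_p$, which is precisely why the rooting data were chosen large.
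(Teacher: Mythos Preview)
Your proof is correct and is essentially the same as the paper's: both use the age computation $\operatorname{age}_{\tilde{p}}\Lcal_+ = 0$ together with the size inequality $\tilde{c}_p < s_p$ to isolate the invariant part under $\upmu_{s_p}$. The paper works directly in the local model $[\Aaff^{\!1}_R/\upmu_{s_p}]$ and reads off the eigenspace decomposition of $\Lcal_+|_{\tilde{c}_p\tilde{p}}$ with basis $t^0,\ldots,t^{\tilde{c}_p-1}$, whereas you package the same decomposition as the associated graded of the $I_{\tilde{p}}$-adic filtration; these are the same computation, and your filtration pieces $\Lcal_+(-k\tilde{p})|_{\tilde{p}}$ are precisely the eigenspaces $t^k\cdot\Lcal_+|_{\tilde{p}}$ in the paper's local model.
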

\begin{proof}
We argue locally. Restrict to an affine open $\Spec R \subseteq S$. Local to the gerby divisor $\tilde{p}$, the universal curve $\Ccal \to \Spec R$ is given by
\[ \Ccal = [\Aaff^{\!1}_R/\upmu_{s_p}] \]
where the action $\upmu_{s_p} \acts \Aaff^{\! 1}_R = \Aaff^{\! 1}_\Z \times \Spec R$ is the standard action on the first factor and the trivial action on the second factor. Letting $t$ denote the coordinate on the affine line, we have
\begin{align*}
\tilde{p} & = [\operatorname{Spec}R/\upmu_{s_p}], \\
\tilde{c}_p \tilde{p} & = [\left(\operatorname{Spec}R[t]/(t^{s_p}=0)\right)/\upmu_{s_p}].
\end{align*}
We view $\Lcal_+|_{\tilde{c}_p \tilde{p}}$ and $\Lcal_+|_{\tilde{p}}$ as $\upmu_{s_p}$-equivariant modules over the appropriate coordinate rings. The pushforward is obtained by calculating the invariant sections. We saw in the proof of \Cref{prop: line bundle associated to gerby puncturing offset} that the action on $\Lcal_+|_{\tilde{p}}$ is trivial. On $\tilde{c}_p \tilde{p}$ the coordinate ring splits into one-dimensional eigenspaces generated by the monomials
\[ t^0, t^1, \ldots, t^{\tilde{c}_p-1}.\]
Since we assume $r > |\upalpha_p|=-\upalpha_p$ (\Cref{def: rooting data}), it follows that $s_p > -\upalpha_p s_p/r = -\tilde{\upalpha_p} = \tilde{c}_p$. Hence the action is nontrivial on all monomials except $t^0=1$. It follows that the invariant sections over $\tilde{c}_p \tilde{p}$ and $\tilde{p}$ are both generated by $1$.
\end{proof}
From the previous lemma and \eqref{eqn: exact triangle pushforward} we obtain the exact triangle:
\begin{equation*} \Rder\uppi_\star \Lcal \to \Rder \uppi_\star \Lcal_+ \to \bigoplus_{p \in P} \uppi_\star (\Lcal_+|_{\tilde{p}}) \xrightarrow{[1]}. \end{equation*}
Dualising, shifting, and combining with \eqref{eqn: POT for eta}, \eqref{eqn: POT for rho}, \eqref{eqn: Etheta in terms of PL functions}, we obtain the compatible triple
\[ \uptheta^\star \EE_\uprho \to \EE_\upeta \to \EE_\uptheta \xrightarrow{[1]} \]
as required. This completes the proof of \Cref{thm: classes same}.

\subsection{General targets} \label{sec: correspondence smooth pairs general targets} Theorems~\ref{thm: punctured orbifold vs punctured}, \ref{thm: spaces same}, \ref{thm: classes same} establish \Cref{thm: correspondence smooth pairs} for universal targets. A straightforward comparison of obstruction theories now establishes the result for general targets:

\begin{proof}[Proof of \Cref{thm: correspondence smooth pairs}] By definition there is a cartesian square
\[
\begin{tikzcd}
\PunctOrb_\Lambda(X_r|D_r) \ar[r] \ar[d,"\uppsi_r"] \ar[rd,phantom,"\square"] & \Punct_\Lambda(X|D) \ar[d,"\uppsi"] \\
\PunctOrb_\Lambda(\Acal_r|\Dcal_r) \ar[r,"\upalpha"] & \Punct_\Lambda(\Acal|\Dcal).
\end{tikzcd}
\]
The vertical morphisms carry compatible perfect obstruction theories. By the commutativity of virtual pullbacks and proper pushforwards \cite[Theorem~4.1]{ManolachePull}, and the comparison for universal targets (\Cref{thm: punctured orbifold vs punctured}), we obtain
\[ \upalpha_\star[\PunctOrb_\Lambda(X_r|D_r)]^{\refined} = r^{-|P|}  [\Punct_\Lambda(X|D)]^{\refined}  \]
as required. For the other identity, first consider the diagram
\[
\begin{tikzcd}
\PunctOrb_\Lambda(X_r|D_r) \ar[r] \ar[d,"\uppsi_r"] \ar[rd,phantom,"\text{A}"] & \Punct_\Lambda(X|D) \ar[d,"\uppsi"] \ar[r] \ar[rd,phantom,"\text{B}"] & \Msf_{0,n,\upbeta}(X) \ar[d] \\
\PunctOrb_\Lambda(\Acal_r|\Dcal_r) \ar[r,"g"] & \Punct_\Lambda(\Acal|\Dcal) \ar[r] & \Mfrak(\Acal)
\end{tikzcd}
\]
where the moduli spaces in the third column parametrise maps with no logarithmic or orbifold structure. The square A is cartesian by definition, and B is cartesian by inspection. We conclude that A+B is cartesian. On the other hand, the following square is also cartesian by inspection:
\[
\begin{tikzcd}
\Orb_\Lambda(X_r) \ar[r] \ar[d,"\uptheta_r"] \ar[rd,phantom,"\square"] & \Msf_{0,n,\upbeta}(X) \ar[d] \\
\Orb_\Lambda(\Acal_r) \ar[r] & \Mfrak(\Acal).	
\end{tikzcd}
\]
By \cite[Proposition~3.1.1]{AbramovichMarcusWise} and \cite[Section~5.2]{AbramovichCadmanWise} the vertical morphisms carry compatible perfect obstruction theories given by $(\Rder \uppi_\star f^\star T_{X|D})^\vee$ and satisfying 
\[ [\Orb_\Lambda(X_r)]^{\virt} = \uptheta_r^![\Orb_\Lambda(\Acal_r)]^{\virt}.\]
Combining these cartesian diagrams with \Cref{thm: spaces same} we obtain
\[
\begin{tikzcd}
\PunctOrb_\Lambda(X_r|D_r) \ar[r,"\cong"] \ar[d,"\uppsi_r"] \ar[rd,phantom,"\square"] & \Orb_\Lambda(X_r) \ar[d,"\uptheta_r"] \\
\PunctOrb_\Lambda(\Acal_r|\Dcal_r) \ar[r,"\cong"] & \Orb_\Lambda(\Acal_r).	
\end{tikzcd}
\]
The vertical morphisms carry obstruction theories, and by \eqref{eqn: obstruction theory chimera} we see that these are compatible and hence induce the same virtual pullback. The desired identity
\[ [\PunctOrb_\Lambda(X_r|D_r)]^{\refined} = [\Orb_\Lambda(X_r)]^{\virt} \]
follows immediately from the result for universal targets (\Cref{thm: classes same}).
\end{proof}

\section{Proof: normal crossings pairs} \label{sec: proof snc}

\noindent We now establish the comparison for simple normal crossings pairs, completing the main theorem.

\begin{theorem}[\Cref{thm: main snc pairs introduction}] \label{thm: correspondence snc} Consider a simple normal crossings pair $(X|D)$ and numerical data $\Lambda$ for a moduli space of stable punctured maps in genus zero. There is a diagram of moduli spaces
\[
\begin{tikzcd}[column sep=small]
\Punct_\Lambda(X|D) \ar[rd,"\uptheta" below] && \NPunctOrb_\Lambda(X_{\bm{r}}|D_{\bm{r}}) \ar[ld,"\upalpha"] \ar[rd,"\upomega" below] && \Orb_\Lambda(X_{\bm{r}}) \ar[ld,"\upeta"] \\
& \NPunct_\Lambda(X|D) && \NOrb_\Lambda(X_{\bm{r}})	
\end{tikzcd}
\]
with all arrows proper, such that $\upalpha$ is virtually of degree $\Pi_{j=1}^k r_j^{-n_j}$ and $\upomega$ and $\upeta$ are virtually birational. If moreover $(X|D)$ is $\Lambda$-sensitive, then $\uptheta$ is virtually birational.
\end{theorem}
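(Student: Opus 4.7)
The plan is a rank reduction: we leverage the smooth-pair comparison \Cref{thm: correspondence smooth pairs} divisor-by-divisor, and use $\Lambda$-sensitivity to bridge the fibre-product (``N'') moduli with the true logarithmic moduli. First I would unpack the ``N'' spaces as iterated fibre products over the space of stable maps with refined insertions introduced at the end of \Cref{sec: main theorem}. Writing $\Lambda_j$ for the restriction of $\Lambda$ to the smooth pair $(X|D_j)$, we have
\[
\NPunct_\Lambda(X|D) = \Punct_{\Lambda_1}(X|D_1) \times \cdots \times \Punct_{\Lambda_k}(X|D_k),
\]
fibred over the stable map space, and similarly for $\NOrb_\Lambda(X_{\bm r})$ and $\NPunctOrb_\Lambda(X_{\bm r}|D_{\bm r})$. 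Under this description the morphisms $\upalpha$, $\upomega$, $\upeta$ decompose into $k$ parallel factors, one for each divisor component.

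Next I would deduce the assertions about $\upalpha$, $\upomega$, $\upeta$ directly from the smooth-pair theorem applied to each $(X|D_j)$. Since $\upomega_j$ is an isomorphism by \Cref{thm: correspondence smooth pairs}, so is $\upomega$, and in particular it is virtually birational. Each $\upalpha_j$ pushes forward its refined class with weight $r_j^{-|P_j|}=r_j^{-n_j}$, so $\upalpha$ pushes forward with weight $\prod_j r_j^{-n_j}$. The morphism $\upeta$ is virtually birational by the birational invariance of orbifold Gromov--Witten theory \cite{AbramovichWiseBirational}: the factors $\Orb_{\Lambda_j}(X_{r_j})$ and the joint $\Orb_\Lambda(X_{\bm r})$ share a common dense locus of maps with smooth source, on which the obstruction theories $(\Rder\uppi_\star f^\star T_{X_{\bm r}|D_{\bm r}})^\vee$ match up, and the virtual pushforward machinery of \cite{ManolachePull} promotes this to the virtual identity. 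Properness of all arrows follows because $\Msf_{0,n,\upbeta}(X)\times_{X^n} \prod_i D_{J(i)}$ is proper and each factor of the fibre product is proper over it.

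The heart of the theorem is the final assertion: under $\Lambda$-sensitivity, $\uptheta \colon \Punct_\Lambda(X|D) \to \NPunct_\Lambda(X|D)$ is virtually birational. The definition of the refined virtual class is via the Gysin pullback along $\Bcal \Gm^{k_P}\hookrightarrow \Acal^{k_P}$, which is manifestly a product over punctures; the issue is that the puncturing substack $\Vcal(\Tsf)$ is a priori \emph{not} a product over divisor components, because distinct directions of tangency at a single puncture are coupled through the cone stack $\Tsf$. The plan is to show that under $\Lambda$-sensitivity every minimal tropical type of punctured map to $(X|D)$ is determined by its $k$ projections to the smooth pairs $(X|D_j)$, and conversely any coherent collection of minimal smooth-pair types assembles to a minimal type for $(X|D)$. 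This is the negative-tangency analogue of the combinatorial result used in \cite[Section~4]{BNR2}: slope-sensitivity guarantees that every edge slope appearing in a minimal tropical type lies along a ray of the subdivision, so that the natural map $\Vcal(\Tsf)\to \prod_j \Vcal(\Tsf_j)$ (taken over the appropriate base) is birational, and the refined Gysin classes on each side match up factor by factor.

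The main obstacle is this last step. The refined virtual class is intrinsic to the normal-crossings pair, and a direct factorisation is hindered by coupling between divisors at each puncture and by the failure of $\Vcal(\Tsf)\hookrightarrow \Acal(\Tsf)$ to be a regular embedding in general. The key technical work will consist of: (i) verifying the tropical factorisation just described, using the slope-sensitivity inputs already developed in \cite[Section~4]{BNR2} together with the new enumeration of minimal punctured tropical types; (ii) comparing the Gysin pullback on the joint Artin fan $\Acal(\Tsf)$ with the exterior product of Gysin pullbacks on $\prod_j \Acal(\Tsf_j)$, which should follow from the fact that $\Bcal\Gm^{k_P}\hookrightarrow\Acal^{k_P}$ splits as a product over the $k_P$ factors; and (iii) reconciling the obstruction theory of $\upeta \colon \Punct_\Lambda(X|D)\to \Punct_\Lambda(\Acal(\Sigma)|\partial\Acal(\Sigma))$ with the fibre product of the smooth-pair obstruction theories --- this is formal because the logarithmic tangent bundles decompose as $T_{X|D} = T_X(-\log D) = \bigcap_j T_{X|D_j}$, and compatibility of the refined intersections in (ii) together with the explicit strata calculations of \Cref{sec: Aluffi} seals the virtual birationality of $\uptheta$.
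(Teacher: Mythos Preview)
Your overall rank-reduction architecture matches the paper's, but there are two genuine gaps.

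\textbf{The argument for $\upeta$.} You appeal to a ``common dense locus of maps with smooth source'' to get virtual birationality. This fails in the presence of negative tangency: as stressed in the paper's ``word of caution'' at the start of \Cref{sec: proof smooth}, the orbifold moduli spaces $\Orb_\Lambda(\Acal_r)$ are reducible and non-equidimensional once high-age markings appear, so there is no dense good locus to exploit. The paper instead invokes the orbifold product formula \cite{AJTProducts}, which compares $\Orb_\Lambda(\Acal_{\bm r}^k)$ with the fibre product $\NOrb_\Lambda(\Acal_{\bm r}^k)$ at the virtual level without any irreducibility hypothesis. (The reference \cite{AbramovichWiseBirational} concerns logarithmic, not orbifold, theory.)

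\textbf{The argument for $\uptheta$.} Your plan is to match minimal punctured tropical types for $(X|D)$ with coherent collections of minimal types for the $(X|D_j)$, and deduce that $\Vcal(\Tsf)\to\prod_j\Vcal(\Tsf_j)$ is ``birational''. But both sides are reducible and non-equidimensional, so this statement has no clear meaning, and matching irreducible components does not control the refined Gysin class. The paper avoids this entirely by the \emph{positivisation} trick: replace $\Lambda$ by $\Lambda_+$ (\Cref{sec: positivised numerical data}), observe that the tropical moduli and hence the Artin fans are unchanged (\Cref{lem: positivised tropical moduli equals original tropical moduli}), and note that for $\Lambda_+$ the puncturing substacks coincide with the full Artin fans. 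Since $\Lambda$-sensitivity implies $\Lambda_+$-sensitivity, the positive-tangency rank reduction \cite[Theorem~5.1]{BNR2} gives surjectivity of $\uptheta_+$, which forces the morphism of ambient Artin fans $\Acal(\Tsf_\Lambda)\to\NAcal(\Tsf_\Lambda)$ to be proper and birational (\Cref{prop: Artin and naive Artin birational}). With this in hand, your step~(ii) is essentially the paper's computation \eqref{eqn: pushforward puncturing substack as diagonal pullback}: one pushes the refined Gysin class $\upiota^![\Acal(\Tsf_\Lambda)]$ forward along the now-birational Artin fan map and commutes with the diagonal pullback. The key missing idea is that the comparison must happen at the level of the irreducible ambient Artin fans, accessed via positivisation, rather than at the level of the singular puncturing substacks.
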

We employ rank reduction to deduce this result from the case of smooth pairs established in the previous section. The majority of the proof is an elaborate diagram chase. The novel geometric content is \Cref{thm: naive punctured versus punctured}, which hinges on the tropical insights of \cite{BNR2}.

In Sections~\ref{sec: snc proof universal space} and \ref{sec: snc proof punctured versus naive punctured} we establish the result for universal targets, and in Section~\ref{sec: snc proof general targets} we extrapolate to general targets. We freely employ the notation established in Sections~\ref{sec: new theory of negative tangency} and \ref{sec: correspondence}.

\subsection{Universal naive spaces} \label{sec: snc proof universal space} This section closely mirrors \cite[Section~2]{BNR2}. Fix a simple normal crossings pair $(X|D=D_1+\ldots+D_k)$ and numerical data $\Lambda$ for a moduli space of stable punctured maps. This induces numerical data $\Lambda_j$ for each of the smooth pairs $(X|D_j)$. As in \Cref{sec: new theory of negative tangency} we employ $(\Acal^k | \partial \Acal^k)$ as a convenient proxy for the universal target. We define universal naive spaces by fibring over the moduli space of curves:
\[
\begin{tikzcd}
\NPunct_\Lambda(\Acal^k|\partial \Acal^k) \ar[r] \ar[d] \ar[rd,phantom,"\square" left] & \prod_{j=1}^k \Punct_{\Lambda_j} (\Acal|\Dcal) \ar[d] & \NOrb_\Lambda(\Acal^k_{\bm{r}}) \ar[r] \ar[d] \ar[rd,phantom,"\square" left] & \prod_{j=1}^k \Orb_{\Lambda_j}(\Acal_{r_j}) \ar[d] \\
\Mfrak \ar[r] & \Mfrak^k & \Mfrak \ar[r] & \Mfrak^k \\
& \NPunctOrb_\Lambda(\Acal_{\bm{r}}^k|\partial \Acal_{\bm{r}}^k) \ar[r] \ar[d] \ar[rd,phantom,"\square" left] & \prod_{j=1}^k \PunctOrb_{\Lambda_j}(\Acal_{r_j}|\Dcal_{r_j}) \ar[d] \\
& \Mfrak \ar[r] & \Mfrak^k
\end{tikzcd}
\]
Each universal space is equipped with a virtual class, obtained by applying diagonal pullback to the external product of virtual classes on the spaces of maps to the smooth pair. A diagram chase produces cartesian towers
\[
\begin{tikzcd}[column sep=tiny]
\NPunctOrb_\Lambda(\Acal_{\bm{r}}^k|\partial \Acal_{\bm{r}}^k) \ar[r] \ar[d,"\upalpha"] \ar[rd,phantom,"\square"] & \prod_{j=1}^k \PunctOrb_{\Lambda_j}(\Acal_{r_j}|\Dcal_{r_j}) \ar[d] & \NPunctOrb_\Lambda(\Acal_{\bm{r}}^k|\partial \Acal_{\bm{r}}^k) \ar[r] \ar[d,"\upomega"] \ar[rd,phantom,"\square"] & \prod_{j=1}^k \PunctOrb_{\Lambda_j}(\Acal_{r_j}|\Dcal_{r_j}) \ar[d] \\
\NPunct_\Lambda(\Acal^k|\partial \Acal^k) \ar[r] \ar[d] \ar[rd,phantom,"\square" left] & \prod_{j=1}^k \Punct_{\Lambda_j}(\Acal|\Dcal) \ar[d] & \NOrb_\Lambda(\Acal_{\bm{r}}^k) \ar[r] \ar[d] \ar[rd,phantom,"\square" left] & \prod_{j=1}^k \Orb_{\Lambda_j}(\Acal_{r_j}) \ar[d] \\
\Mfrak \ar[r] & \Mfrak^k & \Mfrak \ar[r] & \Mfrak^k	
\end{tikzcd}
\]
From Theorems~\ref{thm: punctured orbifold vs punctured} and \ref{thm: classes same}, and the commutativity of Gysin pullback with proper pushforward, we see that $\upalpha$ has virtual degree $\Pi_{j=1}^k r_j^{-n_j}$ and that $\upomega$ is virtually birational. On the other hand, the product formula for orbifold targets shows that the morphism
\[ \upeta \colon \Orb_\Lambda(\Acal_{\bm{r}}^k) \to \NOrb_\Lambda(\Acal_{\bm{r}}^k) \]
is virtually birational \cite{AJTProducts} (the product formula is typically stated for Deligne--Mumford targets and fibring over stable curves, but the arguments apply unchanged to our setting).

\subsection{Naive punctured versus punctured} \label{sec: snc proof punctured versus naive punctured} It remains to compare the punctured theory with the naive punctured theory. This is where $\Lambda$-sensitivity enters. There is a natural proper morphism
\[ \uptheta \colon \Punct_\Lambda(\Acal^k|\partial \Acal^k) \to \NPunct_\Lambda(\Acal^k|\partial \Acal^k) \]
and Theorem~\ref{thm: correspondence snc} for universal targets follows from:
\begin{theorem} \label{thm: naive punctured versus punctured} Suppose that $(\Acal^k|\partial \Acal^k)$ is $\Lambda$-sensitive. Then $\uptheta$ is virtually birational.
\end{theorem}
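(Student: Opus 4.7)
The plan is to reduce \Cref{thm: naive punctured versus punctured} to an equality of refined intersection classes at the level of Artin fans, and then invoke the tropical analysis guaranteed by $\Lambda$-sensitivity. First I would give a tropical description of the naive space paralleling that of \Cref{sec: new theory of negative tangency}. Setting $\Tsf_j \colonequals \Trop_{\Lambda_j}(\Rplus)$, define
\[ \Tsf^{\mathrm{naive}} \colonequals \prod_{j=1}^k \Tsf_j \times_{\Acal(\MTrop)^k} \Acal(\MTrop). \]
The individual puncturing offset morphisms assemble into a map $\Acal(\Tsf^{\mathrm{naive}}) \to \Acal^{k_P}$, and I define a naive puncturing substack $\Vcal(\Tsf^{\mathrm{naive}}) \hookrightarrow \Acal(\Tsf^{\mathrm{naive}})$ as the fibre product with $\upiota \colon \Bcal\Gm^{k_P} \hookrightarrow \Acal^{k_P}$. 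A diagram chase using that each $\Punct_{\Lambda_j}(\Acal|\Dcal)$ is cartesian over $\Vcal(\Tsf_j)$ via a smooth morphism shows that $\NPunct_\Lambda(\Acal^k|\partial \Acal^k)$ is cartesian over $\Vcal(\Tsf^{\mathrm{naive}})$ via a smooth morphism, and that its refined virtual class is the smooth pullback of $[\Vcal(\Tsf^{\mathrm{naive}})]^{\refined} \colonequals \upiota^! [\Acal(\Tsf^{\mathrm{naive}})]$.

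The coordinate projections $\Rplus^k \to \Rplus$ next induce a morphism of cone stacks $\Tsf \to \Tsf^{\mathrm{naive}}$ that realises $\uptheta$ tropically, and by construction the puncturing offset morphisms commute with this projection. By the compatibility of refined Gysin pullback with proper pushforward and the projection formula, the theorem reduces to the pushforward identity
\[ \uptheta_\star[\Acal(\Tsf)] = [\Acal(\Tsf^{\mathrm{naive}})] \]
modulo classes whose pullback along $\upiota$ vanishes. This is a purely tropical statement about the morphism of cone stacks $\Tsf \to \Tsf^{\mathrm{naive}}$ over the global framing $\Rplus^k$.

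The main obstacle is precisely this tropical statement, and it is where $\Lambda$-sensitivity is indispensable. The morphism $\Tsf \to \Tsf^{\mathrm{naive}}$ is not birational in general: cones of $\Tsf^{\mathrm{naive}}$ indexed by tuples of tropical types whose image cones do not jointly assemble into a single image cone in $\Rplus^k$ traversable by a coherent piecewise linear map on a shared source graph have no preimage. Adapting the slope-sensitivity analysis of \cite[Section 4]{BNR2} to the punctured setting via the tropical enumeration of \Cref{sec: tropical maps}, I would show that every slope arising in a tropical punctured map realising the numerical data $\Lambda$ lies along a ray of $\Rplus^k$ that is present in the fan. Consequently, each minimal tropical type of $\Vcal(\Tsf^{\mathrm{naive}})$ lifts uniquely and coherently to a minimal tropical type of $\Vcal(\Tsf)$. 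The remaining cones of $\Tsf^{\mathrm{naive}}$ lie outside the image of the combined puncturing offset morphism, so their contribution to $\upiota^!$ vanishes. An inspection of the strata formulae of \Cref{sec: Aluffi} confirms that the refined Segre class pushes forward correctly on all strata, including lower-dimensional ones, completing the proof.
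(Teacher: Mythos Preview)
Your overall architecture is right: reduce $\uptheta$ to a comparison of refined classes on Artin fans, set up a naive puncturing substack, and verify a pushforward identity. The diagram chase you outline is essentially the one in the paper. However, you miss the key simplifying step, and as a result your tropical argument is both harder and vaguer than necessary.

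The paper does \emph{not} directly adapt the slope-sensitivity analysis of \cite{BNR2} to the punctured setting. Instead, it observes that positivising the numerical data (\Cref{sec: positivised numerical data}) leaves the tropical moduli space unchanged: $\Tsf_\Lambda \cong \Tsf_{\Lambda_+}$ by \Cref{lem: positivised tropical moduli equals original tropical moduli}. Since $\Lambda$-sensitivity depends only on the edge slopes, $(\Acal^k|\partial\Acal^k)$ is $\Lambda_+$-sensitive as well, and now the positive-tangency result \cite[Theorem~5.1]{BNR2} applies directly to show that $\uptheta_+$ is birational between irreducible stacks. In particular $\uptheta_+$ is surjective, and chasing through a cartesian square one concludes that $\Acal(\Tsf_\Lambda) \to \NAcal(\Tsf_\Lambda)$ is itself surjective, hence proper and birational (it is the integralisation/saturation). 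This gives the clean identity $\upalpha_\star[\Acal(\Tsf_\Lambda)] = [\NAcal(\Tsf_\Lambda)]$ on the nose, with no ``modulo classes whose pullback along $\upiota$ vanishes'' caveat.

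Your proposed direct route has two soft spots. First, the claim that ``each minimal tropical type of $\Vcal(\Tsf^{\mathrm{naive}})$ lifts uniquely and coherently'' is precisely the content of the rank-reduction argument in \cite{BNR2}, and re-proving it in the punctured setting is real work you have not done. Second, the assertion that the remaining cones ``lie outside the image of the combined puncturing offset morphism, so their contribution to $\upiota^!$ vanishes'' is unclear: Gysin pullback is not supported only on the image, and you would need to control excess contributions on all of $\NAcal(\Tsf_\Lambda)$, not just on the puncturing substack. The positivisation trick sidesteps both issues by establishing birationality of the full Artin fans, after which $\upiota^!$ commutes with pushforward without further analysis.
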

The special case of positive tangency constitutes the key rank reduction step in \cite{BNR2}. By positivising the numerical data, we will deduce the general case from this one:
\begin{theorem}[\!{\cite[Theorem~5.1]{BNR2}}] \label{thm: rank reduction positive tangency} Suppose that $(\Acal^k|\partial \Acal^k)$ is $\Lambda$-sensitive and that $\Lambda$ includes no punctures. Then $\uptheta$ is a birational morphism between irreducible stacks.
\end{theorem}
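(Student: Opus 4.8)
The plan is to produce a dense open substack over which $\uptheta$ restricts to an isomorphism, and then to show separately that both its source and its target are irreducible; birationality follows immediately. Throughout we use that, since $\Lambda$ contains no punctures, $\Punct_\Lambda(\Acal^k|\partial\Acal^k)$ is an ordinary moduli space of logarithmic stable maps. First I would isolate the \emph{non-degenerate locus} $U\subseteq\Punct_\Lambda(\Acal^k|\partial\Acal^k)$ parametrising maps with smooth source curve whose logarithmic structure is pulled back from the markings. Because $\Acal^k=\Acal\times\cdots\times\Acal$, a logarithmic map from a smooth curve $C$ to $\Acal^k$ is exactly a $k$-tuple of logarithmic maps $C\to\Acal$, and over the smooth-source locus the logarithmic structure carries no moduli: it is rigidly pinned down by the underlying map together with the contact orders $\upalpha_{ij}$. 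Hence $\uptheta|_U$ is the tautological isomorphism onto the open substack $U^{\mathrm{nv}}\subseteq\NPunct_\Lambda(\Acal^k|\partial\Acal^k)$ of tuples of non-degenerate maps sharing a common smooth source curve.

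Next I would establish irreducibility of the source. By \cite{PuncturedMaps} the stack $\Punct_\Lambda(\Acal^k|\partial\Acal^k)$ is logarithmically smooth over the base field --- with no idealised structure, since the contact orders are non-negative --- hence reduced and locally irreducible along the boundary. In the positive-tangency setting the numerical data $\Lambda$ corresponds to a single point of $\Sigma(\N)$ and thereby isolates one connected component (compare \Cref{rmk: global contact orders}). Therefore $\Punct_\Lambda(\Acal^k|\partial\Acal^k)$ is irreducible, and $U$, being the complement of the boundary divisor, is dense in it.

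The heart of the argument is irreducibility of the target together with density of $U^{\mathrm{nv}}$, and this is the only place the hypothesis is used. Each factor $\Punct_{\Lambda_j}(\Acal|\Dcal)$ of the naive product is logarithmically smooth, hence irreducible with dense smooth-source locus, but $\NPunct_\Lambda(\Acal^k|\partial\Acal^k)$ is their fibre product over the stack of curves $\Mfrak$ along the \emph{non-flat} forgetful morphisms. A priori this fibre product can acquire components of excess dimension supported over the boundary $\partial\Mfrak$, and such components would miss $U^{\mathrm{nv}}$, destroying both irreducibility and density. The content of $\Lambda$-sensitivity, via the tropical analysis of \cite[Section~4]{BNR2}, is that after a $\Lambda$-sensitive blowup every slope occurring in a tropical type of a stable map --- in particular every slope governing the boundary geometry of the factors $\Punct_{\Lambda_j}$ and of $\Punct_\Lambda(\Acal^k|\partial\Acal^k)$ --- lies along a ray of the relevant cone. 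Geometrically this ``decouples'' the $k$ divisors along each node, so that the fibre product is dimensionally transverse over all of $\Mfrak$; consequently $\NPunct_\Lambda(\Acal^k|\partial\Acal^k)$ has the expected dimension everywhere, hence is irreducible with $U^{\mathrm{nv}}$ dense.

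Combining the three steps, $\uptheta$ restricts to an isomorphism between the dense opens $U$ and $U^{\mathrm{nv}}$ of two irreducible stacks, so it is a birational morphism, as claimed. I expect the last step to be the genuine obstacle: controlling the fibre product over $\Mfrak$ uniformly along the boundary, and in particular verifying that $\Lambda$-sensitivity eliminates the excess components. This is precisely the combinatorial core of the rank-reduction argument --- that the diagonal, rank-two slopes which obstruct decoupling the divisors are exactly those straightened out by a $\Lambda$-sensitive subdivision --- and is where the tropical insights of \cite{BNR2} are indispensable; everything else is formal manipulation of logarithmically smooth stacks.
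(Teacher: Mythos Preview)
The paper does not prove this theorem: it is quoted verbatim from \cite[Theorem~5.1]{BNR2} and used as a black box (specifically, its surjectivity consequence is invoked in the proof of \Cref{prop: Artin and naive Artin birational}). So there is no ``paper's own proof'' to compare against here.

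Your outline has the right architecture for a birationality statement --- common dense open plus irreducibility of both sides --- and your treatment of the source (log smoothness in positive tangency, hence irreducibility, with the non-degenerate locus dense) is correct. The identification of $\uptheta$ with the identity over the smooth-source locus is also fine.

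Where your proposal is incomplete is exactly where you flag it: irreducibility of the naive space. Your heuristic that $\Lambda$-sensitivity makes the fibre product ``dimensionally transverse over all of $\Mfrak$'' is the right intuition but is not an argument, and the mechanism you sketch is slightly off. In \cite{BNR2} the logic runs in a different order: one does not first establish irreducibility of $\NPunct_\Lambda$ and then deduce birationality of $\uptheta$; rather, one works tropically to show that $\uptheta$ itself is well-behaved (comparing the cone complexes $\Tsf_\Lambda$ and the fibre product of the $\Tsf_{\Lambda_j}$ over $\MTrop$, and using $\Lambda$-sensitivity to control the rank-two slopes that obstruct this comparison), and irreducibility of the target is a \emph{consequence}. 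Your phrasing ``every slope\ldots lies along a ray of the relevant cone'' is essentially the definition of slope-sensitivity, but the step from that to ``dimensionally transverse everywhere'' is the entire content of the theorem and needs the cone-by-cone analysis of \cite[Section~5]{BNR2}. So your proposal correctly locates the difficulty but does not resolve it; what you have written is a plausibility argument rather than a proof.
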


We build to the proof of \Cref{thm: naive punctured versus punctured}. For the rest of this section we assume that $(\Acal^k |\partial \Acal^k)$ is $\Lambda$-sensitive. As in \Cref{sec: tropical maps}, the numerical data $\Lambda, \Lambda_1, \ldots, \Lambda_k$ define tropical moduli spaces, which we denote $\Tsf_\Lambda, \Tsf_{\Lambda_1}, \ldots, \Tsf_{\Lambda_k}$ respectively. We denote and define the \textbf{naive Artin fan}:
\[ \NAcal(\Tsf_\Lambda) \colonequals \left( \prod_{j=1}^k \Acal(\Tsf_{\Lambda_j}) \right) \times_{\Acal(\MTrop)^k} \Acal(\MTrop). \]
Directly examining the tropical moduli, we find an isomorphism of cone stacks
\[ \Tsf_\Lambda = \left( \prod_{j=1}^k \Tsf_{\Lambda_j} \right) \times_{\MTrop^k} \MTrop\]
from which we obtain a morphism of logarithmic algebraic stacks
\begin{equation} \label{eqn: map Artin fan to fibre product of Artin fans} \Acal(\Tsf_\Lambda) \to \NAcal(\Tsf_\Lambda). \end{equation}

\begin{proposition} \label{prop: Artin and naive Artin birational} The morphism \eqref{eqn: map Artin fan to fibre product of Artin fans} is proper and birational.
\end{proposition}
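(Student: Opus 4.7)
The plan is to work tropically, leveraging the componentwise decomposition of tropical data already observed in the paragraph preceding the statement. A tropical type of punctured map to $\Rplus^k$ is equivalent data to a common source graph $\Gamma$ together with, for each $j \in [k]$, a tropical type of punctured map to $\Rplus$ with underlying graph $\Gamma$: the multidegrees $d_v = (d_{v,j})_j$, image cones $\sigma_\bullet \subseteq \Rplus^k$ (determined by their componentwise projections), and slope vectors $m_{\vec{e}} = (m_{\vec{e},j})_j$ all decompose componentwise, while the vectorial balancing condition is equivalent to $k$ scalar balancing conditions. This is the content of the displayed identification $\Tsf_\Lambda \cong \prod_j \Tsf_{\Lambda_j} \times_{\MTrop^k} \MTrop$. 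Applying the Artin fan functor, the morphism \eqref{eqn: map Artin fan to fibre product of Artin fans} factors as
\[
\Acal(\Tsf_\Lambda) \;\xrightarrow{\sim}\; \Acal\!\left(\prod_j \Tsf_{\Lambda_j} \times_{\MTrop^k} \MTrop\right) \;\longrightarrow\; \prod_j \Acal(\Tsf_{\Lambda_j}) \times_{\Acal(\MTrop)^k} \Acal(\MTrop) \;=\; \NAcal(\Tsf_\Lambda),
\]
and the content of the proposition is that the second arrow is proper and birational.

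For birationality, I would restrict to the open substack of $\Acal(\MTrop)$ corresponding to smooth source curves. Over this open locus the source graph has a single vertex with no edges, so on either side the tropical data collapses to a multidegree and image cone for each factor with no compatibility to impose; both sides restrict to the product $\prod_j \Acal(\Tsf_{\Lambda_j})^\circ$, and the map is an isomorphism. Since this open locus is dense in both stacks (every tropical type specializes from a smooth point by scaling edge lengths to zero), this establishes birationality.

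For properness, I would work on the toric charts. Cover $\NAcal(\Tsf_\Lambda)$ by charts indexed by tuples $(\sigma_j)_{j=1}^k$ of cones in $\Tsf_{\Lambda_j}$ together with a compatible cone in $\MTrop$ (meaning all $\sigma_j$ share a common source graph). The componentwise decomposition shows that such a tuple assembles to a unique cone $\sigma \in \Tsf_\Lambda$, and conversely each cone of $\Tsf_\Lambda$ projects to a unique such tuple. Thus the morphism is finite (in fact bijective) on toric charts, and globally it is a finite type morphism between Artin fans that admits compatible proper covers; valuative criteria reduce to the analogous criteria on each $\Acal(\Tsf_{\Lambda_j})$ and on $\Acal(\MTrop)$, which are proper.

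The main obstacle is to pin down the precise difference between $\Acal(\cdot)$ of a fibre product and the fibre product of $\Acal(\cdot)$'s over $\Acal(\MTrop)$. These can differ because the Artin fan functor is built from minimal logarithmic structures, whereas the naive fibre product retains gluing data arising from independently chosen strata in each factor that happen to become compatible only after pulling back to $\Acal(\MTrop)$. The key observation is that any such spurious compatibility is, by the tropical bijection on types, forced to be the image of a genuine stratum of $\Acal(\Tsf_\Lambda)$; consequently the map is proper birational, with any contraction of strata being itself proper (in fact, a monomial log modification between idealised log smooth stacks). I emphasise that $\Lambda$-sensitivity is not invoked here — it enters only in \Cref{thm: naive punctured versus punctured} below.
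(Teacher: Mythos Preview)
Your argument has a genuine gap in the birationality step, and your closing claim that $\Lambda$-sensitivity is not invoked here is incorrect.

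The morphism \eqref{eqn: map Artin fan to fibre product of Artin fans} is, as the paper notes citing \cite{Molcho}, the integralisation and saturation of the codomain. Properness is therefore automatic (integralisation is a closed immersion, saturation is finite), so your somewhat vague valuative-criterion sketch is unnecessary. The entire content of the proposition lies in birationality, which amounts to showing that $\NAcal(\Tsf_\Lambda)$ is already integral, equivalently that the map is \emph{surjective}.

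Your birationality argument asserts that the smooth-source locus is dense in $\NAcal(\Tsf_\Lambda)$, justified by ``every tropical type specializes from a smooth point by scaling edge lengths to zero.'' But this reasoning applies to genuine cones of $\Tsf_\Lambda$, i.e.\ to points in the image of $\Acal(\Tsf_\Lambda)$; it does not address points of the naive fibre product that might lie outside that image. Your final paragraph acknowledges exactly this issue and then resolves it by fiat (``forced to be the image of a genuine stratum''), which is precisely the surjectivity statement that needs proof.

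The paper establishes surjectivity by passing to the positivised data $\Lambda_+$ (which leaves the Artin fans unchanged by \Cref{lem: positivised tropical moduli equals original tropical moduli}), building a cartesian tower over the map $\upmu \colon \Acal(\Tsf_\Lambda) \to \NAcal(\Tsf_\Lambda)$, and then invoking \Cref{thm: rank reduction positive tangency} to conclude that $\uptheta_+$ is surjective, hence $\upmu$ is surjective. That theorem from \cite{BNR2} requires $\Lambda$-sensitivity. The paper is explicit: ``this is the only place in this section where we use the slope-sensitivity assumption.'' So slope-sensitivity enters exactly here, in \Cref{prop: Artin and naive Artin birational}, not only later in \Cref{thm: naive punctured versus punctured}.
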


\begin{proof} By construction this morphism is the integralisation and saturation of the codomain \cite{Molcho}. We will prove that it is surjective. This implies that the codomain is integral, and it follows that the morphism is at worst a saturation. In particular, it is proper and birational.

To see that \eqref{eqn: map Artin fan to fibre product of Artin fans} is surjective, we pass to the positivised numerical data as in \Cref{sec: positivised numerical data}. This does not change the Artin fans by \Cref{lem: positivised tropical moduli equals original tropical moduli}, but it does change the moduli spaces of maps. Consider:
\[
\begin{tikzcd}
\NLog_{\Lambda_+}(\Acal^k | \partial \Acal^k) \ar[r] \ar[d] \ar[rd,phantom,"\text{A}"] & \prod_{j=1}^k \Log_{\Lambda_{j+}}(\Acal|\Dcal) \ar[r] \ar[d] \ar[rd,phantom,"\text{B}"] & \prod_{j=1}^k \Acal(\Tsf_{\Lambda_j}) \ar[d] \\
\Mfrak \ar[r] & \Mfrak^k \ar[r] & \Acal(\MTrop)^k.
\end{tikzcd}
\]
Here A is cartesian by definition of the naive space, while B is cartesian by \eqref{eqn: positive tangency auxiliary space}. We conclude that A+B is cartesian. Now consider:
\[
\begin{tikzcd}
\NLog_{\Lambda_+}(\Acal^k | \partial \Acal^k) \ar[r] \ar[d] \ar[rd,phantom,"\text{A'}"] & \NAcal(\Tsf_\Lambda) \ar[r] \ar[d] \ar[rd,phantom,"\text{B'}"] & \prod_{j=1}^k \Acal(\Tsf_{\Lambda_j}) \ar[d] \\
\Mfrak \ar[r] & \Acal(\MTrop) \ar[r] & \Acal(\MTrop)^k.
\end{tikzcd}
\]
Here A'+B'=A+B is cartesian by the above, while B' is cartesian by definition of the naive Artin fan. We conclude that A' is cartesian. Finally, consider:
\[
\begin{tikzcd}
\Log_{\Lambda_+}(\Acal^k | \partial \Acal^k) \ar[r,"\uptheta_+"] \ar[d] \ar[rd,phantom,"\text{A''}"] & \NLog_{\Lambda_+}(\Acal^k | \partial \Acal^k) \ar[r] \ar[d,"\upphi"] \ar[rd,phantom,"\text{B''}"] & \Mfrak \ar[d,"\uppsi"] \\
\Acal(\Tsf) \ar[r,"\upmu"] & \NAcal(\Tsf_\Lambda) \ar[r] & \Acal(\MTrop).
\end{tikzcd}
\]
Here B''=A' is cartesian by the above, while A''+B'' is cartesian by \eqref{eqn: positive tangency auxiliary space}. We conclude that A'' is cartesian. The morphism $\upphi$ is smooth and surjective, since the same is true for $\uppsi$.

On the other hand, the assumption that $(\Acal^k | \partial \Acal^k)$ is $\Lambda$-sensitive implies that it is also $\Lambda_+$-sensitive, as the collection of slopes appearing in the tropical moduli spaces are identical. \Cref{thm: rank reduction positive tangency} applies, and we conclude that $\uptheta_+$ is surjective (this is the only place in this section where we use the slope-sensitivity assumption). It follow that $\upphi \circ \uptheta_+$ is surjective, and hence $\upmu$ is surjective as required.
\end{proof}

Recall the total puncturing rank $k_P$ from \Cref{def: puncturing rank}, and consider the following diagram. The \textbf{naive puncturing substack} $\NVcal(\Tsf_\Lambda) \hookrightarrow \NAcal(\Tsf_\Lambda)$ is defined in order to make B cartesian.
\begin{equation} \label{eqn: diagram all puncturing substacks}
\begin{tikzcd}
\Vcal(\Tsf_\Lambda) \ar[r,hook] \ar[d,"\upnu"] \ar[rd,phantom,"\text{A}"] & \Acal(\Tsf_\Lambda) \ar[d,"\upalpha"] \\
\NVcal(\Tsf_\Lambda) \ar[r,hook] \ar[d] \ar[rd,phantom,"\text{B}"] & \NAcal(\Tsf_\Lambda) \ar[r] \ar[d] \ar[rd,phantom,"\text{D}"] & \Acal(\MTrop) \ar[d,"\updelta"] \\
\prod_{j=1}^k \Vcal(\Tsf_{\Lambda_j}) \ar[r,hook] \ar[d] \ar[rd,phantom,"\text{C}" left] & \prod_{j=1}^k \Acal(\Tsf_{\Lambda_j})  \ar[r] \ar[d]  & \Acal(\MTrop)^k \\
\Bcal \Gm^{k_P} \ar[r,hook,"\upiota"] & \Acal^{k_P}
\end{tikzcd}
\end{equation}
Here B and D are cartesian by the definitions of $\NVcal(\Tsf_\Lambda)$ and $\NAcal(\Tsf_\Lambda)$. We conclude that B+D is cartesian. Moreover D is a cartesian diagram of irreducible stacks, and $\updelta$ is a regular embedding of codimension zero which pulls back to a closed embedding, also of codimension zero. We conclude:
\begin{equation} \label{eqn: pullback product of Artin to naive Artin}
[\NAcal(\Tsf_\Lambda)] = \updelta^! \big(\Pi_{j=1}^k [\Acal(\Tsf_{\Lambda_j})]\big).
\end{equation}
We also have C cartesian by the definition of the $\Vcal(\Tsf_{\Lambda_j})$. We conclude that B+C is cartesian. On the other hand A+B+C is cartesian by the definition of $\Vcal(\Tsf_\Lambda)$. We conclude that A is cartesian, and:
\[
\upnu_\star[\Vcal(\Tsf_\Lambda)]^{\refined} = \upnu_\star \upiota^! [\Acal(\Tsf_\Lambda)] = \upiota^! \upalpha_\star [ \Acal(\Tsf_\Lambda)].
\]
Combining \Cref{prop: Artin and naive Artin birational} and \eqref{eqn: pullback product of Artin to naive Artin} gives $\upalpha_\star [ \Acal(\Tsf_\Lambda)] = [\NAcal(\Tsf_\Lambda)]=\updelta^! \big(\Pi_{j=1}^k [\Acal(\Tsf_{\Lambda_j})]\big)$. We obtain:
\begin{equation} \label{eqn: pushforward puncturing substack as diagonal pullback}
\upnu_\star[\Vcal(\Tsf_\Lambda)]^{\refined} = \upiota^! \updelta^! \big( \Pi_{j=1}^k [\Acal(\Tsf_{\Lambda_j})] \big) = \updelta^! \upiota^!\big( \Pi_{j=1}^k [\Acal(\Tsf_{\Lambda_j})] \big) = \updelta^! \big( \Pi_{j=1}^k [\Vcal(\Tsf_{\Lambda_j})]^{\refined} \big).
\end{equation}

\begin{proof}[Proof~of~\Cref{thm: naive punctured versus punctured}] Consider the diagram:
\[
\begin{tikzcd}
\Punct_\Lambda(\Acal^k | \partial \Acal^k) \ar[r,"\uptheta"] \ar[dd,"\upeta"] \ar[rdd,phantom,"\text{E}"] & \NPunct_\Lambda(\Acal^k|\partial \Acal^k) \ar[rr] \ar[dd,"\upzeta"] \ar[rd] & & \prod_{j=1}^k \Punct_{\Lambda_j}(\Acal|\Dcal) \ar[dd] \ar[rd] \\
& & \Mfrak \ar[rr,crossing over,"\Delta" {xshift=-15pt}] & & \Mfrak^k \ar[dd,"\upphi"] \\
\Vcal(\Tsf_\Lambda) \ar[r,"\upnu"] & \NVcal(\Tsf_\Lambda) \ar[rr] \ar[rd] & & \prod_{j=1}^k \Vcal(\Tsf_{\Lambda_j}) \ar[rd] \\
& & \Acal(\MTrop) \ar[from=uu, crossing over,"\uppsi" {xshift=0pt,yshift=15pt}] \ar[rr,"\updelta"] & & \Acal(\MTrop)^k.
\end{tikzcd}
\]
In the cube, the following faces are cartesian:
\begin{itemize}
\item \textbf{Top.} By the definition of $\NPunct_\Lambda(\Acal^k | \partial \Acal^k)$. 
\item \textbf{Right.} By \eqref{eqn: cartesian square punctured maps over prestable curves}.
\item \textbf{Bottom.} By B+D in \eqref{eqn: diagram all puncturing substacks}.
\end{itemize}
We conclude that the left face in the cube is also cartesian. The composition of the left face with E is cartesian by \eqref{eqn: cartesian square punctured maps over prestable curves}. It follows that E is cartesian. The vertical morphisms are smooth, giving
\[ \uptheta_\star[\Punct_\Lambda(\Acal^k | \partial \Acal^k)]^{\refined} = \uptheta_\star \upeta^\star [\Vcal(\Tsf_\Lambda)]^{\refined} = \upzeta^\star \upnu_\star [ \Vcal(\Tsf_\Lambda)]^{\refined} \]
and applying \eqref{eqn: pushforward puncturing substack as diagonal pullback} we obtain
\[ \uptheta_\star[\Punct_\Lambda(\Acal^k | \partial \Acal^k)]^{\refined} = \upzeta^\star \updelta^! \big( \Pi_{j=1}^k [\Vcal(\Tsf_{\Lambda_j})]^{\refined} \big).\]
Consider the front face of the cube. This is not cartesian, but each morphism is lci and hence induces a refined pullback on any overlying cartesian square \cite[Section~6.6]{FultonBig}. These pullbacks commute:
\[ \uppsi^! \circ \updelta^! = \Delta^! \circ \upphi^!.\]
The morphisms $\upphi$ and $\uppsi$ in particular are smooth, so their refined pullbacks coincide with flat pullbacks after base change \cite[Proposition~6.6(b)]{FultonBig}. We obtain
\begin{align*} \uptheta_\star[\Punct_\Lambda(\Acal^k | \partial \Acal^k)]^{\refined} & = \upzeta^\star \updelta^! \big( \Pi_{j=1}^k [\Vcal(\Tsf_{\Lambda_j})]^{\refined} \big) = \uppsi^! \updelta^! \big( \Pi_{j=1}^k [\Vcal(\Tsf_{\Lambda_j})]^{\refined} \big) \\
& = \Delta^! \upphi^! \big( \Pi_{j=1}^k [\Vcal(\Tsf_{\Lambda_j})]^{\refined} \big) = \Delta^! \big( \Pi_{j=1}^k [\Punct_{\Lambda_j}(\Acal|\Dcal)]^{\refined} \big) \\
& = [\NPunct_\Lambda(\Acal^k| \partial \Acal^k)]^{\refined}
\end{align*}
as required.
\end{proof}

\subsection{General targets} \label{sec: snc proof general targets} Sections~\ref{sec: snc proof universal space} and \ref{sec: snc proof punctured versus naive punctured} establish \Cref{thm: correspondence snc} for the universal target $(\Acal^k | \partial \Acal^k)$. We now deduce the result for the general target $(X|D)$.

\begin{proof}[Proof~of~\Cref{thm: correspondence snc}]
 There are fibre squares:
\[
\begin{tikzcd}
\Punct_\Lambda(X|D) \ar[r] \ar[d] \ar[rd,phantom,"\square"] & \Msf_{0,n,\upbeta}(X) \ar[d] & \Orb_\Lambda(X_{\bm{r}}) \ar[r] \ar[d] \ar[rd,phantom,"\square"] & \Msf_{0,n,\upbeta}(X) \ar[d] \\
\Punct_\Lambda(\Acal^k | \partial \Acal^k) \ar[r] & \Mfrak(\Acal^k), & \Orb_\Lambda(\Acal^k_{\bm{r}}) \ar[r] & \Mfrak(\Acal^k).
\end{tikzcd}
\]
In each case, the vertical morphisms carry compatible obstruction theories given by $(\Rder \uppi_\star f^\star T_{X|D})^\vee$ such that virtual pullback of the refined, respectively virtual, class gives the refined, respectively virtual, class (see \cite[Proposition~3.1.1]{AbramovichMarcusWise}, \cite[Section~4.2]{PuncturedMaps}, \cite[Section~4.2]{AbramovichCadmanWise}). We use analogous fibre squares to define the naive spaces
\[ \NPunct_\Lambda(X|D), \quad \NPunctOrb_\Lambda(X_{\bm{r}}|D_{\bm{r}}), \quad \NOrb_\Lambda(X_{\bm{r}}) \]
and their virtual classes. Chasing through cartesian towers results in a diagram
\[
\begin{tikzcd}[column sep=small]
\Punct_\Lambda(X|D) \ar[rd] \ar[dd] && \NPunctOrb_\Lambda(X_{\bm{r}}|D_{\bm{r}}) \ar[ld] \ar[dd] \ar[rd] && \Orb_\Lambda(X_{\bm{r}}) \ar[ld] \ar[dd] \\
& \NPunct_\Lambda(X|D) \ar[dd] && \NOrb_\Lambda(X_{\bm{r}}) \ar[dd] 	\\
\Punct_\Lambda(\Acal^k | \partial \Acal^k) \ar[rd] && \NPunctOrb_\Lambda(\Acal^k_{\bm{r}} | \partial \Acal^k_{\bm{r}}) \ar[ld] \ar[rd] && \Orb_\Lambda(\Acal^k_{\bm{r}}) \ar[ld] \\
& \NPunct_\Lambda(\Acal^k | \partial \Acal^k) && \NOrb_\Lambda(\Acal^k_{\bm{r}})	
\end{tikzcd}
\]
in which all square are cartesian, and all vertical arrows carry mutually compatible perfect obstruction theories. \Cref{thm: correspondence snc} for $(X|D)$ now follows from \Cref{thm: correspondence snc} for $(\Acal^k | \partial \Acal^k)$, and the commutativity of virtual pullback with proper pushforward.
\end{proof}

\section{Birational invariance} \label{sec: birational invariance}

\noindent We investigate birational invariance of the refined punctured theory. This is more subtle than in the positive tangency setting, see \Cref{sec: birational invariance introduction}. We first identify a system of iterated blowups which stabilises the virtual dimension (\Cref{sec: stabilisation of virtual dimension}) We then show via a counterexample that this alone is insufficient to guarantee birational invariance (\Cref{sec: birational counterexample}). We leave a comprehensive study of the behaviour of the refined punctured theory under birational modifications to future work.

\subsection{Stabilisation of virtual dimension} \label{sec: stabilisation of virtual dimension} The dimension of the puncturing substack $\Vcal(\Tsf)$ changes under birational modification, in a manner which is difficult to predict. In contrast, we show that the dimension of the refined virtual class $[\Vcal(\Tsf)]^{\refined}$ stabilises after an initial sequence of blowups.

\begin{proposition} \label{lem: blowup to get constant puncturing rank} Fix a pair $(X|D)$ and numerical data $\Lambda$.
\begin{enumerate}
\item There exists an iterated blowup $(X^\prime|D^\prime) \to (X|D)$ and numerical data $\Lambda^\prime$ pushing forward to $\Lambda$, such that the puncturing rank of each marking is either zero or one:
\[ k_i^\prime  = \begin{cases} 0 \qquad & \text{if $i \in O^\prime$,} \\ 1 \qquad & \text{if $i \in P^\prime$.} \end{cases} \]
Moreover the sets of ordinary markings and punctures are unchanged: $O^\prime=O$ and $P^\prime=P$.
\item The above system is stable under further blowups: given any iterated blowup $(X^{\prime\prime}|D^{\prime\prime}) \to (X^\prime|D^\prime)$ there is a lift of the numerical data with the same properties.
\end{enumerate}
For any member of the above system, the class $[\Vcal(\Tsf)]^{\refined}$ has pure dimension $-k_P^\prime = -|P|$.
\end{proposition}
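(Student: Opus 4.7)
The plan is to construct the required iterated blowup tropically by reducing each puncture's negative contact behaviour to a single new ray. For each puncture $p \in P$, decompose the contact vector as $\upalpha_p = \upalpha_p^+ - \upalpha_p^-$ with $\upalpha_p^{\pm} \in \N^k$ having disjoint supports, so $\upalpha_p^-$ is supported on $J^-(p) \colonequals \{j : \upalpha_{pj} < 0\}$. Since a puncture must map into the closed stratum $\bigcap_{j \in J(p)} D_j$, the cone $\upsigma_{J^-(p)}$ exists in $\Sigma$; let $\uprho_p \subseteq \upsigma_{J^-(p)}$ denote the ray spanned by the primitive integral vector in the direction of $\upalpha_p^-$. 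I would form a subdivision of $\Sigma$ containing every $\uprho_p$ for $p \in P$, and refine it to a smooth iterated stellar subdivision $\Sigma' \to \Sigma$ by standard toric geometry (see e.g.\ \cite[Chapter~1.7]{Oda}). This yields the iterated blowup $(X'|D') \to (X|D)$.

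Next, I would define the lifted numerical data $\Lambda'$ by specifying the contact orders at each marking. For a puncture $p$: the contact order along $\uprho_p$ is $-\gcd(\upalpha_p^-)$; along each persistent ray $e_j$ with $\upalpha_{pj} > 0$ it is $\upalpha_{pj}$; along every other ray of $\Sigma'$ it is zero. For an ordinary marking $i$, contact orders are preserved on persistent rays and set to zero on all new rays. The pushforward identity $\mathsf{r}_\star \upalpha_p' = \upalpha_p^+ - \upalpha_p^- = \upalpha_p$ is immediate. The lifted curve class $\upbeta'$ is then obtained by correcting the pullback $\uprho^\star \upbeta$ with classes of exceptional curves to enforce the global balancing constraint against the new divisors; this is a routine cohomological calculation. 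By construction each puncture has exactly one negative contact order (along $\uprho_p$) and no ordinary marking has any, giving $k_i' \in \{0,1\}$ with $O' = O$ and $P' = P$.

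For part (ii), the crucial observation is that iterated stellar subdivisions along closed strata preserve every existing ray. Given a further iterated blowup $(X''|D'') \to (X'|D')$, each ray of $\Sigma'$ persists in $\Sigma''$. I would lift $\Lambda'$ to $\Lambda''$ by copying contact orders along the persistent rays and assigning zero to every new ray; pushforward along $\mathsf{r}$ is then the identity on existing rays and zero on new ones, so $\Lambda''$ pushes forward to $\Lambda'$. The puncturing rank condition is manifestly inherited. Lifting the curve class is again a routine correction by exceptional classes.

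The final dimension assertion is a direct consequence of the construction of the refined virtual class. Since every puncture contributes $k_i' = 1$ and ordinary markings contribute zero, the total puncturing rank satisfies $k_P' = |P|$. Thus the closed embedding $\upiota\colon \Bcal \Gm^{|P|} \hookrightarrow \Acal^{|P|}$ appearing in Definition~\ref{def: refined virtual class} is a regular embedding of codimension $|P|$, and $\Acal(\Tsf)$ is log-smooth of dimension zero, so $[\Vcal(\Tsf)]^{\refined} = \upiota^! [\Acal(\Tsf)]$ has pure dimension $-|P|$. The only point requiring genuine care is verifying that the rays $\uprho_p$ fit inside $\Sigma$ (which is forced by the nonemptiness of the tangency stratum) and that the curve class can always be lifted consistently with the new balancing conditions; the remaining content of the proof is combinatorial bookkeeping on the cone complex.
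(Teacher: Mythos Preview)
Your lift prescriptions are in general invalid, and this is a genuine gap in both parts. The validity constraint (see the footnote to Construction~\ref{construction: faithful lift}) is that the rays $J'(i) = \{j : \upalpha'_{ij} \neq 0\}$ must span a cone of $\Sigma'$, i.e.\ $\bigcap_{j \in J'(i)} D'_j \neq \emptyset$. Your part (ii) lift (``copy on persistent rays, zero on new rays'') fails this whenever the further blowup is centred at a stratum $D'_J$ with $J \subseteq J'(i)$: after blowup the strict transforms of the $D'_j$ for $j \in J'(i)$ have empty intersection. Your part (i) lift for ordinary markings fails the same way if the subdivision touches $\upsigma_{J(i)}$. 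And your lift for a puncture $p$---contact $\upalpha_{pj}$ on $e_j$ for $j \in J^+(p)$, contact $-\gcd(\upalpha_p^-)$ on $\uprho_p$, zero elsewhere---requires $\{\uprho_p\} \cup \{e_j : j \in J^+(p)\}$ to span a cone of the final smooth $\Sigma'$; this fails once another puncture's ray $\uprho_q$, or a smoothing ray, lands in $\upsigma_{J^+(p)}$ and subdivides it (take e.g.\ $\upalpha_p = (-1,-1,1,1)$ and $\upalpha_q = (1,1,-1,-2)$, so that $\uprho_q = (0,0,1,2)$ cuts $\upsigma_{\{3,4\}}$). In each case a correct lift may still exist, but you have not supplied a mechanism to produce one.

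The paper's mechanism is the \emph{faithful lift} (Construction~\ref{construction: faithful lift}), defined for a single stratum blowup at $D_J$: choose $l \in J$ minimising the nonnegative contacts (Case~1) or maximising the negative contacts (Case~2), and set $\upalpha'_{i0} = \upalpha_{il}$, $\upalpha'_{ij} = \upalpha_{ij} - \upalpha_{il}$ for $j \in J$, unchanged outside $J$. This automatically has $\upalpha'_{il} = 0$ (hence is valid), preserves the partition $O \sqcup P$, never increases any $k_i$, and in Case~2 with $|J| \geq 2$ strictly decreases the \emph{puncturing multiplicity} $m_i = -\sum_{\upalpha_{ij}<0} \upalpha_{ij}$. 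Iterating with $J = J^-(p)$ for punctures with $k_p \geq 2$ forces $m_p$ down until $k_p = 1$, giving (i); and since the faithful lift handles any single stratum blowup and (one checks directly) preserves $k_i \in \{0,1\}$ once achieved, (ii) is immediate. Your closing dimension computation is correct.
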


We delay the proof for a preliminary discussion concerning the behaviour of numerical data under iterated blowups. Given an iterated blowup, there are typically multiple lifts of the numerical data (\Cref{rmk: contact data lift not unique}). We begin by identifying a \textbf{faithful lift}, which will be used in the proof of \Cref{lem: blowup to get constant puncturing rank}.

Given a nonempty $J \subseteq [k]$ denote the corresponding stratum $D_J=\cap_{j \in J} D_j$ and consider the associated stratum blowup $(X^\prime|D^\prime) \to (X|D)$. We write $D_0^\prime$ for the exceptional divisor and $D_1^\prime,\ldots,D_k^\prime$ for the strict transforms. There is a lattice map recording pullbacks of boundary divisors
\begin{align}
\nonumber \Z^{k} & \longleftarrow \Z^{k+1} \\
\label{eqn: lattice map blowup} a_j & \mapsto \begin{cases} a_j^\prime + a_0^\prime \quad & \text{if $j \in J$} \\ a_j^\prime \qquad\qquad & \text{if $j \in [k] \setminus J$} \end{cases}
\end{align}
where $(a_j)_{j=1}^k$ and $(a_j^\prime)_{j=0}^k$ are the standard coordinates.

\begin{construction}[Faithful lift of the numerical data] \label{construction: faithful lift} Fix a marking $i \in [n]$. We construct a \textbf{faithful lift} of its tangencies along the lattice map \eqref{eqn: lattice map blowup}:
\[ (\upalpha_{ij}^\prime)_{j=0}^k \mapsto (\upalpha_{ij})_{j=1}^k.\]
Such a lift\footnote{Setting $\upalpha_{i0}^\prime \colonequals 0$ and $\upalpha_{ij}^\prime \colonequals \upalpha_{ij}$ for $j \in [k]$ is typically invalid: in the blowup we have $\cap_{j \in J} D_j^\prime = \emptyset$ and hence require $\upalpha_{ij}^\prime = 0$ for some $j \in J$.} will preserve the sets of ordinary markings and punctures, not increase the puncturing rank of any marking, and decrease the puncturing rank in a controlled manner. These properties are not automatic: carelessly chosen lifts can increase the puncturing rank, and transform an ordinary marking into a puncture. To construct a faithful lift, we distinguish two cases.\smallskip

\noindent \emph{Case 1.} Suppose there exists $j \in J$ such that $\upalpha_{ij} \geqslant 0$. Choose $l \in J$ such that
\[ \upalpha_{il} = \min \{ \upalpha_{ij} : j \in J \text{ and } \upalpha_{ij} \geqslant 0\}.\]
Then define lifted contact orders as follows:
\begin{align} \label{eqn: lifted contact order}
\upalpha_{ij}^\prime \colonequals \begin{cases} \upalpha_{il} \qquad & \text{if $j=0$} \\ \upalpha_{ij} - \upalpha_{il} \qquad & \text{if $j \in J$} \\ \upalpha_{ij} \qquad & \text{if $j \in [k] \setminus J$}.	
 \end{cases}
\end{align}
We have $(\upalpha_{ij}^\prime)_{j=0}^k \mapsto (\upalpha_{ij})_{j=1}^k$ and the lift is valid since $\upalpha_{il}^\prime=0$. The puncturing rank of $i$ does not change. In particular, if $i$ was an ordinary marking then it remains an ordinary marking after lifting, and if $i$ was a puncture then it remains a puncture after lifting. Note that the lift does not depend on the choice of $l$. This completes Case 1.\smallskip

\noindent \emph{Case 2.} It remains to consider the case $\upalpha_{ij} < 0$ for all $j \in J$. In this case we choose $l \in J$ such that
\[ \upalpha_{il} = \max \{ \upalpha_{ij} : j \in J \} \]
and then define lifted contact orders exactly as in \eqref{eqn: lifted contact order}. As in the previous case, this is a valid lift of the contact orders and does not depend on the choice of $l$. Unlike in the previous case, the puncturing rank can fall. Precisely, we have
\[ k_i^\prime = k_i + 1 - \left| \{ j \in J : \upalpha_{ij} = \upalpha_{il} \} \right|. \]
The final term is at least $1$ since $l \in J$. We conclude that $k_i^\prime \leqslant k_i$. On the other hand, $\upalpha_{i0}^\prime = \upalpha_{il} < 0$ implies $k_i^\prime \geqslant 1$. Summarising, we have
\[ 1 \leqslant k_i^\prime \leqslant k_i.\]
In particular $i$ remains a puncture after lifting, and if $k_i=1$ then $k_i^\prime=1$. While the puncturing rank is not forced to decrease, there is a finer quantity which must. We define the \textbf{puncturing multiplicity} $m_i$ to be the sum of the absolute values of the negative contact orders:
\[ m_i \colonequals -\sum_{\substack{j \in [k] \\ \upalpha_{ij} < 0}} \upalpha_{ij} \in \N.\]
It follows from \eqref{eqn: lifted contact order} that after lifting the contact orders we have
\begin{equation} \label{eqn: puncturing multiplicity change} m_i^\prime = m_i + (|J|-1)\upalpha_{il}. \end{equation}
Since $\upalpha_{il}<0$ we see that $|J| \geqslant 2$ implies $m_i^\prime < m_i$. This completes Case 2.

This lifts the contact orders. The curve class $\upbeta^\prime$ is determined by the global balancing equation
\[ D_0^\prime \cdot \upbeta^\prime = \Sigma_{i=0}^n \upalpha_{i0}^\prime \]
and the fact that $\upbeta^\prime \mapsto \upbeta$. We thus obtain the faithful lift $\Lambda^\prime$ of the numerical data $\Lambda$. \qed
\end{construction}

\begin{proof}[Proof~of~\Cref{lem: blowup to get constant puncturing rank}] 
We proceed inductively. At each step we select a puncture $p \in P$ with $k_p \geqslant 2$ and take
\[ J \colonequals \{ j \in [k] : \upalpha_{pj} < 0 \}.\]
We perform the associated stratum blowup and take the faithful lift of the tangencies. This preserves the sets of ordinary markings and punctures, and does not increase any puncturing rank. The chosen puncture $p$ belongs to Case 2 above, and $|J|=k_p \geqslant 2$ combined with \eqref{eqn: puncturing multiplicity change} ensures that the puncturing multiplicity decreases.

Repeating, we eventually obtain $|J^\prime| = k_p^\prime=1$ since the puncturing multiplicity cannot decrease indefinitely. Repeating for all punctures, we arrive at an iterated blowup with lifted numerical data, establishing (i). Given any further blowup, we then choose the faithful lift of the contact orders: this ensures that the desired properties continue to hold, establishing (ii).
\end{proof}

\subsection{Counterexample} \label{sec: birational counterexample}
Following \Cref{lem: blowup to get constant puncturing rank}, while initial blowups can change the puncturing rank and hence the virtual dimension of the refined punctured class, these quantities eventually stabilise as long as we lift the numerical data carefully. In this stable range, at least, we might hope for birational invariance.

However, this is too optimistic: we now present an example in the stable range which does not satisfy birational invariance. 

\begin{example} Let $X=\mathbb{F}_1$, $D_1$ a fibre of the bundle projection, and $D_2$ the $(-1)$-curve. Take $\upbeta=2D_2$ so that $D_1 \cdot \upbeta = 2$ and $D_2 \cdot \upbeta = -2$, and take $n=2$ with contact orders $\upalpha_1=\upalpha_2=(1,-1)$. This defines numerical data for stable punctured maps to $(X|D_1+D_2)$. The puncturing rank of each puncture is $1$, as in \Cref{lem: blowup to get constant puncturing rank}.

Fix a tropical type arising from a stable punctured map to $(X|D_1+D_2)$. The curve class must be entirely concentrated on the vertices mapping to $D_2$. This reduces to a small number of tropical types, and as in \Cref{example: higher rank regular embedding} we replace $\Tsf$ by the subcomplex spanned by these types. The resulting finite cone complex is illustrated in \Cref{fig: T}.
\begin{figure}
\begin{tikzpicture}[scale=1.3]

\draw[fill=gray,gray] (0,0) circle[radius=1.5pt];
\draw[gray,thick,->] (0,0) -- (4,0);
\draw[gray] (4,0) node[right]{$Z_3$};
\draw[gray,thick,->] (0,0) -- (0,4);
\draw[gray] (0,4) node[above]{$Z_1$};
\draw[gray,thick,->] (0,0) -- (-4,0);
\draw[gray] (-4,0) node[left]{$Z_2$};

\draw[black,fill=black] (-0.5,4.75) circle[radius=1pt];
\draw[black,->] (-0.5,4.75) -- (0.5,4.75);
\draw (0.4,4.75) node[right]{\tiny$D_1$};
\draw[black,->] (-0.5,4.75) -- (-0.5,5.75);
\draw (-0.5,5.71) node[above]{\tiny$D_2$};
\draw[blue,fill=blue] (-0.5,5.4) circle[radius=1pt];
\draw[blue] (-0.43,5.4) node[left]{\tiny$2D_2$};
\draw[blue,-stealth] (-0.5,5.435) -- (0.185,4.75);
\draw[blue,-stealth] (-0.5,5.365) -- (0.115,4.75);

\draw[black,fill=black] (-3.2,2.45) circle[radius=1pt];
\draw[black,->] (-3.2,2.45) -- (-2.2,2.45);
\draw (-2.3,2.45) node[right]{\tiny$D_1$};
\draw[black,->] (-3.2,2.45) -- (-3.2,3.45);
\draw (-3.2,3.41) node[above]{\tiny$D_2$};
\draw[blue,fill=blue] (-3.2,3.1) circle[radius=1pt];
\draw[blue,fill=blue] (-2.8,2.7) circle[radius=1pt];
\draw[blue] (-3.2,3.1) -- (-2.8,2.7);
\draw[blue] (-2.9,3) node{\tiny$2$};
\draw[blue] (-3.13,3.1) node[left]{\tiny$2D_2$};
\draw[blue,-stealth] (-2.8,2.735) -- (-2.515,2.45);
\draw[blue,-stealth] (-2.8,2.665) -- (-2.585,2.45);

\draw[black,fill=black] (-5.8,-0.25) circle[radius=1pt];
\draw[black,->] (-5.8,-0.25) -- (-4.8,-0.25);
\draw (-4.9,-0.25) node[right]{\tiny$D_1$};
\draw[black,->] (-5.8,-0.25) -- (-5.8,0.75);
\draw (-5.8,0.71) node[above]{\tiny$D_2$};
\draw[blue,fill=blue] (-5.8,0.4) circle[radius=1pt];
\draw[blue,fill=blue] (-5.15,-0.25) circle[radius=1pt];
\draw[blue] (-5.8,0.4) -- (-5.15,-0.25);
\draw[blue] (-5.4,0.2) node{\tiny$2$};
\draw[blue] (-5.73,0.4) node[left]{\tiny$2D_2$};
\draw[blue,-stealth] (-5.15,-0.215) -- (-4.95,-0.415);
\draw[blue,-stealth] (-5.15,-0.285) -- (-4.95,-0.485);

\draw[black,fill=black] (1.8,2.45) circle[radius=1pt];
\draw[black,->] (1.8,2.45) -- (2.8,2.45);
\draw (2.7,2.45) node[right]{\tiny$D_1$};
\draw[black,->] (1.8,2.45) -- (1.8,3.45);
\draw (1.8,3.41) node[above]{\tiny$D_2$};
\draw[blue,fill=blue] (1.8,3.1) circle[radius=1pt];
\draw[blue,-stealth] (1.8,3.135) -- (2.4855,2.45);
\draw[blue,-stealth] (1.8,3.065) -- (2.415,2.45);
\draw[blue,fill=blue] (1.5,3.3) circle[radius=1pt];
\draw[blue] (1.55,3.3) node[left]{\tiny$D_2$};
\draw[blue] (1.554,2.9) node[left]{\tiny$D_2$};
\draw[blue,fill=blue] (1.5,2.9) circle[radius=1pt];
\draw[blue] (1.5,3.3) -- (1.5,2.9);
\draw[blue,-stealth] (1.5,3.3) -- (1.7,3.3);
\draw[blue,-stealth] (1.5,2.9) -- (1.7,2.9);
\draw[blue,densely dashed] (1.3,3.1) circle[radius=9pt];
\draw[blue,densely dashed] (1.5,2.85) -- (1.8,3.1);
\draw[blue,densely dashed] (1.5,3.35) -- (1.8,3.1);

\draw[black,fill=black] (5.4,-0.25) circle[radius=1pt];
\draw[black,->] (5.4,-0.25) -- (6.4,-0.25);
\draw (6.3,-0.25) node[right]{\tiny$D_1$};
\draw[black,->] (5.4,-0.25) -- (5.4,0.75);
\draw (5.4,0.71) node[above]{\tiny$D_2$};
\draw[blue,fill=blue] (5.4,-0.25) circle[radius=1pt];
\draw[blue,-stealth] (5.4,-0.285) -- (5.6,-0.485);
\draw[blue,-stealth] (5.4,-0.215) -- (5.6,-0.415);
\draw[blue,fill=blue] (5.1,-0.05) circle[radius=1pt];
\draw[blue] (5.15,-0.05) node[left]{\tiny$D_2$};
\draw[blue] (5.154,-0.45) node[left]{\tiny$D_2$};
\draw[blue,fill=blue] (5.1,-0.45) circle[radius=1pt];
\draw[blue] (5.1,-0.05) -- (5.1,-0.45);
\draw[blue,-stealth] (5.1,-0.05) -- (5.3,-0.05);
\draw[blue,-stealth] (5.1,-0.45) -- (5.3,-0.45);
\draw[blue,densely dashed] (4.9,-0.25) circle[radius=9pt];
\draw[blue,densely dashed] (5.1,-0.5) -- (5.4,-0.25);
\draw[blue,densely dashed] (5.1,0) -- (5.4,-0.25);
\end{tikzpicture}
\caption{The cone complex $\Tsf$.}	
\label{fig: T}
\end{figure}

The total puncturing rank is $k_P=2$ and the two puncturing offsets are identical, each giving the divisor $Z_1$. The puncturing substack is $\Vcal(\Tsf) = Z_1 \cap Z_1 = Z_1$ and we conclude (see \Cref{sec: regularly embedded class}):
\begin{equation} \label{eqn: birational invariance counterexample refined class 1} [\Vcal(\Tsf)]^{\refined} = \left\{ \dfrac{(1+Z_1)^2}{1+Z_1} \right\}^1 \cap Z_1 = Z_1^2.\end{equation}
Now blowup the target in $D_1 \cap D_2$ to obtain a pair $(X^\prime|D^\prime)$. Here $D^\prime=D_0^\prime+D_1^\prime+D_2^\prime$ where $D_0^\prime$ is the exceptional divisor and $D_1^\prime,D_2^\prime$ are the strict transforms. The faithful lift of the tangencies given by \Cref{construction: faithful lift} is:
\[ (\upalpha_{ij}^\prime)_{j=0}^2 \colonequals (1,0,-2) \qquad \text{for $i=1,2$.}\]
The lifted curve class is $\upbeta^\prime = 2D_2^\prime$. The associated cone stack $\Tsf^\prime$ is the modification of $\Tsf$ illustrated in \Cref{fig: Tprime}. Geometrically, the logarithmic modification $\Acal(\Tsf^\prime) \to \Acal(\Tsf)$ is the blowup in $Z_1 \cap Z_2$. The cone spanned by $Z_0^\prime$ and $Z_2^\prime$ is redundant since the corresponding stratum does not intersect the puncturing substack, but  we may retain it without affecting the refined class.
\begin{figure}
\begin{tikzpicture}[scale=1.3]

\draw[fill=gray,gray] (0,0) circle[radius=1.5pt];
\draw[gray,thick,->] (0,0) -- (4,0);
\draw[gray] (4,0) node[right]{$Z_3^\prime$};
\draw[gray,thick,->] (0,0) -- (0,4);
\draw[gray] (0,4) node[above]{$Z_1^\prime$};
\draw[gray,thick,->] (0,0) -- (-4,0);
\draw[gray] (-4,0) node[left]{$Z_2^\prime$};
\draw[gray,thick,->] (0,0) -- (-3.5,3.5);
\draw[gray] (-3.7,3.7) node{$Z_0^\prime$};

\draw[black,fill=black] (-0.5,4.75) circle[radius=1pt];
\draw[black,->] (-0.5,4.75) -- (0.5,4.75);
\draw (0.4,4.75) node[right]{\tiny$D_1^\prime$};
\draw[black,->] (-0.5,4.75) -- (-0.5,5.75);
\draw (-0.5,5.71) node[above]{\tiny$D_2^\prime$};
\draw[black,->] (-0.5,4.75) -- (0.3,5.55);
\draw (0.475,5.6) node{\tiny$D_0^\prime$};
\draw[blue,fill=blue] (-0.5,5.4) circle[radius=1pt];
\draw[blue] (-0.43,5.4) node[left]{\tiny$2D_2^\prime$};
\draw[blue,-stealth] (-0.5,5.435) -- (-0.1575,5.0925);
\draw[blue,-stealth] (-0.5,5.365) -- (-0.1925,5.0575);


\draw[black,fill=black] (-3.8,1.45) circle[radius=1pt];
\draw[black,->] (-3.8,1.45) -- (-2.8,1.45);
\draw (-2.9,1.45) node[right]{\tiny$D_1^\prime$};
\draw[black,->] (-3.8,1.45) -- (-3.8,2.45);
\draw (-3.8,2.41) node[above]{\tiny$D_2^\prime$};
\draw[black,->] (-3.8,1.45) -- (-3,2.25);
\draw (-3.1,2.3) node[right]{\tiny$D_0^\prime$};
\draw[blue,fill=blue] (-3.8,2.1) circle[radius=1pt];
\draw[blue,fill=blue] (-3.475,1.775) circle[radius=1pt];
\draw[blue,fill=blue] (-3.3,1.6) circle[radius=1pt];
\draw[blue] (-3.8,2.1) -- (-3.3,1.6);
\draw[blue] (-3.73,2.1) node[left]{\tiny$2D_2^\prime$};
\draw[blue,-stealth] (-3.3,1.635) -- (-3.115,1.45);
\draw[blue,-stealth] (-3.3,1.565) -- (-3.185,1.45);

\draw[black,fill=black] (-2.1,3.15) circle[radius=1pt];
\draw[black,->] (-2.1,3.15) -- (-1.1,3.15);
\draw (-1.2,3.15) node[right]{\tiny$D_1^\prime$};
\draw[black,->] (-2.1,3.15) -- (-2.1,4.15);
\draw (-2.1,4.11) node[above]{\tiny$D_2^\prime$};
\draw[black,->] (-2.1,3.15) -- (-1.3,3.95);
\draw (-1.4,4) node[right]{\tiny$D_0^\prime$};
\draw[blue,fill=blue] (-2.1,3.8) circle[radius=1pt];
\draw[blue,fill=blue] (-1.9,3.6) circle[radius=1pt];
\draw[blue] (-2.1,3.8) -- (-1.9,3.6);
\draw[blue] (-2.03,3.8) node[left]{\tiny$2D_2^\prime$};
\draw[blue,-stealth] (-1.9,3.635) -- (-1.7575,3.4925);
\draw[blue,-stealth] (-1.9,3.565) -- (-1.7925,3.4575);

\draw[black,fill=black] (-4.3,4.15) circle[radius=1pt];
\draw[black,->] (-4.3,4.15) -- (-3.3,4.15);
\draw (-3.4,4.15) node[right]{\tiny$D_1^\prime$};
\draw[black,->] (-4.3,4.15) -- (-4.3,5.15);
\draw (-4.3,5.11) node[above]{\tiny$D_2^\prime$};
\draw[black,->] (-4.3,4.15) -- (-3.5,4.95);
\draw (-3.6,5) node[right]{\tiny$D_0^\prime$};
\draw[blue,fill=blue] (-4.3,4.8) circle[radius=1pt];
\draw[blue,fill=blue] (-3.975,4.475) circle[radius=1pt];
\draw[blue] (-4.3,4.8) -- (-3.975,4.475);
\draw[blue] (-4.23,4.8) node[left]{\tiny$2D_2^\prime$};
\draw[blue,-stealth] (-3.975,4.515) -- (-3.84,4.38);
\draw[blue,-stealth] (-3.975,4.435) -- (-3.875,4.335);

\draw[black,fill=black] (-5.8,-0.25) circle[radius=1pt];
\draw[black,->] (-5.8,-0.25) -- (-4.8,-0.25);
\draw (-4.9,-0.25) node[right]{\tiny$D_1^\prime$};
\draw[black,->] (-5.8,-0.25) -- (-5.8,0.75);
\draw (-5.8,0.71) node[above]{\tiny$D_2^\prime$};
\draw[black,->] (-5.8,-0.25) -- (-5,0.55);
\draw (-5.1,0.6) node[right]{\tiny$D_0^\prime$};
\draw[blue,fill=blue] (-5.8,0.4) circle[radius=1pt];
\draw[blue,fill=blue] (-5.475,0.075) circle[radius=1pt];
\draw[blue,fill=blue] (-5.15,-0.25) circle[radius=1pt];
\draw[blue] (-5.8,0.4) -- (-5.15,-0.25);
\draw[blue] (-5.73,0.4) node[left]{\tiny$2D_2^\prime$};
\draw[blue,-stealth] (-5.15,-0.215) -- (-4.95,-0.415);
\draw[blue,-stealth] (-5.15,-0.285) -- (-4.95,-0.485);

\draw[black,fill=black] (1.8,2.45) circle[radius=1pt];
\draw[black,->] (1.8,2.45) -- (2.8,2.45);
\draw (2.7,2.45) node[right]{\tiny$D_1^\prime$};
\draw[black,->] (1.8,2.45) -- (1.8,3.45);
\draw (1.8,3.41) node[above]{\tiny$D_2^\prime$};
\draw[black,->] (1.8,2.45) -- (2.6,3.25);
\draw (2.75,3.3) node{\tiny$D_0^\prime$};
\draw[blue,fill=blue] (1.8,3.1) circle[radius=1pt];
\draw[blue,-stealth] (1.8,3.135) -- (2.14275,2.79275);
\draw[blue,-stealth] (1.8,3.065) -- (2.1075,2.7575);
\draw[blue,fill=blue] (1.5,3.3) circle[radius=1pt];
\draw[blue] (1.55,3.3) node[left]{\tiny$D_2^\prime$};
\draw[blue] (1.554,2.9) node[left]{\tiny$D_2^\prime$};
\draw[blue,fill=blue] (1.5,2.9) circle[radius=1pt];
\draw[blue] (1.5,3.3) -- (1.5,2.9);
\draw[blue,-stealth] (1.5,3.3) -- (1.7,3.3);
\draw[blue,-stealth] (1.5,2.9) -- (1.7,2.9);
\draw[blue,densely dashed] (1.3,3.1) circle[radius=9.5 4pt];
\draw[blue,densely dashed] (1.5,2.85) -- (1.8,3.1);
\draw[blue,densely dashed] (1.5,3.35) -- (1.8,3.1);

\draw[black,fill=black] (5.4,-0.25) circle[radius=1pt];
\draw[black,->] (5.4,-0.25) -- (6.4,-0.25);
\draw (6.3,-0.25) node[right]{\tiny$D_1^\prime$};
\draw[black,->] (5.4,-0.25) -- (5.4,0.75);
\draw (5.4,0.71) node[above]{\tiny$D_2^\prime$};
\draw[black,->] (5.4,-0.25) -- (6.2,0.55);
\draw (6.375,0.6) node{\tiny$D_0^\prime$};
\draw[blue,fill=blue] (5.4,-0.25) circle[radius=1pt];
\draw[blue,-stealth] (5.4,-0.285) -- (5.6,-0.485);
\draw[blue,-stealth] (5.4,-0.215) -- (5.6,-0.415);
\draw[blue,fill=blue] (5.1,-0.05) circle[radius=1pt];
\draw[blue] (5.15,-0.05) node[left]{\tiny$D_2^\prime$};
\draw[blue] (5.154,-0.45) node[left]{\tiny$D_2^\prime$};
\draw[blue,fill=blue] (5.1,-0.45) circle[radius=1pt];
\draw[blue] (5.1,-0.05) -- (5.1,-0.45);
\draw[blue,-stealth] (5.1,-0.05) -- (5.3,-0.05);
\draw[blue,-stealth] (5.1,-0.45) -- (5.3,-0.45);
\draw[blue,densely dashed] (4.9,-0.25) circle[radius=9.5pt];
\draw[blue,densely dashed] (5.1,-0.5) -- (5.4,-0.25);
\draw[blue,densely dashed] (5.1,0) -- (5.4,-0.25);

\end{tikzpicture}
\caption{The cone complex $\Tsf^\prime$.}
\label{fig: Tprime}
\end{figure}

Again the two puncturing offsets are identical, each giving the divisor $Z_1^\prime$. We see that $\Vcal(\Tsf^\prime)=Z_1^\prime \cap Z_1^\prime = Z_1^\prime$ is the strict transform of $\Vcal(\Tsf)=Z_1$ (note that this simple relationship between puncturing substacks does not hold for other lifts of the numerical data). The refined class is:
\[ [\Vcal(\Tsf^\prime)]^{\refined} = \left\{ \dfrac{(1+Z_1^\prime)^2}{1+Z_1^\prime} \right\}^1 \cap Z_1^\prime = (Z_1^\prime)^2.\]
Consider the morphism $\upnu \colon \Acal(\Tsf^\prime) \to \Acal(\Tsf)$. We have $\upnu^\star Z_1 = Z_1^\prime + Z_0^\prime$ from which we obtain
\[ [\Vcal(\Tsf^\prime)]^{\refined} = (\upnu^\star Z_1 - Z_0^\prime)^2 = \upnu^\star(Z_1^2) - 2 \upnu^\star(Z_1) Z_0^\prime + (Z_0^\prime)^2.\]
We then compute:
\[ \upnu_\star [\Vcal(\Tsf^\prime)]^{\refined} = Z_1^2 - Z_1 Z_2.\]
Comparing with \eqref{eqn: birational invariance counterexample refined class 1} we see that $\upnu_\star[\Vcal(\Tsf^\prime)]^\refined \neq [\Vcal(\Tsf)]^{\refined}$. Similar calculations show that in fact no choice of lifted numerical data achieves birational invariance in this case.
\end{example}

\footnotesize
\bibliographystyle{alpha}
\bibliography{Bibliography.bib}

\noindent Luca Battistella, University of Bologna, \href{mailto:luca.battistella2@unibo.it}{luca.battistella2@unibo.it}.

\noindent Navid Nabijou, Queen Mary University of London, \href{mailto:n.nabijou@qmul.ac.uk}{n.nabijou@qmul.ac.uk}.

\noindent Dhruv Ranganathan, University of Cambridge, \href{mailto:dr508@cam.ac.uk}{dr508@cam.ac.uk}.\smallskip

\noindent \textbf{Funding.} L.B. was supported by: Deutsche Forschungsgemeinschaft (DFG, German Research Foundation) under Germany's Excellence Strategy EXC 2181/1-390900948 (the Heidelberg STRUCTURES Excellence Cluster); ERC Advanced Grant SYZYGY of the European Research Council (ERC) under the European Union Horizon 2020 Research and Innovation Program (grant agreement No. 834172); the European Union - NextGenerationEU under the National Recovery and Resilience Plan (PNRR) - Mission 4 Education and Research - Component 2 From Research to Business - Investment 1.1 Notice Prin 2022 - DD N.104 del 2/2/2022, from title "Symplectic varieties: their interplay with Fano manifolds and derived categories", proposal code 2022PEKYBJ – CUP J53D23003840006. L.B. is a member of INdAM group GNSAGA. N.N. was supported by the Herchel~Smith~Fund. D.R. was supported by EPSRC New Investigator Grant EP/V051830/1. 

\end{document}